	\DeclareMathOperator*{\argmax}{arg\,max} 
\newsavebox{\fmbox}
\def \Sup{\displaystyle\sup}
\def \Max{\displaystyle\max}
\def \Lim{\displaystyle\lim}
\def \Sup{\displaystyle\sup}
\def \Max{\displaystyle\max}
\def\lbr{[\![}
\def \Lim{\displaystyle\lim}
\def \Sup{\displaystyle\sup}
\def \Max{\displaystyle\max}
\newlength{\oldparindent}
\newcommand{\pf}{\hfill$\bullet$}
\title{ The value of the information in the Moral Hazard setting}
\author{Ishak Hajjej\thanks{\small  CREST $\&$  IUT,  LMBA, France, email \href{mailto: ishak.hajjej@ensae.fr}{ ishak.hajjej@ensae.fr}  }, Caroline Hillairet\thanks{\small CREST, ENSAE Paris, France, email \href{mailto: Caroline.Hillairet@ensae.fr}{ Caroline.Hillairet@ensae.fr}},
Mohamed Mnif\thanks{\small  ENIT, LAMSIN, University of Tunis El Manar, Tunis, Tunisia, email \href{mailto:mohamed.mnif@enit.utm.tn}{mohamed.mnif@enit.utm.tn}   } }
\newtheorem{definition}{Definition }[section]
\newtheorem{proposition}[definition]
{Proposition }
{DÃ©finition}
\newtheorem{lemme}[definition]%
{Lemma }
{Lemme }
\newtheorem{theoreme}[definition]%
{Theorem }
{ThÃ©orÃšme }
{Corollary }
\newtheorem{remarque}[definition]%
{Remark }
{Remarque }
{Corollaire}
\newtheorem{hypothese}[definition]%
{Assumption}
{Condition}
\newtheorem{ex}[definition]%
{Example}
\newcommand{\esp}{\mathbb{E}} 
\newcommand{\un}{{\mathbf{1}}} 
\newcommand{\pr}{\mathbb{P}}
\newcommand{\ff}{\mathbb{F}}
\newcommand{\R}{\mathbb{R}}
\newcommand{\N}{\mathbb{N}}
\newcommand{\F}{\mathcal{F}}
\numberwithin{equation}{section}
\begin{document}

\maketitle

\vspace*{-1cm}
\section*{Abstract}
This article studies the problem of evaluating the information that a Principal lacks when establishing an
incentive contract with an Agent whose effort is not observable.
The Principal ("she") pays a continuous rent to the Agent ("he"), while the latter gives a best response characterized by his effort, until a terminal date decided by the Principal when she stops the contract and gives compensation to the Agent. 
The output process of the project  is a diffusion process driven by a Brownian motion whose drift is impacted by the Agent's effort.
The first part of the paper investigates  the  optimal stochastic control problem when the Principal and the Agent share the same information. This situation,  known as the first-best case,
 is solved by tackling the Lagrangian problem.
In the second part, the Principal observes the output process but she may not observe the drift and the Brownian motion separately. This situation is known as the second-best case.
We derive the best response of the Agent, then
we solve the mixed optimal stopping/stochastic control problem of the Principal under a fixed probability and on the  filtration  generated by the Brownian motion, which is  larger than the one generated by the output process (that corresponds to the   information available for the Principal).
Under some regularity conditions, the Principal value function is characterized by solving the associated Hamilton Jacobi Bellman Variational Inequality. At the optimum, we prove that the two filtrations coincide.
Finally, we compute the value of the information for the Principal provided by the observation of the Agent's effort. It is defined as the difference between
the principal value function in the first-best and second-best cases.

{\it Keywords}: Moral Hazard, Principal-Agent, stochastic control, optimal stopping, Hamilton Jacobi Bellman Variational Inequality, first-best case, second-best case .

{\it MSC Classification} : 60G40, 91B40, 91B70, 93E40.

{\it Funding}: This research is supported by a grant of the French National Research Agency (ANR), ''Investissements d'Avenir'' (LabEx Ecodec/ANR-11-LABX-0047).

\section{Introduction}

A Principal-Agent problem concerns two individuals: a Principal (she) and an Agent (he). The Principal proposes a contract, which the Agent is free to accept or refuse,  but neither part can change their mind. The Principal seeks to achieve two objectives. First, she wants the Agent to accept the contract. Second, the Principal wishes to extract the maximum profit from the contract. In game theory, such a situation forms what is called a Stackelberg game where the Principal is the leader  and the Agent is the follower. There are many applications of the Principal-Agent problem, notably in finance (such as El Euch et al. \cite{euch2021optimal} for market regulation), portfolio management (see Cvitanic et al. \cite{cvitanic2017moral}) and in the electricity markets (cf. Alasseur et al. \cite{alasseur2017adverse} and more recently Aïd et al. \cite{aid2022optimal}).  Numerous situations in the economic literature lead to Principal Agent's formulation, such as  Public-Private Partnership (PPP) contracts.  Auriol and Picard \cite{auriol2013theory} discussed the appropriateness of PPP contracts when the public entity and the private operator do not share the same information about the cost parameter during the project life.  Hajjej et al. \cite{hajjej2017optimal} derived the optimal perpetual contract  using techniques of stochastic control under partial information. Hajjej et al.  \cite{hajjej2019optimal} proposed a similar modeling with a continuous payment in random horizon, by adding the possibility of stopping the contract at a random time, decided by the public.\\
In the literature, mainly  three types of contracts are considered,  according to the level of information.
\begin{itemize}
\item The first type is commonly called the Risk-Sharing case or first-best case, in which the Principal and the Agent have access to exactly the same information.
This case  was studied, among others,  by  Borch \cite{borch1992equilibrium},  Arrow \cite{arrow1964role} and  Wilson \cite{wilson1968theory}.  
In the exponential utility for both the Principal and the Agent, 
Muller \cite{muller1997first}  showed that the optimal contract is a linear function of the output process terminal value. Cadenillas et al.  \cite{cadenillas2007optimal} considered the case where the agent can control both the drift and the volatility of the output process.  This work was followed by that of Cvitanic et al.  \cite{cvitanic2004first} by using the maximum principle.
\item The second type is commonly called Moral Hazard or second-best case.  This situation corresponds  to the case where the Principal is unable to observe the action of the agent and she only observes the output process.  The first paper on Principal Agent problems in continuous-time is the one of Holmstrom
and Milgrom \cite{holmstrom1987aggregation}.  They considered a Brownian setting in which the agent controls the drift of the output process, and receives a lumpsum payment at the end of the contract, that is a finite time horizon.   Schattler and Sung \cite{schattler1993first} generalized these results using dynamic programming and martingale methods. Cvitanic et al. \cite{auriol2013theory},  \cite{bellman1957dynamic}   considered a general formulation in which the agent's efforts impact both the drift and the volatility of the output process, using second-order BSDE in a non Markovian stochastic control setting.
\item The third type is commonly called Adverse Selection. This situation corresponds to the case where the principal does not fully know the characteristics of the agent.  This type of problem has been studied by Sung \cite{sung2005optimal} then by Cvitanic and Zhang \cite{cvitanic2012contract}  and Carlier et al. \cite{carlier2007optimal}.
\end{itemize}
Compared to the first-best case,  the second-best case corresponds to a Stackelberg  game between the Principal and the Agent: the Principal proposes a contract,  the Agent gives the best response characterized by his effort.  Then, taking into account the optimal efforts, the Principal calculates the optimal contract that maximizes her utility. Some papers  derived optimal Principal-Agent contracts in both situations of first-best and second-best,   in the case where the Agent receives  a terminal lumpsum payment and not a continuous rent.
Mastrolia and Possamai \cite{mastrolia2018moral} treated a Principal-Agent problem in the case where the Agent controls the drift of the output and with uncertainty on the volatility.  In the first-best case, they showed that the optimal contracts are in a class of contracts linear with respect to the output and its quadratic variation. For the second-best case, they used Backward Stochastic differential equations (BSDE) to obtain a probabilistic representation of the agent value function.  This representation characterizes the optimal action chosen by the agent.  Aïd et al. \cite{aid2022optimal}  applied the Principal Agent problem in electricity markets. They showed that in the first-best case,  the price of energy is a convex combination of the marginal value of energy for the consumer and the marginal cost for the producer.  In the second-best case, the price of energy is non-constant and non-linear and is a decreasing function of time which induces more effort at the beginning of the period than at the end.  In the setting of  exponential utility for  the Principal and the Agent,  Muller \cite{muller1997first} showed that the optimal contract in the first-best  case is linear in the final value of the output as in the  second-best case. \\

The Principal faces a lack of information in the second-best case since she can not observe the effort of the Agent.
Quantifying the value of an additional information has been investigated by  some authors.   Amendinger et al. \cite{amendinger2003monetary} considered a risk averse investor who maximizes his expected utility from terminal wealth with  decisions based on the available information flow. This investor faces the opportunity to acquire some additional initial information. The value of this information is defined as the amount of money that he can pay for the additional information such that the investor is indifferent in the sense that his expected utility under optimal control is unchanged between paying nothing and not having the additional information and paying something and having the additional information. In other words, the value of the information  is balanced out by the informational advantage in terms of maximal expected utility. In our case, the information is related to the knowledge of the effort of the Agent.
The value of the information could be defined by the monetary amount that the Principal accepts to receive to compensate the lack of information. As the Principal is risk neutral, i.e. her utility function is  linear, the monetary value of the information is then defined as the difference between the value function of the Principal in the first-best case and in the second-best case.\\

This paper considers a contract in which a risk neutral Principal proposes a continuous payment to a risk averse Agent until a random horizon decided by her to stop the contract. The Agent has the choice between accepting the contract and making an effort or refusing the contract if its reservation constraint  is not satisfied. This work is related to the literature on dynamic contracting problems in continuous time. 
The first part of the paper is dedicated to the first-best case in which  the Principal and the Agent have the same information: the two parts share the risk between themselves.
The situation leads to a stochastic control problem with optimal stopping for a single individual "the Principal", who chooses both the rent and the efforts. The second part of the paper is dedicated to  the second-best case.
In particular, the Principal may not observe the effort of the Agent but only its impact on output process. In the literature, this problem is usually tackled using a weak approach :  the output process follows a Brownian motion under a reference probability measure and the Agent changes the distribution of the output process by changing its drift; this induces a new probability measure,  depending  on the effort of the Agent,  under which  the problem is studied. For a given contract,
the Agent optimizes his criterion on this  probability  measure, leading to  the best response (effort) of the Agent.
Then the Principal solves her problem under the law induced by the best response of the Agent, this is a standard mixed optimal stopping/stochastic control problem.
Contrary to this  weak formulation, we adopt  in this paper a strong formulation.  We fix a probability space, the Principal observes the output process, but she may not observe the drift nor the Brownian motion. The Principal and the Agent optimize their criteria under different  information flow, the Agent's filtration being larger than the  Principal's filtration.  We are in the context of stochastic control under partial observation (see Bensoussan \cite{bensoussan2004stochastic}). 
We solve the  mixed optimal stopping/stochastic control problem of the Principal on a larger set of controls that correspond to the adapted controls with respect to the total filtration which contains all information of the Principal and the Agent. At the optimum, we prove that  the total filtration and the filtration generated by the output process coincide. \\

There are two contributions in this paper. First, we  solve the Principal Agent Problem by using the strong approach in the non degenerate case i.e. the volatility of the output process is positive. Second,  we  compute the value of the information for the Principal provided by the observation of the Agent's effort.
The outline of the paper is as follows. In Section 2, we formulate the problem, using the strong 
approach and we define the Principal and the Agent problems, both in the first-best and second-best case.
Section 3 is dedicated to  the Principal-Agent problem in the first-best framework.
In Section 4, we solve the Principal-Agent problem in the second-best case by determining
the incentive compatible contract for the Agent and deriving the associated Hamilton Jacobi Bellman
Variational Inequality associated to the Principal value function.
Section 5 is dedicated to the numerical study.

 \section{Formulation of the Principal and Agent problem}
 Throughout the paper, let $\left(\Omega, \mathbb{F}=(\mathcal{F}_t)_{t\geq0}, \pr \right)$ be a filtered probability space where  $\mathbb{F}=(\mathcal{F}_t)_{t\geq0}$  is  the  filtration (satisfying the usual conditions of right-continuity and completeness) generated by $W$ an one dimensional Brownian motion.  We consider a time-continuous Principal-Agent problem where a risk-neutral Principal proposes a continuous
payment to a risk-averse Agent until a  random  terminal horizon  decided by the Principal.
In return the Agent, if accepting the contract,  makes an effort impacting the drift of the output  process of the project. 
 Throughout the paper, we adopt  the strong formulation that is the Principal-Agent problem is solved under the reference probability measure $\pr$.
 A contract is a triplet $\Gamma=((R_t)_t,\tau,\xi)$ where $R$ is a non-negative rent,  $\tau$ is the random terminal date of the contract (decided by the Principal) and 
$\xi$ is the cost of stopping the contract. The measurability of the triplet will be precised hereafter, depending on the situation considered (first-best or second-best case).
Given a contract $\Gamma=((R_s)_{s},\tau,\xi)$ offered by the Principal, the Agent gives a best response in terms of an effort  $(A_s)_{s\geq 0}$  which is an non-negative $\mathbb{F}$-progressively measurable process. The Agent's effort impacts 
the output process  of the project, which is an $\ff$-progressively measurable process $(X_s)_{s\geq 0}$ given by
\begin{equation}\label{Sv}
X_t := X_0+\int_0^t \varphi(A_s)ds+\sigma W_t,
\end{equation}
where
$X_0>0$ is the initial value of  the output process.
and  $\sigma>0$ is the volatility (or diffusion coefficient), that is assumed to be constant.
\noindent We introduce the following notations:
\begin{itemize}
\item $\ff^X:=(\mathcal{F}_t^X)_{t \geq 0}$ is the filtration generated by the output process $X$.
\item ${\cal{T}}$ is the set of all  {$\ff$}-stopping times.
\item ${\cal{T}}^X$ is the set of all  {$\ff^X$}-stopping times.
\end{itemize}
For the first-best case, the Principal and the Agent share the same information. A contract can be gathered into a quadruplet $((R_t)_t,\tau,\xi, A)$,
 where the rent $R$ is is a non-negative ${\ff}${-progressively measurable process}, $\tau\in{\cal{T}}$, and  
$\xi$ is  an non-negative ${\F}_\tau$-measurable {random variable}.
For the second-best case, the Principal observes the output process $X$, but she does not observe directly the Agent's effort.
We are in the context of asymmetric information. Thus the information of the Principal is conveyed by the filtration $\ff^X$. A contract is then a triplet $\Gamma=((R_t)_t,\tau,\xi)$ where $R$ is a non-negative ${\ff}^X${-progressively measurable process}, $\tau\in{{\cal T}^X}$, and
$\xi$ is  a non negative ${\F}^X_\tau$-measurable {random variable}\footnote{This is in contrast with the weak formulation, in which  the  rent $R$ is an $\ff$-progressively measurable process, $\tau\in{\cal{T}}$, and  
$\xi$ is   ${\F}_\tau$-measurable.}
Given a contract $\Gamma=((R_s)_{s},\tau,\xi)$ offered by the Principal, the Agent gives a best response in terms of an effort
process $A$: this is a Stackelberg leadership model.
 The Agent accepts the contract only if his expected payoff  at time $0$ exceeds his reservation value $\underline{x}>0.$\\
Let us now define the functions involved in the formulation of the optimization problems:

\begin{hypothese}\label{hyp}
\begin{itemize}
\item[\pf]$\varphi$ is the function that models the marginal impact of the Agent's efforts on the output process $\varphi:[0,\infty) \rightarrow[0,\infty)$ is {$C^2$} { strictly} concave, bounded, increasing, {$\varphi(0)=0$} and $\varphi'(0)>0$. We denote by $\|\varphi \|_{\infty}:=\sup_{a \geq 0 }\varphi(a)$.  
\item [\pf] The utility function of the Agent {$U :[0,\infty) \rightarrow[0,\infty)$} is $C^2$ strictly concave increasing and satisfying $U(0)=0$ and  Inada's conditions  $U'(\infty):=\Lim_{x\longrightarrow \infty} U'(x)= 0,~U'(0):=\Lim_{x\longrightarrow 0} U'(x)=\infty$.
\item [\pf] $h$ is the cost of the effort for the Agent; $h:[0,\infty) \rightarrow[0,\infty)$ is {$C^2$}, {strictly} convex {increasing}, $h(0)=0$ and $h^{'}(0)>0$.
\item[\pf] The  time preference parameter of the Agent (resp. of the Principal) is a positive constant denoted $\lambda$ (resp.  $\delta$). As the Principal is usually less  impatient than the Agent, we assume that $\lambda \geq \delta$.
  \end{itemize}
\end{hypothese}
\noindent {\underline {\it{Admissible Contracts:}}}\\
We define the following sets of admissible contracts, depending on the information flow which will be used to solve the optimization problems of the Principal and the Agent in the first-best and second-best cases. \\
\noindent For the first-best case, and as the Principal and the Agent share the same information, the set of admissible strategies is defined as follows:
\begin{eqnarray}\label{ensfb}
  \mathcal{E}&=&\{((R_s)_{s\geq 0},\tau,\xi, (A_s)_{s\geq 0})\mbox{ such that}~R_s\geq0\mbox{ and }A_s\geq0~ds\otimes d\pr\mbox{ are }\\
   & &\mathbb{F}\mbox{-progressively measurable},~\tau\in \mathcal{T}, \xi\geq 0~\mbox{is}~\mathcal{F}_\tau~\mbox{measurable}, \nonumber\\
& & \esp \left[\int_0^\infty e^{-\delta s}|\varphi(A_s)|\vee e^{-\lambda s}|h(A_s)| ds\right]<\infty,
\esp \left[\int_0^\infty e^{-\lambda s} |U(\R_s)| \vee e^{-\delta s} |R_s| ds\right]<\infty,\nonumber\\
  &&\mbox{ and }\esp \left[e^{-\lambda \tau} U(\xi)\vee e^{-\delta \tau} |\xi|{\un_{\{\tau<\infty\}}}\right]<\infty\}.\nonumber
\end{eqnarray}
\noindent For the second-best case, we fix $\rho>0$, and we introduce the following sets of admissible strategies for the Agent and for the Principal:
\begin{eqnarray}\nonumber
\mathcal{D}^{Ag}_\rho&:= &\{ (A_s)_{s\geq 0}~\ff~\mbox{-progressively measurable},~A_s\geq0~ds\otimes d\pr ~a.e~\mbox{such that}~\nonumber
\\
& &\esp \left[\int_0^\infty e^{(\rho-2\lambda) s}|h(A_s)|^2ds\right]<\infty ~\mbox{and}~\esp \left[\int_0^\infty e^{(\rho-2\delta)s}|\varphi(A_s)|^2\right]<\infty\}\label{calA-4}
\\\nonumber
\mathcal{D}^P_\rho&:=&\{ \big((R_s)_{s\geq 0},\tau,\xi\big), \mbox{such that}~ R\geq0~\mbox{ is } \mathbb{F}^X\mbox{-progressively measurable},~\tau\in\mathcal{T}^X\\
& &\xi\geq 0~\mbox{is}~\mathcal{F}_\tau^X~\mbox{measurable}~\mbox{such that }\esp \left[\int_0^\infty e^{(\rho-2\lambda )s } |U(R_s)|^2\vee e^{(\rho-2\delta) s} |R_s|^2 ds\right]<\infty\nonumber \\
& &\mbox{and }
\esp \left[(e^{(\rho-2\lambda) \tau} U^2(\xi)\vee e^{(\rho-2\delta) \tau} |\xi|^2){\un_{\{\tau<\infty\}}}\right]<\infty\}.\label{calA-44}
\end{eqnarray}
\noindent {\underline {\it{Objective function for the Agent and the Principal:}}}\\
Given an admissible strategy $(\Gamma, (A_s)_{s\geq 0})$, where $\Gamma=((R_s)_{s\geq 0},\tau,\xi)$, the objective function of the risk-averse Agent is defined by the expectation {under the probability $\pr$ of his aggregate utility of the rent minus the cost of his effort plus the utility of the penalty when  the  contract is stopped, all these quantities being discounting using the Agent's time preference parameter $\lambda$:
\begin{equation}\label{Objconsstatic}
J_0^{Ag}(\Gamma,A) :=\esp \left[\int_0^\tau e^{-{{{\lambda}}s}}(U(R_s)-h(A_s))ds+e^{-{{\lambda}}\tau}U(\xi){\un_{\{\tau<\infty\}}}\right].
\end{equation} 
Using the Agent's information flow $\ff$, his objective function starting from time $t$ is given by:
\begin{equation}\label{Objcons}
J_t^{Ag}(\Gamma,A) :=\esp \left[\int_t^\tau e^{-{{{\lambda}}(s-t)}}(U(R_s)-h(A_s))ds+e^{-{{\lambda}}(\tau-t)}U(\xi){\un_{\{\tau<\infty\}}}\big| \F_t\right].
\end{equation} 
The objective function of the risk-neutral Principal is defined by the expectation under the probability $\pr$ of the output process minus the rent paid to Agent minus the cost of stopping the {contract},  all these quantities being discounting using the Agent's time preference parameter $\delta$: 
\begin{eqnarray*}\label{inipublic}
J_0^{P}(\Gamma,A)	&:=&\esp\left[\int_0^\tau e^{-\delta s}(dX_s-R_s)ds-e^{-\delta \tau} \xi{\un_{\{\tau<\infty\}}}\right]\\
&=&\esp\left[\int_0^\tau e^{-\delta s}(\varphi(A_ s)-R_s)ds-e^{-\delta \tau} \xi {\un_{\{\tau<\infty\}}}\right].
\end{eqnarray*}
Using the Principal's information flow $\ff$ in the first-best case (resp.  $\ff^X$ in the second -best case), her  objective function starting from time $t$ is given by:
\begin{eqnarray*}
J_t^{P,FB}(\Gamma,A)	
&=&\esp\left[\int_t^\tau e^{-\delta (s-t)}(\varphi(A_ s)-R_s)ds-e^{-\delta (\tau-t)} \xi {\un_{\{\tau<\infty\}}} \big| \F_t \right].\\
J_t^{P,SB}(\Gamma,A)	
&=&\esp\left[\int_t^\tau e^{-\delta (s-t)}(\varphi(A_ s)-R_s)ds-e^{-\delta (\tau-t)} \xi {\un_{\{\tau<\infty\}}} \big| \F^X_t \right].\\
\end{eqnarray*}

\noindent{\underline {\it{Value function for the Principal in the first-best case:}}\\
Since the  two parts share the same information, the Principal solves:
\begin{equation*}
V_0^{FB}:= \Sup_{(\Gamma,A)\in\mathcal{E}}J_0^{P}(\Gamma,A)
\end{equation*}
subject to the reservation constraint  $J_0^{Ag}(\Gamma,A)\geq \underline{x}$, where $\mathcal{E}$ is given by (\ref{ensfb}).\\

\noindent{\underline {\it{Value function for  the Principal in the second-best case:}}\\
Given $\Gamma \in \mathcal{D}^{P}$, the Agent solves
\begin{equation*}
\sup_{A\in \mathcal{D}^{Ag}_\rho}J_0^{Ag}(\Gamma,A),
\end{equation*}
where $\mathcal{D}^{Ag}_\rho$ is given by (\ref{calA-4}). 
 The public anticipates the Agent's best response $A^*$ to propose the optimal contract and aims to solve:
\begin{equation*}
V_0^{SB} := \sup_{\Gamma\in \mathcal{D}^{P}}J_0^{P}(\Gamma,A^*),
\end{equation*}
subject to the reservation constraint $ J_0^{Ag}(\Gamma,A^*)\geq \underline{x}$, where $\mathcal{D}^{P}$ is defined by (\ref{calA-44}).

\begin{remarque}
  The difference between the strong formulation and the weak formulation lies in the conditions of measurability.
  In the weak formulation, the objective function for the Agent and the Principal are computed under the probability measure induced
  by the effort of the agent (usually denoted by $\pr^A$) {and} are both conditionally on $\F_t$, while in the strong formulation, the objective functions are computed under $\pr$  the objective function of Agent is, conditionally on $\F_t$ and the objective function of the Principal is, conditionally on $\F_t^X$.
\end{remarque}

\section{The first-best case}
In this section, the Principal and the Agent share the same information. They observe both $X$ and the effort $A$. We are in the context of risk sharing.
They have to agree how to share the risk between themselves. We first rewrite the problem of the Principal in a more tractable stochastic control form. Then,
we provide some properties related to the optimal contract.
We distinguish two cases: either the Principal does not stop the contract i.e the infinite horizon case, or she could stop the contract.
\subsection {The infinite horizon contract}
The problem becomes a stochastic control problem for a single individual "the Principal", who chooses both the rent and the effort, under the reservation constraint.
In this case, the Principal's optimization problem is expressed as follows:
\begin{equation}\label{horizoninfini}
v(x)=\Sup_{(R,\infty,0,A)\in\mathcal{E}}\esp\left[\int_0^\infty e^{-\delta s}(\varphi(A_s)-R_s)ds\right]
\end{equation}
subject to the reservation constraint
\begin{equation}\label{Rc}
\esp\left[\int_0^\infty e^{-{{{\lambda}} s}}(U(R_s)-h(A_s))ds\right]\geq \underline{x}
\end{equation}
We introduce a Lagrange multiplier $\lambda_{Lag}$ in order to solve this problem, and we consider the unconstrained problem: 
\begin{eqnarray*}
\Sup_{(R,\infty,0,A)\in\mathcal{E}}\left\lbrace \esp\left[\int_0^\infty e^{-\delta s}(\varphi(A_s)-R_s)ds\right]+\lambda_{Lag}(\esp\left[\int_0^\infty e^{-\lambda s}(U(R_s)-h(A_s))ds\right]-\underline{x})\right\rbrace \\
=\Sup_{(R,\infty,0,A)\in\mathcal{E}}\esp\left[\int_0^\infty e^{-\delta s}(\varphi(A_s)-R_s)ds+\lambda_{Lag}[e^{-\lambda s}(U(R_s)-h(A_s))ds]\right]-\lambda_{Lag}\underline{x}.
\end{eqnarray*}
\begin{proposition}\label{horinfini}
 Under Assumption \ref{hyp}, the solution of (\ref{horizoninfini}) under the reservation constraint (\ref{Rc}) is given by\footnote{using the notation $x\vee y:= \max(x,y).$}: 
 \begin{equation}\label{effortrent}
R_s^*=(U')^{-1}(\frac{1}{\lambda_{Lag}}e^{(\lambda-\delta)s})~\mbox{and}~ A_s^*=(\frac{h'}{\varphi'})^{-1}(\frac{1}{\lambda_{Lag}}e^{(\lambda-\delta)s}) { \vee 0}\,\mbox{ for all }  s\in [0,\infty),
 \end{equation}
where the Lagrange multiplier $\lambda_{Lag}$ satisfies the following equation:
\begin{equation}\label{cas01}
  \int_0^\infty e^{-{{{\lambda}} s}}\left(U\left((U')^{-1}(\frac{1}{\lambda_{Lag}}e^{(\lambda-\delta)s})\right)
  -h\left((\frac{h'}{\varphi'})^{-1}(\frac{1}{\lambda_{Lag}}e^{(\lambda-\delta)s}){\vee 0}\right)\right)ds=\underline{x}.
\end{equation}
As a function of the time, 
 the optimal rent $R^*$ is  non-increasing  and the optimal effort $A^*$ is non-decreasing.
\end{proposition}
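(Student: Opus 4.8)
The plan is to build on the Lagrangian relaxation written just before the statement. First I would rewrite the penalised functional as $\esp[\int_0^\infty(\psi_s(R_s)+\Phi_s(A_s))\,ds]-\lambda_{Lag}\underline{x}$ with $\psi_s(r):=-e^{-\delta s}r+\lambda_{Lag}e^{-\lambda s}U(r)$ and $\Phi_s(a):=e^{-\delta s}\varphi(a)-\lambda_{Lag}e^{-\lambda s}h(a)$, the decisive structural point being that the integrand splits additively into a term involving only $R_s$ and a term involving only $A_s$. Consequently the supremum over $\mathcal{E}$ can be evaluated by maximising $\psi_s$ and $\Phi_s$ separately and pointwise in $(s,\omega)$. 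Since the maximisers obtained below turn out to be deterministic functions of $s$, they are automatically $\ff$--progressively measurable; and for any admissible $(R,\infty,0,A)\in\mathcal{E}$ one has $\psi_s(R_s)+\Phi_s(A_s)\le\sup_{r\ge0}\psi_s(r)+\sup_{a\ge0}\Phi_s(a)$ pointwise, while the integrability conditions built into the definition (\ref{ensfb}) of $\mathcal{E}$ exclude any $\infty-\infty$ indeterminacy, so integrating yields the matching upper bound. This reduces the whole statement to the two scalar maximisations plus a choice of $\lambda_{Lag}$.

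For the rent I would note that $\psi_s$ is strictly concave (because $U$ is), with $\psi_s'(r)=-e^{-\delta s}+\lambda_{Lag}e^{-\lambda s}U'(r)$; Inada's conditions make $U'$ a continuous strictly decreasing bijection of $(0,\infty)$ onto $(0,\infty)$, so the first--order condition has a unique, interior, hence maximising root, namely $R_s^*=(U')^{-1}(e^{(\lambda-\delta)s}/\lambda_{Lag})>0$. For the effort, $\Phi_s$ is concave ($\varphi$ concave, $h$ convex), with $\Phi_s'(a)=e^{-\delta s}\varphi'(a)-\lambda_{Lag}e^{-\lambda s}h'(a)$. The preliminary observation here is that $h'/\varphi'$ is continuous and strictly increasing on $[0,\infty)$ and maps it onto $[h'(0)/\varphi'(0),\infty)$: indeed $h'$ is positive and increasing ($h$ strictly convex), $\varphi'$ is positive and decreasing ($\varphi$ strictly concave), and $\varphi'(a)\to0$ as $a\to\infty$ since $\varphi$ is bounded with monotone derivative. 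Hence, when $e^{(\lambda-\delta)s}/\lambda_{Lag}\ge h'(0)/\varphi'(0)$, $\Phi_s'$ vanishes at the interior maximiser $(h'/\varphi')^{-1}(e^{(\lambda-\delta)s}/\lambda_{Lag})$, whereas otherwise $\Phi_s'(0)<0$ and the concave $\Phi_s$ is maximised at $a=0$; these two regimes are exactly what the truncation at $0$ in the formula (\ref{effortrent}) for $A_s^*$ encodes. This case analysis, together with the bijectivity of $h'/\varphi'$ --- the single place where boundedness of $\varphi$ really enters --- is, in my view, the first point requiring genuine care.

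Next I would fix the multiplier. Let $g(\ell)$ denote the left--hand side of (\ref{cas01}) with $\lambda_{Lag}$ replaced by $\ell>0$, that is, the Agent's criterion evaluated at the candidate contract built from $\ell$; this integral is finite for every $\ell>0$ because the optimality inequality $\Phi_s(A_s^*)\ge\Phi_s(0)=0$ gives $e^{-\lambda s}h(A_s^*)\le \ell^{-1}e^{-\delta s}\varphi(A_s^*)\le \ell^{-1}\|\varphi\|_{\infty}e^{-\delta s}$, while $R_s^*\le R_0^*$ (monotonicity, see below) controls the rent term. Monotonicity of $(U')^{-1}$ (decreasing) and of $(h'/\varphi')^{-1}\vee0$ (increasing) shows that $g$ is continuous and strictly increasing on $(0,\infty)$; sending $\ell\to0^+$ and $\ell\to\infty$ and invoking $U'(0)=\infty$, $U'(\infty)=0$, $\varphi(0)=0$ and $h(0)=0$ identifies its range, so that there is a unique $\lambda_{Lag}>0$ with $g(\lambda_{Lag})=\underline{x}$ --- which is precisely (\ref{cas01}) --- provided $\underline{x}$ lies in that range (the implicit non--degeneracy requirement, automatic when $U$ is unbounded). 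This existence and uniqueness step is the second delicate point. With this $\lambda_{Lag}$, admissibility $(R^*,\infty,0,A^*)\in\mathcal{E}$ follows from the same two estimates together with boundedness of $\varphi$ (so that $e^{-\delta s}\varphi(A_s^*)\le\|\varphi\|_{\infty}e^{-\delta s}$), the terminal terms being null since $\tau=\infty$ and $\xi=0$.

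Finally, to pass from the penalised to the constrained problem, I would take any admissible contract satisfying the reservation constraint (\ref{Rc}), add to its Principal payoff the nonnegative slack $\lambda_{Lag}(\esp[\int_0^\infty e^{-\lambda s}(U(R_s)-h(A_s))\,ds]-\underline{x})$, and bound the result by the penalised supremum $\esp[\int_0^\infty e^{-\delta s}(\varphi(A_s^*)-R_s^*)\,ds]+\lambda_{Lag}(g(\lambda_{Lag})-\underline{x})$:
\[
\esp\left[\int_0^\infty e^{-\delta s}(\varphi(A_s)-R_s)ds\right]\le\esp\left[\int_0^\infty e^{-\delta s}(\varphi(A_s^*)-R_s^*)ds\right]+\lambda_{Lag}\big(g(\lambda_{Lag})-\underline{x}\big),
\]
and since $g(\lambda_{Lag})=\underline{x}$ the last term vanishes, so $(R^*,\infty,0,A^*)$ is optimal. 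The monotonicity claims are then immediate: as $\lambda\ge\delta$, the map $s\mapsto e^{(\lambda-\delta)s}/\lambda_{Lag}$ is non-decreasing, so composing with the decreasing $(U')^{-1}$ makes $R^*$ non-increasing and composing with the increasing $(h'/\varphi')^{-1}(\cdot)\vee0$ makes $A^*$ non-decreasing. Overall, I expect the additive splitting, the weak--duality inequality and the monotonicity to be routine; the genuine obstacles are the structural analysis of $\Phi_s$ (invertibility of $h'/\varphi'$ and the corner regime) and the existence of an admissible Lagrange multiplier solving (\ref{cas01}).
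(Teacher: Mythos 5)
Your proof follows the same route as the paper's: Lagrangian relaxation, pointwise maximisation of the penalised integrand separately in $r$ and $a$ (giving the interior root for the rent and the $\vee\,0$ corner for the effort), the binding of the reservation constraint to determine $\lambda_{Lag}$, and monotonicity in $s$ read off from the composition of monotone maps. You are in fact more complete than the paper on two points it leaves implicit: the existence and uniqueness of the multiplier via the strict monotonicity of $g$ (which the paper only establishes later, in Lemma \ref{mono}, for a different purpose) and the explicit weak-duality inequality in place of the paper's bare appeal to the complementary slackness condition.
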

  This means that since the Agent is more impatient than the Principal, he will receive a higher rent  and he will provide a smaller effort  at the beginning of the contract than in the future. \\
 
\begin{proof}
We fix $s\in [0,\infty)$. The function $x\longrightarrow -e^{-\delta s}x +\lambda_{Lag} e^{-\lambda s} U(x)$ is concave on $[0,\infty)$.
The first order condition for maximization on $(0,\infty)$ is given by:
\begin{eqnarray*}
-e^{-\delta s} +\lambda_{Lag} e^{-\lambda s} U^{'}(x^*)=0.
\end{eqnarray*}
Here $\lambda_{Lag}$ must be positive, otherwise $e^{-\delta s}=0$ which is false. We deduce that the optimal rent  is positive, deterministic and is given by:
\begin{eqnarray*}
R_s^*=(U')^{-1}(\frac{1}{\lambda_{Lag}}e^{(\lambda-\delta)s}).
\end{eqnarray*}
For the optimal effort, we consider on $(0,\infty)$ the function $x\longrightarrow -e^{-\delta s}\varphi(x) -\lambda_{Lag} e^{-\lambda s} h(x)$, which is concave.
  The first order condition of optimality  is given by
\begin{eqnarray*}
-e^{-\delta s} \varphi'(x^*) -\lambda_{Lag} e^{-\lambda s} h^{'}(x^*)=0,  \mbox{ if } x^*>0
\end{eqnarray*}  
otherwise $x^{*}=0$. 
We deduce that the optimal effort  is non-negative, deterministic and is given by:
\begin{eqnarray*}
 A_s^*=(\frac{h'}{\varphi'})^{-1}(\frac{1}{\lambda_{Lag}}e^{(\lambda-\delta)s})\vee 0.
\end{eqnarray*}  
The linear independence constraint qualification is satisfied as $U'({R^*_s})$ and $h'({A^*_s})$ are positive.
From Assumption \ref{hyp}, and since $\varphi$ and $U$ are strictly concave, $h$ is strictly convex, then we have
\begin{eqnarray*}
 \left\{
    \begin{array}{ll}
        e^{-\delta t}\varphi^{''}({A^*_s})-\lambda_{Lag}e^{-\lambda t}h^{''}({A^*_s})<0,\\
     \lambda_{Lag}e^{-\lambda t}U^{''}({R^*_s})<0,
    \end{array}
\right. 
\end{eqnarray*}
and so the sufficient conditions of optimality are satisfied.
The optimal rent  and the optimal effort are deterministic controls  given by (\ref{effortrent}).\\
It follows that: 
\begin{eqnarray}\label{contrai}
& &\esp\left[\int_0^\infty e^{-{{{\lambda}} s}}(U(R_s^*)-h(A_s^*))ds\right]\\
  &=&\int_0^\infty e^{-{{{\lambda}} s}}
  \left(
  U\left((U')^{-1}(\frac{1}{\lambda_{Lag}}e^{(\lambda-\delta)s})\right)
  -h\left((\frac{h'}{\varphi'})^{-1}(\frac{1}{\lambda_{Lag}}e^{(\lambda-\delta)s} ) \vee 0\right)
  \right)ds.\nonumber
\end{eqnarray}
The first order condition with respect to $\lambda_{Lag}$ is given by the complementary condition  i.e.
  $$\lambda_{Lag}\Big(\esp\left[\int_0^\infty e^{-\lambda s}(U(R_s^*)-h(A_s^*))ds\right]-\underline{x}\Big)=0.$$
Since $\lambda_{Lag}>0$ and using (\ref{contrai}), we obtain  Equation (\ref{cas01})  satisfied by $\lambda_{lag}$ (the reservation constraint is binded). 
{ The monotonicity of $s\rightarrow R_s^*=(U')^{-1}(\frac{1}{\lambda_{Lag}}e^{(\lambda-\delta)s})$ (non-increasing)  and of $s\rightarrow  A_s^*=(\frac{h'}{\varphi'})^{-1}(\frac{1}{\lambda_{Lag}}e^{(\lambda-\delta)s}) \vee 0$ (non-decreasing) is a direct consequence of the increase  of $s \rightarrow e^{(\lambda-\delta)s}$ (since $\lambda \geq  \delta$), the decrease of $(U')^{-1}$ (since $U$ is concave) and the increase of $(\frac{h'}{\varphi'})^{-1}$ (since $h$ is convex and $\varphi$ is concave). If $\lambda >  \delta$ then $s\rightarrow R_s^*$ is decreasing.  }
\end{proof}\pf
\begin{ex}\label{exnum}
  Suppose that the functions $\varphi$ and $h$ are given by $\varphi(x)=3(1-\exp(-\alpha x))$ and $h(x)=\exp(\beta x)-1.$
  Then $\varphi'(x)=3\alpha\exp(-\alpha x),~h'(x)=\beta\exp(\beta x)$,
  which implies that $\frac{h'}{\varphi'}(x)=\frac{\beta}{3\alpha}\exp((\beta+\alpha)x)$,
  and so  $(\frac{h'}{\varphi'})^{-1}(x)=\frac{1}{\alpha+\beta}\log(\dfrac{3\alpha}{\beta}x)$.
  In the case of power utility function, i.e. $U(x)=cx^p$ {(0<p<1)} and so $(U')^{-1}(x)=(\frac{x}{pc})^{\frac{1}{p-1}}$, (\ref{effortrent}) yields the expressions
$${R_t^*=\frac{1}{(pc\lambda_{Lag})^{\frac{1}{p-1}}}\exp\left(\frac{(\lambda-\delta)t}{p-1}\right)}\,\mbox{ and }A^*_t=\frac{1}{\alpha+\beta}\log\left(\frac{3\alpha}{\beta\lambda_{Lag}}\exp((\lambda-\delta)t)\right){ \vee 0}.$$
 If $\lambda_{Lag}  \leq  3 \frac{\alpha}{ \beta}$, then $A^*_t=\frac{1}{\alpha+\beta}\log\left(\frac{3\alpha}{\beta\lambda_{Lag}}\exp((\lambda-\delta)t)\right)$ for all $t$.
$$
\hspace*{-1cm} \mbox{ If } \lambda_{Lag}  >  3 \frac{\alpha}{ \beta}, \mbox{ then }  
A^*_t= \left\{
    \begin{array}{ll}
    0 & \mbox{ for  } (\lambda-\delta) t \leq  -\log\left(\frac{3\alpha}{\beta\lambda_{Lag}} \right) \\
      \frac{1}{\alpha+\beta}\log\left(\frac{3\alpha}{\beta\lambda_{Lag}}\exp((\lambda-\delta)t)\right) & \mbox{ for } (\lambda-\delta) t >  -\log\left(\frac{3\alpha}{\beta\lambda_{Lag}} \right).
    \end{array}
\right.
$$
The Lagrange multiplier $\lambda_{Lag}$ is solution to equation \eqref{contrai} (which has  two different expressions depending on the interval considered)
$$
 \left\{
    \begin{array}{ll}
    \dfrac{c}{(pc\lambda_{Lag})^{\frac{p}{p-1}}(\lambda-\frac{p}{p-1}(\lambda-\delta))}+\frac{1}{\lambda}- \left(\frac{3\alpha}{\beta\lambda_{Lag}}\right)^{\frac{\beta}{\alpha+\beta}}\dfrac{\alpha+\beta}{\lambda\alpha+\beta\delta}&=\underline x   \mbox{ on   } [0, 3 \frac{\alpha}{ \beta}] \label{lamlag} \\
    \dfrac{c}{(pc\lambda_{Lag})^{\frac{p}{p-1}}(\lambda-\frac{p}{p-1}(\lambda-\delta))}
+ \left( \frac{1}{\lambda}\left(\frac{3\alpha}{\beta \lambda_{Lag}}\right)^{\frac{\lambda}{\lambda-\delta}}
-\dfrac{\alpha+\beta}{\lambda\alpha+\beta\delta} 
\left(\frac{3\alpha}{\beta\lambda_{Lag}}   \right)^{\frac{\lambda}{\lambda-\delta}  } \right) \mathbf 1_{\lambda \neq \delta}& =\underline x
\mbox{ on   }  ]3 \frac{\alpha}{ \beta}, +\infty[. \label{lamlagbis} 
    \end{array}
\right.
$$
\end{ex}

\subsection { The general case}
In the first-best framework, and in the general case, the Principal is faced with the following optimization problem: 
\begin{eqnarray}\label{optimgeneral}
\Sup_{(R,\tau,\xi,A)\in \mathcal{E}}\esp\left[\int_0^\tau e^{-\delta s}(\varphi(A_s)-R_s)ds -e^{-\delta \tau} \xi\right],
\end{eqnarray}
where $\mathcal{E}$ is given by (\ref{ensfb}), and subject to the reservation constraint
$$\esp\left[\int_0^\tau e^{-\lambda s}(U(R_s)-h(A_s))ds +e^{-\lambda \tau} U(\xi)\right]\geq \underline{x}.$$
The main result of Section 3 is the following proposition. It gives the possible cases in the  first-best case. 

\begin{proposition}\label{propositioncas}
Under Assumption \ref{hyp}, in the first-best case, we define  the function $$t\longrightarrow H_t:=\varphi(A_t^*)-R_t^*,$$ where $A^*$ and $R^*$ are given in (\ref{effortrent}). Then
\begin{enumerate}
\item If {$\int_0^{\infty}e^{-\delta s}H_sds>0$}, it is not optimal for the Principal to end the contract ($\tau^*=\infty$ is optimal).
\item If {$\int_0^{\infty}e^{-\delta s}H_sds \leq 0$}, the Principal does not offer the contract to the Agent ({$\tau^*=0$ is optimal}).
\end{enumerate}
\end{proposition}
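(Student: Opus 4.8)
The plan is to dispose of the participation constraint by Lagrangian relaxation, carry out the pointwise maximisation over the rent $R_s$, the effort $A_s$ and the terminal payment $\xi$, and thereby collapse the Principal's problem onto a \emph{deterministic} optimal stopping problem in the single variable $t\in[0,\infty]$; the location of its maximiser is then dictated by the sign of $\int_0^\infty e^{-\delta s}H_s\,ds$.

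Fix the multiplier $\lambda_{Lag}>0$ of Proposition~\ref{horinfini}, so that, writing $G_s:=U(R_s^*)-h(A_s^*)$, the relation \eqref{cas01} reads $\int_0^\infty e^{-\lambda s}G_s\,ds=\underline{x}$. For any $(R,\tau,\xi,A)\in\mathcal{E}$ meeting the reservation constraint, $J_0^{Ag}\ge\underline{x}$ gives $J_0^P\le J_0^P+\lambda_{Lag}(J_0^{Ag}-\underline{x})$, and after expanding,
\[
V_0^{FB}\ \le\ \sup_{(R,\tau,\xi,A)\in\mathcal{E}}\esp\Big[\int_0^\tau\ell(s,R_s,A_s)\,ds+m(\tau,\xi)\un_{\{\tau<\infty\}}\Big]-\lambda_{Lag}\underline{x},
\]
with $\ell(s,r,a):=e^{-\delta s}(\varphi(a)-r)+\lambda_{Lag}e^{-\lambda s}(U(r)-h(a))$ and $m(t,\xi):=-e^{-\delta t}\xi+\lambda_{Lag}e^{-\lambda t}U(\xi)$. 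Exactly as in the proof of Proposition~\ref{horinfini}, $r\mapsto-e^{-\delta s}r+\lambda_{Lag}e^{-\lambda s}U(r)$ is concave with interior maximiser $R_s^*$ (because $U'(0)=\infty$) and $a\mapsto e^{-\delta s}\varphi(a)-\lambda_{Lag}e^{-\lambda s}h(a)$ is concave on $[0,\infty)$ with maximiser $A_s^*$; hence $\ell(s,r,a)\le\ell(s,R_s^*,A_s^*)=e^{-\delta s}H_s+\lambda_{Lag}e^{-\lambda s}G_s=:\ell^*(s)$. Since $m(t,\cdot)$ is the rent part of $\ell$, for every $\omega$ the map $\xi\mapsto m(\tau(\omega),\xi)$ is maximised over $[0,\infty)$ at the $\F_\tau$-measurable point $\xi=R_\tau^*$, with value $\Phi(\tau)$, where $\Phi(t):=-e^{-\delta t}R_t^*+\lambda_{Lag}e^{-\lambda t}U(R_t^*)\ge m(t,0)=0$.

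This reduces the bound to a pure optimal stopping problem, $V_0^{FB}\le\sup_{\tau\in\mathcal{T}}\esp[\int_0^\tau\ell^*(s)\,ds+\Phi(\tau)\un_{\{\tau<\infty\}}]-\lambda_{Lag}\underline{x}$, in which the reward is a deterministic function of $\tau$; therefore the supremum is realised on constant stopping times and equals $\sup_{t\in[0,\infty]}\Theta(t)$, where $\Theta(t):=\int_0^t\ell^*(s)\,ds+\Phi(t)\un_{\{t<\infty\}}-\lambda_{Lag}\underline{x}$. Two values come for free: since $\Phi(t)\to 0$ as $t\to\infty$ (a consequence of $\lambda\ge\delta$ and the Inada conditions) and $\int_0^\infty\lambda_{Lag}e^{-\lambda s}G_s\,ds=\lambda_{Lag}\underline{x}$ by \eqref{cas01}, one gets $\Theta(\infty)=\int_0^\infty e^{-\delta s}H_s\,ds$, which is the value of the infinite-horizon problem \eqref{horizoninfini} and is attained by the admissible contract $(R^*,\infty,0,A^*)$; and $\Theta(0)=\Phi(0)-\lambda_{Lag}\underline{x}$ is the payoff of stopping at once.

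It remains to show that $\Theta$ attains its maximum on $[0,\infty]$ at $t=\infty$ when $\int_0^\infty e^{-\delta s}H_s\,ds>0$, and that $\sup_{[0,\infty]}\Theta\le 0$, with $t=0$ optimal among contracts, when $\int_0^\infty e^{-\delta s}H_s\,ds\le 0$ — this is the crux. I would differentiate, $\Theta'(t)=\ell^*(t)+\Phi'(t)$, and use the identity $U'(R_t^*)=\frac{1}{\lambda_{Lag}}e^{(\lambda-\delta)t}$ (which makes the $\frac{d}{dt}R_t^*$-terms in $\Phi'$ cancel) to reduce $\Theta'(t)$ to an explicit expression in $R_t^*$ and $A_t^*$ whose sign is governed by $\lambda\ge\delta$, the concavity of $U$ and $\varphi$, the convexity of $h$, and the monotonicities of $t\mapsto R_t^*$ (non-increasing) and $t\mapsto A_t^*$ (non-decreasing) from Proposition~\ref{horinfini}. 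In the first case this gives $\Theta(t)\le\Theta(\infty)$ for all $t$, hence $V_0^{FB}\le\int_0^\infty e^{-\delta s}H_s\,ds$; together with the converse inequality (attained at $\tau=\infty$) we obtain $V_0^{FB}=\int_0^\infty e^{-\delta s}H_s\,ds>0$ and $\tau^*=\infty$ is optimal — terminating the contract is never worthwhile. In the second case $\Theta(\infty)=\int_0^\infty e^{-\delta s}H_s\,ds\le 0$, and the same analysis yields $\Theta\le 0$ on $[0,\infty]$, so no contract can give the Principal a strictly positive payoff; she is then (weakly) better off not contracting, the degenerate choice $\tau^*=0$ being optimal among contracts. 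I expect the sign analysis of $\Theta'$ to be the only real difficulty; everything leading up to it is routine.
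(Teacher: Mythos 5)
Your reduction is cleaner than the paper's in one respect — you explicitly optimize over the terminal payment $\xi$, which the paper's proof silently ignores (it only tracks the flow term $H$ and its monotonicity) — but the step you defer as ``the only real difficulty'' is a genuine gap, and as sketched it does not close. Compute $\Theta'$ with your cancellation: using $\lambda_{Lag}e^{-\lambda t}U'(R_t^*)=e^{-\delta t}$ one gets $\Phi'(t)=e^{-\delta t}\bigl(\delta R_t^*-\lambda\,U(R_t^*)/U'(R_t^*)\bigr)\le 0$ (by concavity and $\lambda\ge\delta$), while $\ell^*(s)\ge\ell(s,0,0)=0$. So $\Theta$ is the sum of a non-decreasing and a non-increasing function of $t$, and nothing in Assumption \ref{hyp} forces its maximum to sit at an endpoint, nor forces $\Theta(t)\le\Theta(\infty)$. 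Indeed $\Theta(t)-\Theta(\infty)=\Phi(t)-\int_t^\infty\ell^*(s)\,ds$ with $\Phi(t)=\lambda_{Lag}e^{-\lambda t}\bigl(U(R_t^*)-U'(R_t^*)R_t^*\bigr)>0$, and this difference compares a \emph{lump-sum} channel against a \emph{flow} channel for delivering utility to the Agent; its sign depends on the magnitudes of $\delta$ and $\lambda$ (e.g.\ for $U(x)=\sqrt{x}$, $\lambda=\delta$ large, and $\varphi$ bounded, the lump sum is an order of magnitude cheaper and $\Theta(t)>\Theta(\infty)$ for all finite $t$), not merely on $\lambda\ge\delta$ and the convexity/concavity hypotheses you invoke. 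So the asserted conclusion ``$\Theta(t)\le\Theta(\infty)$ in case 1'' is not a routine sign check — it can fail — and your argument stops exactly where the proof has to happen.

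For the record, the paper proceeds differently and more modestly: it fixes the same multiplier, observes that $R^*$ is non-increasing and $A^*$ non-decreasing so that $t\mapsto H_t$ is non-decreasing with at most one sign change at some $t_0$, and then compares $\bigl|\int_0^{t_0}e^{-\delta s}H_s\,ds\bigr|$ with $\int_{t_0}^\infty e^{-\delta s}H_s\,ds$ to decide between $\tau=0$ and $\tau=\infty$; it never confronts the terminal payment $\xi$ or intermediate stopping times, so it avoids (rather than resolves) the difficulty your $\Phi$ makes visible. If you want to complete your route, you either need an additional structural hypothesis making the flow channel weakly dominate the lump-sum channel (so that $\Phi(t)\le\int_t^\infty\ell^*(s)\,ds$ for all $t$), or you must abandon the single fixed multiplier $\lambda_{Lag}$ from the infinite-horizon problem and re-optimize the dual variable jointly with $\tau$ and $\xi$, since the relaxation with that particular multiplier is only tight at $\tau=\infty$.
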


\begin{proof}
We introduce the Lagrangian problem. The first order conditions inside the expectation with respect to the effort and the rent are the same as in the infinite horizon contract. 
 We know that $\lambda_{Lag}\geq 0$. If $\lambda_{Lag}= 0$, then as in the proof of the infinite horizon case, we deduce that for all $t\geq 0$, we have $R^*_t=0 $, $A^*_t=0 $ (and $H(t)=0$ for all $t$). It is then clear that we must have $\tau=0$ otherwise the reservation constraint is never satisfied .\\
For $\lambda_{Lag}>0$, the function $t\longrightarrow H_t$ is well-defined.\\
 Since $\lambda\geq \delta$,  $t\mapsto \frac{1}{\lambda_{Lag}}e^{(\lambda-\delta)t}$ is non-decreasing.
The strict concavity  of the function $\varphi$ and the strict convexity of the function $h$ yields the function $t\rightarrow (\frac{h'}{\varphi'})^{-1}(\frac{1}{\lambda_{Lag}}e^{(\lambda-\delta)t})$ is non-decreasing. The strict concavity of $U$ and the decrease of $U'$ yields that the function 
$t\rightarrow (U')^{-1}(\frac{1}{\lambda_{Lag}}e^{(\lambda-\delta)t})$ is non-increasing. Therefore the optimal rent is non-increasing and the optimal effort is non-decreasing in time. By monotonicity  of the function $\varphi$, the function $t\rightarrow H_t$ is non-decreasing.\\
We have the following cases:\\
{\underline {First case:}} If $H_0>0$, and since $U^{-1}$ in non-negative, as in Proposition \ref{horinfini}, {it} is optimal for the Principal to never stop the contract ({$\tau=\infty$ is optimal}).\\
{\underline {Second case:}} If $H_0\leq 0$, and if {there} exists a $t_0$ such that $H_{t_0}=0,$
we compute: $\int_0^{t_0} e^{-\delta s}H_sds$ and $\int_{t_0}^\infty e^{-\delta s}H_sds.$
 \vspace*{-3mm}
 \begin{enumerate}
 \item If $|\int_0^{t_0} e^{-\delta s}H_sds|\leq \int_{t_0}^\infty e^{-\delta s}H_sds$, as in Proposition \ref{horinfini}, it is optimal for the Principal  to never stop the contract ({$\tau=\infty$ is optimal}) because the profit is greater than the loss.
 \item If $|\int_0^{t_0} e^{-\delta s}H_sds|\geq \int_{t_0}^\infty e^{-\delta s}H_sds$, it is optimal for  the Principal  not to offer the contract to the Agent ({$\tau=0$ is optimal}) because she knows that the loss will be greater than the profit. \pf
 \end{enumerate}
\end{proof}

\noindent Figure \ref{H_tcas1}  illustrates the monotonicity of $H$  with respect to $t$ obtained in Proposition \ref{propositioncas},  for the numerical case provided in Example \ref{exnum}  with  $\alpha= \beta=0.1$ and $p=\frac{1}{4}$.

    \begin{figure}[H]
        \includegraphics[width=14cm,height=10cm]{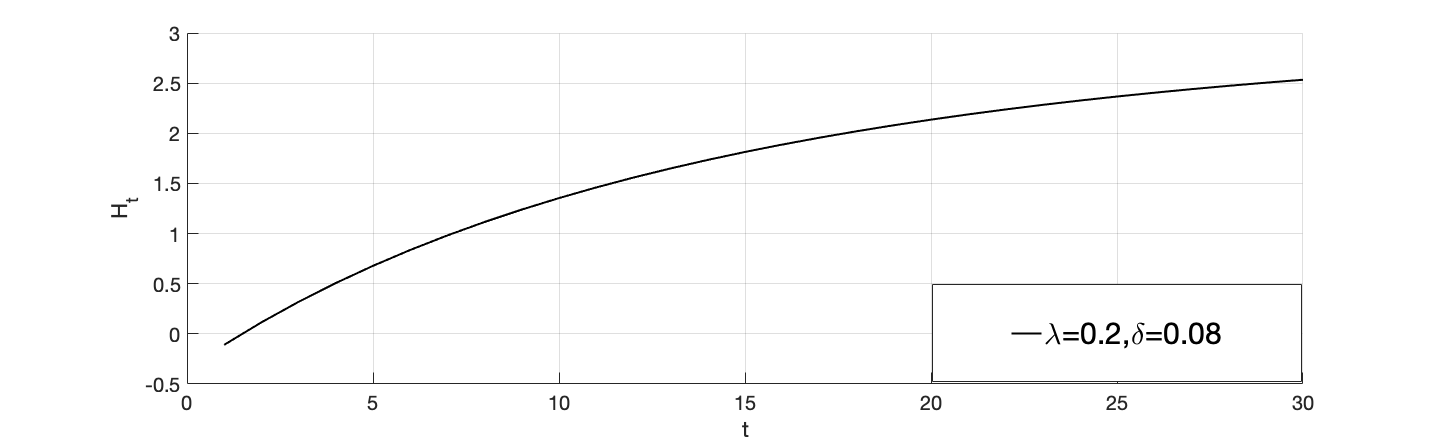}
\caption{$t\mapsto H_t(0)$  }\label{H_tcas1}
    \end{figure}


\subsection{The monotonicity of the controls with respect to the Agent's value function}
\noindent The following proposition shows the monotonicity of the optimal rent and the optimal effort with respect to the Agent's initial value function, denoted $x$.
\begin{proposition}\label{monoto}
 Under  Assumptions \ref{hyp}, in the non-degenerate case  $\tau^* \neq 0$  \\
 (1) The optimal rent is increasing  and the optimal effort is decreasing with respect to $x.$\\
 (2) The function $H_t=\varphi(A_t^*)-R_t^*$ is decreasing with respect to $x$.
\end{proposition}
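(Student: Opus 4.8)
The plan is to exploit that, in the non-degenerate case, the optimal contract coincides with the infinite-horizon one of Proposition~\ref{horinfini}, so that $R^*$ and $A^*$ depend on the Agent's initial value $x=\underline x$ \emph{only through the Lagrange multiplier} $\lambda_{Lag}$; the whole statement then reduces to the monotonicity of $x\mapsto\lambda_{Lag}(x)$. Concretely, since $\tau^*\neq 0$, Proposition~\ref{propositioncas} forces $\tau^*=\infty$, so the optimal rent and effort are given by \eqref{effortrent} and $\lambda_{Lag}=\lambda_{Lag}(x)>0$ is determined by the binding reservation constraint \eqref{cas01}. I would set
\begin{equation*}
F(m):=\int_0^\infty e^{-\lambda s}\left(U\!\left((U')^{-1}\!\big(\tfrac1m e^{(\lambda-\delta)s}\big)\right)-h\!\left(\big(\tfrac{h'}{\varphi'}\big)^{-1}\!\big(\tfrac1m e^{(\lambda-\delta)s}\big)\vee 0\right)\right)ds,
\end{equation*}
so that $F(\lambda_{Lag}(x))=x$ by \eqref{cas01}.

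The core of the proof is to show that $F$ is strictly increasing on $(0,\infty)$. Fix $s\ge 0$ and let $m$ increase; then $\tfrac1m e^{(\lambda-\delta)s}$ strictly decreases. Since $U$ is strictly concave and increasing, $U'$ is strictly decreasing, hence $(U')^{-1}$ is strictly decreasing, so $(U')^{-1}(\tfrac1m e^{(\lambda-\delta)s})$ strictly increases and, $U$ being increasing, $U\big((U')^{-1}(\tfrac1m e^{(\lambda-\delta)s})\big)$ strictly increases. Since $h$ is strictly convex increasing and $\varphi$ is strictly concave increasing with $\varphi'>0$, the ratio $h'/\varphi'$ is strictly increasing, hence $(h'/\varphi')^{-1}$ is increasing; composing with $\cdot\vee 0$ and then with the increasing $h$ shows that $h\big((h'/\varphi')^{-1}(\tfrac1m e^{(\lambda-\delta)s})\vee 0\big)$ is non-increasing in $m$. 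Thus the integrand of $F$ is strictly increasing in $m$ for every $s$, whence $F$ is strictly increasing. Since $F$ is strictly increasing and $F(\lambda_{Lag}(x))=x$, a purely order-theoretic argument (no continuity or differentiability of $F$ is needed) gives that $x\mapsto\lambda_{Lag}(x)$ is strictly increasing.

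It then remains to read off the conclusions. For (1): $R^*_s=(U')^{-1}\big(\tfrac{1}{\lambda_{Lag}(x)}e^{(\lambda-\delta)s}\big)$ is the composition of the strictly increasing map $x\mapsto\lambda_{Lag}(x)$, the strictly decreasing map $m\mapsto 1/m$, and the strictly decreasing $(U')^{-1}$, hence is increasing in $x$; similarly $A^*_s=(h'/\varphi')^{-1}\big(\tfrac{1}{\lambda_{Lag}(x)}e^{(\lambda-\delta)s}\big)\vee 0$ is the composition of $x\mapsto\lambda_{Lag}(x)$ (increasing), $m\mapsto 1/m$ (decreasing), $(h'/\varphi')^{-1}$ (increasing) and $\cdot\vee 0$, hence is decreasing in $x$. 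For (2): $\varphi$ is increasing and $A^*_t$ is decreasing in $x$, so $\varphi(A^*_t)$ is decreasing in $x$, while $R^*_t$ is increasing in $x$; therefore $H_t=\varphi(A^*_t)-R^*_t$ is decreasing in $x$.

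The main (and essentially only) obstacle is establishing the strict monotonicity of $F$, i.e. unwinding the chain of monotonicities coming from the strict concavity/convexity assumptions in Assumption~\ref{hyp}; once this is in hand, the passage to $\lambda_{Lag}(x)$ and then to $R^*,A^*,H_t$ is a routine composition argument. One should simply note that the truncation $\cdot\vee 0$ in $A^*$ makes the monotonicity of the effort, and hence of $H_t$, non-strict on the time interval where $A^*_t=0$, which is consistent with the statement as written.
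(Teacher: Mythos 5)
Your proposal is correct and follows essentially the same route as the paper: you establish the (strict) monotonicity of the map $\lambda_{Lag}\mapsto G(\lambda_{Lag})$ (your $F$ is exactly the paper's Lemma~\ref{mono}), deduce by the same order-theoretic contradiction that $x\mapsto\lambda_{Lag}(x)$ is increasing, and then read off the monotonicity of $R^*$, $A^*$ and $H_t$ by composition. Your explicit remark that the truncation $\cdot\vee 0$ only weakens the monotonicity of the effort (and hence of $H_t$) to non-strict where $A^*_t=0$ is a small but welcome precision that the paper glosses over.
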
 
To prove this proposition,  we need the following lemma.
\begin{lemme}\label{mono}Under  Assumption \ref{hyp},  
the function $$G:\lambda_{Lag}\mapsto\int_0^\infty e^{-{{{\lambda}} s}}\left(U\left((U')^{-1}(\frac{1}{\lambda_{Lag}}e^{(\lambda-\delta)s})\right)-h\left((\frac{h'}{\varphi'})^{-1}(\frac{1}{\lambda_{Lag}}e^{(\lambda-\delta)s})\right) \vee 0\right)ds.$$
is increasing with respect to $\lambda_{Lag}$ on $(0,\infty)$.
\end{lemme}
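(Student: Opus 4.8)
The plan is to reduce the statement to the pointwise monotonicity of a single function of one real variable. For $s\geq 0$ and $\lambda_{Lag}>0$ set $y(s,\lambda_{Lag}):=\frac{1}{\lambda_{Lag}}e^{(\lambda-\delta)s}$, which for each fixed $s$ is strictly decreasing in $\lambda_{Lag}$, and define
$$\phi(y):=U\big((U')^{-1}(y)\big)-h\big((\tfrac{h'}{\varphi'})^{-1}(y)\vee 0\big),\qquad y>0,$$
so that $G(\lambda_{Lag})=\int_0^\infty e^{-\lambda s}\,\phi\big(y(s,\lambda_{Lag})\big)\,ds$ (the integral being finite by the integrability conditions built into $\mathcal{E}$ in \eqref{ensfb}, which hold for the optimal controls of Proposition \ref{horinfini}). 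It then suffices to show that $\phi$ is strictly decreasing on $(0,\infty)$: for $0<\lambda_{Lag}^1<\lambda_{Lag}^2$ one has $y(s,\lambda_{Lag}^1)>y(s,\lambda_{Lag}^2)$ for every $s$, hence $e^{-\lambda s}\phi\big(y(s,\lambda_{Lag}^1)\big)<e^{-\lambda s}\phi\big(y(s,\lambda_{Lag}^2)\big)$ for a.e. $s$, and integrating yields $G(\lambda_{Lag}^1)<G(\lambda_{Lag}^2)$.

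To prove $\phi$ strictly decreasing I would treat its two summands separately, using only monotonicity (so no differentiation under the integral is required). Since $U$ is strictly concave, $U'$ is strictly decreasing, hence $(U')^{-1}$ is strictly decreasing on $(0,\infty)$; as $U$ is increasing, $y\mapsto U\big((U')^{-1}(y)\big)$ is strictly decreasing. For the effort term, $h'$ is positive and increasing while $\varphi'$ is positive and decreasing (Assumption \ref{hyp}), so $h'/\varphi'$ is strictly increasing, hence $(h'/\varphi')^{-1}$ is strictly increasing and $y\mapsto (h'/\varphi')^{-1}(y)\vee 0$ is non-decreasing; as $h$ is increasing with $h(0)=0$, $y\mapsto h\big((h'/\varphi')^{-1}(y)\vee 0\big)$ is non-decreasing, so its negative is non-increasing. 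The sum of a strictly decreasing function and a non-increasing function is strictly decreasing, which is the required claim. (Alternatively one can differentiate: on the region where the effort is interior, $\phi'(y)=y\,[(U')^{-1}]'(y)-h'(A^*(y))\,[(h'/\varphi')^{-1}]'(y)$ using $U'((U')^{-1}(y))=y$ and $(h'/\varphi')((h'/\varphi')^{-1}(y))=y$; both terms are $\leq 0$, the first strictly, and on the clipped region only the first term survives, so $\partial_y\phi<0$ and $\partial_{\lambda_{Lag}}G>0$ because $\partial_{\lambda_{Lag}}y<0$.)

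The argument is essentially self-contained once Proposition \ref{horinfini} is available, and I do not expect a genuine obstacle. The only points needing minor care are the kink of $a\mapsto a\vee 0$ — handled above by keeping the effort contribution merely non-decreasing rather than strictly increasing, the rent contribution already supplying strict monotonicity — and the finiteness of $G$ on the range of multipliers considered, which is inherited from the admissibility of the optimal rent and effort in $\mathcal{E}$.
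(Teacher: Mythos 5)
Your proof is correct and follows essentially the same route as the paper's: both arguments fix $s$, observe that $\lambda_{Lag}\mapsto \frac{1}{\lambda_{Lag}}e^{(\lambda-\delta)s}$ is decreasing, compose with the decreasing map $U\circ (U')^{-1}$ and the non-decreasing map $h\circ\big((h'/\varphi')^{-1}(\cdot)\vee 0\big)$ using the concavity/convexity assumptions, and integrate the resulting pointwise monotonicity. Your version is slightly more careful than the paper's (explicit handling of the kink at $A^*=0$, the source of strictness, and finiteness of the integral), but it is the same argument.
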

\begin{proof}
Since $\tau^* \neq 0$, $\lambda_{Lag}>0$, and we have $\lambda_{Lag}\mapsto \frac{1}{\lambda_{Lag}}e^{-(\lambda-\delta)s}$ is decreasing. The decrease of $U'$ yields that the function $\lambda_{Lag}\mapsto (U')^{-1} (\frac{1}{\lambda_{Lag}}e^{-(\lambda-\delta)s})$ is increasing. As $U$ is increasing, then, we have $\lambda_{Lag}\mapsto U\left((U')^{-1} (\frac{1}{\lambda_{Lag}}e^{-(\lambda-\delta)s})\right)$ is increasing. Using the properties of strict concavity of $\varphi$ and strict convexity of $h$, we have $(\frac{h'}{\varphi'})'>0$. As the function h is increasing, then $\lambda_{Lag}\mapsto h\left((\frac{h'}{\varphi'})^{-1}(\frac{1}{\lambda_{Lag}}e^{(\lambda-\delta)s}) \vee 0 \right)$ is non-increasing. We conclude that  $G$ is increasing with respect to $\lambda_{Lag}.$
\end{proof}\pf\\

\begin{proof}of Proposition  \ref{monoto}.\\
 It is clear that $\lambda_{Lag}$ depends on $x$  since it is solution of the equation $G(\lambda_{Lag})=x$. 
  We consider $x>x^{'}>0$. In this case, if $\lambda_{lag}(x)\leq \lambda_{lag}(x^{'})$, then  $G(\lambda_{lag}(x))\leq G(\lambda_{lag}(x^{'}))$ by Lemma \ref{mono}, which contradicts  that $x > x^{'}$. So we must have $\lambda_{lag}(x)>\lambda_{lag}(x^{'})$,
  and the function  $x\rightarrow \lambda_{Lag}(x)$ is increasing.
For a fixed $t$, we have $x\rightarrow \frac{1}{\lambda_{Lag}(x)}e^{(\lambda-\delta)t}$ is decreasing, and by { using the properties of concavity of $U$ and $\varphi$  and convexity of $h$, we get that  the rent is increasing and the optimal effort is decreasing with respect to $x$}.
 As $A^*$ and $R^*$ depend on $x$ through $\lambda_{Lag}$,  $x$ also impacts $H_t(x)=\varphi(A^*_t(x))-R^*_t(x)$. Therefore  we deduce that the function $H_t$ is decreasing with respect to $x$.
\end{proof}\pf\\

\noindent Figure \ref{figureH1}  illustrates  the monotonicity of $H$  with respect to $x$ obtained in Proposition \ref{monoto},  for the numerical example provided in Example \ref{exnum} with $\alpha= \beta=0.1$ and $p=\frac{1}{4}$.

    \begin{figure}[H]
        \includegraphics[width=14cm,height=10cm]{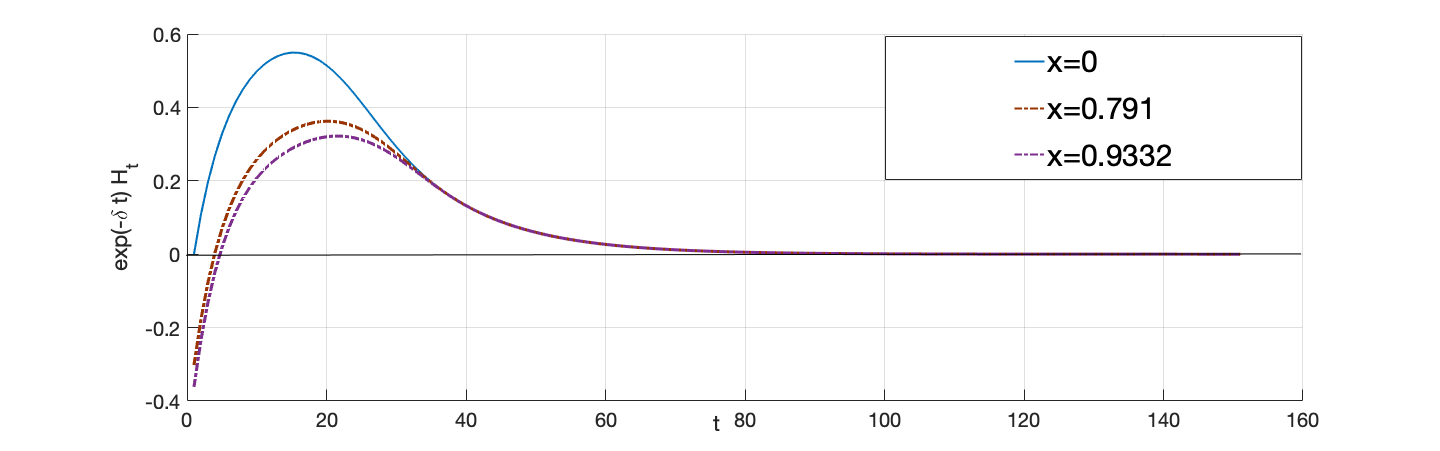}
       \caption{ $t\mapsto e^{-\delta t}H_t(x)$ for different $x$} \label{figureH1}
    \end{figure}

\section{ The second-best case}
In this section, we consider the  second-best case. In this situation of moral hazard, the Principal cannot observe the action chosen by the Agent, and can only control the
rent that she offers, the decision to stop the contract and the cost that follows. The trick to overcome this difficulty is to reformulate the optimization problems in terms of the Agent objective function $J^{Ag}(\Gamma,A)$. The methodology consists on deriving  the dynamics of the objective function of the Agent and
characterizing the incentive compatible contracts.
Then, we enlarge the set of admissible controls for the Principal by choosing them $\ff$-progressively measurable.
We solve the optimization problem of the Principal, which is a standard mixed optimal stopping/stochastic control problem on the large set of admissible controls.
Finally, we prove  that  at the optimum the optimal contract is  $\ff^X$-progressively measurable and so we solve the initial optimization problem of the Principal.

\subsection{The incentive compatible contract for the Agent}\label{section3-4}
For a fixed stopping time $\tau\in \mathcal{T}$, we introduce the following spaces  which are useful to characterize the structure of the incentive contract.
\begin{eqnarray*}
\mathcal{S}_\rho^2(\tau):&=&\{  {Y}~ \R\mbox{-valued,}~\ff\mbox{-progressively measurable continuous process such that}\\
& &
~~~~~~~~~~~~||{Y}||_{{\cal S}_\rho^2(\tau)} := \left(\esp^\pr \left[\Sup_{ 0\leq s\leq\tau}e^{(\rho-2\lambda) s} |{Y}_s|^2\right]\right)^{\frac{1}{2}}<\infty\},\\
{\cal H}_\rho^2(\tau):&=&\{{Z}~ \R\mbox{-valued,}~\ff\mbox{-progressively measurable process such that}\\
& &
~~~~~~~~~~~~||{Z}||_{{\cal H}_\rho^2(\tau)} := \left(\esp^\pr\left[ |\int_0^{\tau} e^{(\rho-2\lambda) s} |Z_s|^2ds|\right]\right)^{\frac{1}{2}}<\infty\}.
\end{eqnarray*}
The following lemma gives the dynamics of the Agent's objective function. 
\begin{lemme}\label{lemmeincentive} Suppose Assumption \ref{hyp}.
For any admissible contract and effort $(\Gamma, A)\in\mathcal{D}^P_\rho\times \mathcal{D}^{Ag}_\rho$,  there exists  $Z^A\in {\cal H}_\rho^2(\tau)$ such that the dynamics of the Agent's objective function $J_t^{Ag}(\Gamma,A)$ evolves according to  BSDE \eqref{Ii01} with random terminal condition
\begin{equation}\label{Ii01}
dJ_t^{Ag}(\Gamma,A)=-\left[-{{\lambda}} J_t^{Ag}(\Gamma,A)+U(R_t)+\psi(A_t,Z_t ^A)\right]dt+Z_t^A dX_t,~J_\tau^{Ag}(\Gamma,A)=U(\xi)\un_{\{\tau<\infty\}}
\end{equation}
where 
\begin{equation}\label{psii}
\psi(a,z):=-h(a)+{z}\frac{\varphi(a)}{\sigma}.
\end{equation}
\end{lemme}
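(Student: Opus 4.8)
The plan is to start from the definition \eqref{Objcons} of $J_t^{Ag}(\Gamma,A)$ and exhibit the process so as to apply a martingale representation argument. First I would introduce the \emph{gain process}
\[
M_t := \int_0^{t\wedge\tau} e^{-\lambda s}\bigl(U(R_s)-h(A_s)\bigr)ds + e^{-\lambda(t\wedge\tau)}J_{t\wedge\tau}^{Ag}(\Gamma,A),
\]
and observe that, by the tower property and the very definition of $J_t^{Ag}$, $M$ is an $\ff$-martingale (on $[0,\tau]$) under $\pr$; the integrability needed to make this rigorous is exactly what the admissibility conditions defining $\mathcal{D}^P_\rho$ and $\mathcal{D}^{Ag}_\rho$, together with the spaces $\mathcal S^2_\rho(\tau)$ and $\mathcal H^2_\rho(\tau)$, are designed to provide (Cauchy–Schwarz on the products $e^{-\lambda s}U(R_s)$ etc., using the weights $e^{(\rho-2\lambda)s}$). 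Since $\ff$ is the Brownian filtration, the martingale representation theorem gives a process $\tilde Z\in\mathcal H^2_\rho(\tau)$ with $dM_t = \tilde Z_t\,dW_t$ on $[0,\tau]$.

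Next I would differentiate $e^{-\lambda t}J_t^{Ag}$ by writing it as $M_t - \int_0^{t\wedge\tau}e^{-\lambda s}(U(R_s)-h(A_s))ds$, so that on $\{t<\tau\}$
\[
d\bigl(e^{-\lambda t}J_t^{Ag}(\Gamma,A)\bigr) = -e^{-\lambda t}\bigl(U(R_t)-h(A_t)\bigr)dt + \tilde Z_t\,dW_t .
\]
Undoing the discount factor via the product rule, $dJ_t^{Ag} = \lambda J_t^{Ag}\,dt + e^{\lambda t}d(e^{-\lambda t}J_t^{Ag})$, yields
\[
dJ_t^{Ag}(\Gamma,A) = \bigl[\lambda J_t^{Ag}(\Gamma,A) - U(R_t) + h(A_t)\bigr]dt + e^{\lambda t}\tilde Z_t\,dW_t .
\]
Now set $Z_t^A := e^{\lambda t}\tilde Z_t/\sigma$, so that $e^{\lambda t}\tilde Z_t\,dW_t = Z_t^A\,(\sigma\,dW_t) = Z_t^A\,(dX_t - \varphi(A_t)\,dt)$ using the dynamics \eqref{Sv} of $X$. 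Substituting and collecting the $dt$ terms turns the drift into $\lambda J_t^{Ag} - U(R_t) + h(A_t) - Z_t^A\varphi(A_t)$, which is precisely $-\bigl[-\lambda J_t^{Ag} + U(R_t) + \psi(A_t,Z_t^A)\bigr]$ with $\psi(a,z) = -h(a) + z\varphi(a)/\sigma$ as in \eqref{psii} — matching \eqref{Ii01}. The terminal condition $J_\tau^{Ag}(\Gamma,A) = U(\xi)\un_{\{\tau<\infty\}}$ is immediate from \eqref{Objcons}. Finally I would check $Z^A\in\mathcal H^2_\rho(\tau)$: this is where one must be a little careful, since the rescaling by $e^{\lambda t}/\sigma$ changes the weight, so I would verify that $\esp[\int_0^\tau e^{(\rho-2\lambda)s}|Z_s^A|^2ds] = \sigma^{-2}\esp[\int_0^\tau e^{\rho s}|\tilde Z_s|^2 ds]$ is finite using the bound on $\tilde Z$ coming from the $L^2$ bound on $M_\tau$ and the growth controls in the admissibility sets.

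The main obstacle I anticipate is the \emph{integrability bookkeeping}: ensuring that $M$ is genuinely a (square-integrable) martingale on $[0,\tau]$ with $M_\tau\in L^2(\pr)$ so that the representation theorem applies and $\tilde Z$ lands in the right space, and then tracking how the exponential weights transform under the $e^{\lambda t}$ rescaling. All the conditions $\esp[\int_0^\infty e^{(\rho-2\lambda)s}|h(A_s)|^2ds]<\infty$, $\esp[\int_0^\infty e^{(\rho-2\lambda)s}|U(R_s)|^2ds]<\infty$, $\esp[e^{(\rho-2\lambda)\tau}U^2(\xi)\un_{\{\tau<\infty\}}]<\infty$ in the definitions of $\mathcal D^{Ag}_\rho$ and $\mathcal D^P_\rho$ are tailored exactly for this; the argument is essentially an application of Cauchy–Schwarz together with the elementary estimate $e^{-2\lambda s}\le e^{(\rho-2\lambda)s}$ for $s$ bounded (or one keeps the discount inside and matches weights directly), so the rest of the proof is routine stochastic calculus once this is in place.
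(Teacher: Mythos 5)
Your proposal follows essentially the same route as the paper: define the martingale $M_t=\esp\big[\int_0^\tau e^{-\lambda s}(U(R_s)-h(A_s))ds+e^{-\lambda\tau}U(\xi)\un_{\{\tau<\infty\}}\mid\F_t\big]$, invoke the Brownian martingale representation theorem, undo the discounting by setting $Z^A_t=e^{\lambda t}\hat Z^A_t$, substitute $dW_t=\sigma^{-1}(dX_t-\varphi(A_t)dt)$, and settle the integrability via Cauchy--Schwarz together with the boundedness of $\varphi$ and the $\mathcal S^2_\rho(\tau)\times\mathcal H^2_\rho(\tau)$ estimates (which the paper imports from Proposition 3.1 of Hajjej et al.). The only point of divergence is a factor of $\sigma$ in your final identification of the drift with $-[-\lambda J^{Ag}+U(R)+\psi(A,Z^A)]$ (with your normalization $Z^A=e^{\lambda t}\tilde Z/\sigma$ the correction term is $-Z^A\varphi(A_t)$ rather than $-Z^A\varphi(A_t)/\sigma$), but the paper's own bookkeeping of $\sigma$ between its choice $Z^A=e^{\lambda t}\hat Z^A$ and the form of \eqref{Ii01} is equally loose, so this is immaterial to the substance of the argument.
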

\begin{proof}
  We fix $(\Gamma, A)\in\mathcal{D}^P_\rho\times \mathcal{D}^{Ag}_\rho$.
  We define the martingale $(M_t)_{0\leq t\leq \tau}$ by
\begin{eqnarray}
  M_t:=\esp \left[\int_0^\tau e^{-{{{\lambda}}s}}(U(R_s)-h(A_s))ds+e^{-{{\lambda}}\tau}U(\xi){\un_{\{\tau<\infty\}}}|\F_t\right].
\end{eqnarray}
As $(M_t)_{0\leq t\leq \tau}$ is square integrable, then by the martingale representation theorem, there exists $(\hat Z^A_t)_{0\leq t\leq \tau}$ $\ff$-adapted
process such that $\esp \left[\int_0^\tau| \hat Z^A_s|^2ds\right]<\infty$ and $M_t=M_0+\int_0^t Z_s^A dW_s$, which implies  for $t \in \lbr 0, \tau \lbr$ 
\begin{eqnarray}
 e^{-\lambda t}J_t^{Ag}(\Gamma,A) =J_0^{Ag}(\Gamma,A)+\int_0^t  e^{-{{{\lambda}}s}}(U(R_s)-h(A_s))ds+\int_0^t {\hat Z}_s^A dW_s.
\end{eqnarray}
We put ${ Z}_t^A=e^{\lambda t}{\hat Z}_t^A$ for all $0\leq t\leq \tau$. From Proposition 3.1 in Hajjej et al. \cite{hajjej2019optimal}, 
$(J_t^{Ag}(\Gamma,A),{ Z}_t^A)_{0\leq t\leq \tau}\in \mathcal{S}_\rho^2(\tau)\times{\cal H}_\rho^2(\tau) $.
By using Cauchy Schwarz inequality, we have 
\begin{eqnarray}
\int_0^t|{\hat Z}_s^A\frac{\varphi(A_s)}{\sigma}| ds &\leq & \sqrt{\int_0^te^{\rho s}|{\hat Z}_s^A|^2ds} \sqrt{ \int_0^t e^{-\rho s}|\frac{\varphi(A_s)}{\sigma}|^2 ds}\\
&\leq&  C \sqrt{\int_0^te^{\rho s}|{\hat Z}_s^A|^2ds} <\infty,\nonumber
\end{eqnarray}
where the second inequality is obtained from the boundedness of $\varphi$ (see  Assumption \ref{hyp}) and $C$ is a positive constant. 
As the integral $\int_0^t{\hat Z}_s^A\frac{\varphi(A_s)}{\sigma} ds$ is well-defined, and from the definition of the output process (\ref{Sv}), we have
$\int_0^t {\hat Z}_s^A dW_s= \int_0^t \frac{{\hat Z}_s^A}{\sigma} dX_s-\int_0^t\frac{{\hat Z^A}_s\varphi(A_s)}{\sigma} ds$, leading to 
 BSDE(\ref{Ii01}). 
\end{proof}
\begin{remarque}
The integral with respect to $X$ appears naturally in this second-best case. It is well-defined for all $\Gamma\in \mathcal{D}^P_\rho$ and  $A\in \mathcal{D}^{Ag}_\rho$. It has a sense for the Principal since she observes only the output process and could not make the difference between the impact of the effort and the  Brownian motion. 
\end{remarque}
The associated Backward Stochastic Differential Equation (\ref {Ii01}) can be written as
\begin{eqnarray}\label{BSDE-3}
 \left\{
    \begin{array}{ll}
       dY_t &=-\left(-{\lambda}Y_t+U(R_t)+\psi(A_t,Z_t^A)\right)dt+ Z_t^AdX_t \\
        Y_\tau &=U(\xi)\un_{\{\tau<\infty\}}
    \end{array}
\right. 
\end{eqnarray}
Considering the discounted quantities 
 $$(\tilde{Y}_t,\tilde{Z}_t^A)=(e^{-\lambda t}Y_t,e^{-\lambda t} Z_t^A),~ ~dt\otimes d\pr ~a.e,\,t\in\lbr 0,\tau \lbr,$$
 allows us to get rid of the $Y$-term in the drift of BSDE \eqref{BSDE-3}: 
$(\tilde{Y},\tilde{Z}^A)$ satisfies  BSDE \eqref{BSDE2}
\begin{eqnarray}\label{BSDE2}
 \left\{
    \begin{array}{ll}
       d\tilde{Y}_t &=-\left(\tilde{U}(R_t)+\tilde{\psi}(A_t,\tilde{Z}_t^A)\right)dt+\tilde{Z}^A_tdX_t, \\
        \tilde{Y}_\tau &={  \tilde U(\xi)}{\un_{\{\tau<\infty\}}},
    \end{array}
\right. 
\end{eqnarray}
where \vspace{-0.5cm}
\begin{eqnarray}\label{psiU}
 \left\{
    \begin{array}{ll}
           {\tilde{h}(A_t)}&:=e^{-\lambda t}h(A_t),\\
                   \tilde{U}(R_t) &:=e^{-\lambda t}U(R_t),\\ \tilde{U}(\xi) &:=e^{-\lambda t}U(\xi)\\
      \tilde{\psi}(A_t,{\tilde{ Z}}_t^A) &:=-{\tilde{h}(A_t)}+ \tilde{Z}_t^A\frac{\varphi(A_t)}{\sigma}.\\
    \end{array}
\right. 
\end{eqnarray}
The next result is a comparison theorem for BSDE (\ref{BSDE-3}), where the stochastic integral is defined with respect to the semimartingale $X$. This is the difference with the comparison theorem proved in Hajjej et al. \cite{hajjej2019optimal}.
\begin{proposition}\label{rxisunicom}
   Suppose Assumption \ref{hyp}.\\
 1. There exists a unique $(Y, Z^A)\in \mathcal{S}_\rho^2(\tau)\times{\cal H}_\rho^2(\tau)$ solving the BSDE (\ref{BSDE-3}).\\
 2.  {Let $\Gamma=(R,\tau,\xi)\in \mathcal{D}^P_\rho$} and  $A^i\in \mathcal{D}^{Ag}_\rho$ for $i=1,2$.
 Let $({Y}^i,{Z}^i)\in \mathcal{S}_\rho^2(\tau)\times{\cal H}_\rho^2(\tau)$ be the solution of the following BSDE
\begin{eqnarray}\label{BSDE22}
 \left\{
    \begin{array}{ll}
       d{Y}_t^i &=-\left(-\lambda Y_t+{U}(R_t)+{\psi}(A_t^i,{Z}_t^i)\right)dt+{Z}^i_tdX_t, \\
        {Y}_\tau^i &=U(\xi){\un_{\{\tau<\infty\}}}.
    \end{array}
\right. 
\end{eqnarray}
{\begin{equation}\label{comp1}
\mbox{ If }{\psi}(A_t^1,{Z}_t^2)\leq {\psi}(A_t^2,{Z}_t^2)~dt\otimes d\mathbb{P}\,a.e.,~\mbox{for all}~t\in\lbr 0,\tau \lbr,
\end{equation}}
then
$${Y}_t^1\leq {Y}_t^2~dt\otimes d\mathbb{P}\,a.e.,~\mbox{for all}~t\in\lbr 0,\tau \lbr.$$
\end{proposition}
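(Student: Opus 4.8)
The plan is to prove the two assertions in turn, reducing both to known facts about BSDEs with a Lipschitz driver by first passing to the discounted quantities $(\tilde Y,\tilde Z^A)$ of \eqref{BSDE2}, and then handling the stochastic integral against the semimartingale $X$ via the decomposition $dX_t=\varphi(A_t)\,dt+\sigma\,dW_t$. For existence and uniqueness (part~1), I would rewrite \eqref{BSDE-3} as a BSDE driven by $W$: substituting $dX_t=\varphi(A_t)\,dt+\sigma\,dW_t$ turns the generator into $f(t,y,z):=-\lambda y+U(R_t)+\psi(A_t,z)+z\varphi(A_t)/\sigma=-\lambda y+U(R_t)-h(A_t)+2z\varphi(A_t)/\sigma$ (more precisely, after rescaling $z\mapsto\sigma z$ so that the martingale part is $\sigma z\,dW_t$, the generator is Lipschitz in $(y,z)$ with constants $\lambda$ and $2\|\varphi\|_\infty/\sigma$, uniformly in $\omega,t$, since $\varphi$ is bounded by Assumption~\ref{hyp}). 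The data $U(R_t)-h(A_t)$ lies in the right weighted $L^2$-space because $\Gamma\in\mathcal D^P_\rho$ and $A\in\mathcal D^{Ag}_\rho$, and $U(\xi)\un_{\{\tau<\infty\}}$ is square-integrable in the weighted sense for the same reason; the finite horizon is the stopping time $\tau$. Hence the classical existence/uniqueness theorem for Lipschitz BSDEs with random terminal time (in the weighted spaces $\mathcal S_\rho^2(\tau)\times\mathcal H_\rho^2(\tau)$, exactly as in Proposition~3.1 of Hajjej et al.~\cite{hajjej2019optimal}) applies and gives the unique solution $(Y,Z^A)$; one then translates back to the $X$-integral form, which is legitimate because the $\int Z_s\varphi(A_s)/\sigma\,ds$ term is absolutely convergent (same Cauchy–Schwarz estimate as in the proof of Lemma~\ref{lemmeincentive}).

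For the comparison result (part~2), I would set $\delta Y_t:=Y_t^1-Y_t^2$ and $\delta Z_t:=Z_t^1-Z_t^2$ and write the BSDE solved by $(\delta Y,\delta Z)$. Subtracting the two equations \eqref{BSDE22}, and again using $dX_t=\varphi(A_t)\,dt+\sigma\,dW_t$ — being careful that the drift $\varphi(A_t)$ is the \emph{same} in both equations, since the Principal's output $X$ is a single fixed process and only the generator sees $A^i$ — gives
\begin{equation*}
d(\delta Y_t)=-\bigl[-\lambda\,\delta Y_t+\psi(A_t^1,Z_t^1)-\psi(A_t^2,Z_t^2)+(Z_t^1-Z_t^2)\tfrac{\varphi(A_t)}{\sigma}\cdot 0\bigr]dt+\delta Z_t\,dX_t,\qquad \delta Y_\tau=0 .
\end{equation*}
Now I decompose $\psi(A_t^1,Z_t^1)-\psi(A_t^2,Z_t^2)=\bigl[\psi(A_t^1,Z_t^1)-\psi(A_t^1,Z_t^2)\bigr]+\bigl[\psi(A_t^1,Z_t^2)-\psi(A_t^2,Z_t^2)\bigr]$. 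The first bracket equals $(Z_t^1-Z_t^2)\varphi(A_t^1)/\sigma=\beta_t\,\delta Z_t$ with $\beta_t:=\varphi(A_t^1)/\sigma$ bounded, and the second bracket is $\le 0$ by the hypothesis \eqref{comp1}. So $\delta Y$ solves a linear BSDE with bounded coefficients and a nonpositive "source" term plus terminal value $0$. The standard trick is to introduce the exponential/stochastic discount factor: let $\Lambda_t$ be the solution of $d\Lambda_t=\Lambda_t(-\lambda\,dt+(\beta_t/\sigma)\,dX_t - \dots)$ — more transparently, multiply by $e^{-\lambda t}$ to kill the $\lambda\,\delta Y$ term (giving $\delta\tilde Y$ as in \eqref{BSDE2}) and then by the Doléans exponential $\mathcal E_t:=\exp(\int_0^t\beta_s/\sigma\,dX_s-\tfrac12\int_0^t\beta_s^2/\sigma^2 d\langle X\rangle_s)$, so that $d(\mathcal E_t\delta\tilde Y_t)=-\mathcal E_t(\text{nonpositive term})\,dt+(\text{true martingale increment})$. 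Taking conditional expectation $\esp[\,\cdot\,|\mathcal F_t]$ between $t$ and $\tau$, using $\delta\tilde Y_\tau=0$ and that the stochastic integral is a genuine martingale (this uses $\beta$ bounded and the $\mathcal H^2_\rho$-integrability of $\delta Z$, together with $\varphi$ bounded to control $\mathcal E$), yields $\mathcal E_t\delta\tilde Y_t\le 0$, hence $\delta Y_t\le 0$ $dt\otimes d\pr$-a.e.\ on $\lbr 0,\tau\lbr$.

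The main obstacle is the stochastic integral being written against $X$ rather than $W$: one must justify carefully that all the Girsanov-type manipulations (defining the Doléans exponential $\mathcal E$, verifying it is a true martingale, and showing the localized stochastic integrals are uniformly integrable martingales after the change of measure) are valid in the weighted spaces $\mathcal S_\rho^2(\tau)$, $\mathcal H_\rho^2(\tau)$ and with the random terminal time $\tau$. The saving grace is the boundedness of $\varphi$ from Assumption~\ref{hyp}, which makes $\beta_t=\varphi(A_t^1)/\sigma$ bounded; this is exactly what makes $\mathcal E$ a true martingale (Novikov's criterion is trivially satisfied since $\langle X\rangle_t=\sigma^2 t$) and lets one reduce everything, via the same Cauchy–Schwarz estimate used in Lemma~\ref{lemmeincentive}, to the classical Lipschitz-BSDE theory already invoked in \cite{hajjej2019optimal}. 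I would also note that the only genuinely new point compared to \cite{hajjej2019optimal} is that the driving martingale is $\sigma\,dW$ hidden inside $dX$; once expressed in terms of $W$, both parts are standard, and the change of variable is legitimate precisely because $\int_0^\tau |Z_s|\,|\varphi(A_s)|/\sigma\,ds<\infty$.
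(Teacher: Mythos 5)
Your proposal is correct in substance, and the key decomposition in part 2 --- writing $\psi(A^1,Z^1)-\psi(A^2,Z^2)$ as $[\psi(A^1,Z^1)-\psi(A^1,Z^2)]+[\psi(A^1,Z^2)-\psi(A^2,Z^2)]$ and using \eqref{comp1} on the second bracket --- is exactly the paper's. Where you diverge is in how each part is closed. For existence/uniqueness the paper does not invoke the general Lipschitz-BSDE theorem at all: existence is obtained by \emph{identifying} the solution as $Y_t=J^{Ag}_t(\Gamma,A)$ with $Z^A$ produced by martingale representation (Lemma \ref{lemmeincentive}), and uniqueness follows because, for two solutions, the $ds$-terms coming from $\psi$ cancel exactly against the finite-variation part of $\int(\tilde Z^1-\tilde Z^2)\,dX$, leaving a bare $dW$-integral whose conditional expectation vanishes. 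The same exact cancellation is what closes the comparison argument: after the decomposition, the leftover linear term $\int(\tilde Z^1-\tilde Z^2)\varphi(A^1)/\sigma\,ds$ combines with $-\int(\tilde Z^1-\tilde Z^2)\,dX$ to give $-\int(\tilde Z^1-\tilde Z^2)\,dW$, so no Dol\'eans exponential and no linearization change of measure are needed --- one just takes conditional expectations. Your Girsanov-type route is heavier but more robust: it works whatever bounded drift actually sits inside $dX$ (you correctly flag that $X$ is a single fixed process, a point the paper's own computation glosses over by implicitly matching the drift of $X$ with $\varphi(A^1)$), and Novikov is free since $\varphi$ is bounded; the price is the extra verification that $\mathcal E\cdot\int\delta\tilde Z\,dW$ is a true martingale in the weighted spaces, which you acknowledge. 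Two small slips that do not affect validity: when you substitute $dX=\varphi(A)\,dt+\sigma\,dW$ the term $Z\varphi(A)\,dt$ enters the generator with a \emph{minus} sign (you wrote $+z\varphi(A_t)/\sigma$), so your stated Lipschitz constant is off, though the generator remains affine in $z$ with a bounded coefficient, which is all that matters; and the ``$\cdot\,0$'' in your displayed equation for $\delta Y$ is a typo that should simply be the absence of any extra drift term.
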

\begin{proof}
 1. The existence is immediate since $Y_t=J_t^{Ag}(\Gamma,A)$ for all $0\leq t\leq \tau$ and the process $Z^A$ appears naturally
 by applying the martingale representation theorem.
 As $\Gamma=(R,\tau,\xi)\in \mathcal{D}^P_\rho$ and  $A\in \mathcal{D}^{Ag}_\rho$, then $Y\in \mathcal{S}^2_\rho(\tau)$ and $Z^A\in {\cal H}^2_\rho(\tau)$.
 For the unicity, we fix $0\leq t\leq \tau$. We assume that there exists two solutions $(Y^1, Z^{1,A})$ and $(Y^2, Z^{2,A})$ to the BSDE (\ref{BSDE-3}), then
  \begin{eqnarray}
    {\tilde Y}^1_t-{\tilde Y}^2_t&=&\int_t^\tau\frac{\varphi(A_s)({\tilde Z}^{1,A}_s-{\tilde Z}^{2,A}_s)}{\sigma}ds-\int_t^\tau({\tilde Z}^{1,A}_s-{\tilde Z}^{2,A}_s)dX_s\\
    &=&\int_t^\tau(Z^{1,A}_s-Z^{2,A}_s)dW_s.
  \end{eqnarray}
  Taking the conditional expectation, we obtain $Y_t^1=Y_t^2$  $dt\otimes d\pr$ a.e. for all $0\leq t\leq \tau$, and so
  $Z_t^1=Z_t^2$ $dt\otimes d\pr$ a.e. for all $0\leq t\leq \tau$.\\
  2. From (\ref{BSDE22}), we have
\begin{eqnarray*}
\tilde{Y}^1_t-\tilde{Y}^2_t&=&\int_t^{\tau}\left(\tilde{\psi}(A^1_s,\tilde{Z}_s^{1})-\tilde{\psi}(A^2_s,\tilde{Z}_s^{2})\right)ds-\int_t^{\tau}\left(\tilde{Z}_s^1-\tilde{Z}_s^2\right)dX_s+\tilde{Y}_{\tau}^1-\tilde{Y}_{\tau}^2\\
&=&\int_t^{\tau}\left(\tilde{\psi}(A^1_s,\tilde{Z}_s^{1})-\tilde{\psi}(A^2_s,\tilde{Z}_s^{2})+\tilde{\psi}(A^1_s,\tilde{Z}_s^{2})-\tilde{\psi}(A^1_s,\tilde{Z}_s^{2})\right)ds
-\int_t^{\tau}\left(\tilde{Z}_s^1-\tilde{Z}_s^2\right)dX_s\\
&\leq& \int_t^{\tau}\left(\tilde{\psi}(A_s^1,\tilde{Z}_s^1)-\tilde{\psi}(A_s^1,\tilde{Z}_s^2)\right)ds
-\int_t^{\tau}\left(\tilde{Z}_s^1-\tilde{Z}_s^2\right)dX_s,
\end{eqnarray*}
where the last inequality is obtained by using inequalities (\ref{comp1}) and (\ref{psiU}). We obtain
\begin{eqnarray*}
\tilde{Y}^1_t-\tilde{Y}^2_t&\leq& \int_t^{\tau}(\tilde{Z}_s^1-\tilde{Z}_s^2)\frac{\varphi(A_s^1)}{\sigma}ds
-\int_t^{\tau}\left(\tilde{Z}_s^1-\tilde{Z}_s^2\right)dX_s\\
&=&-\int_t^{\tau}\left(\tilde{Z}_s^1-\tilde{Z}_s^2\right)dW_s.
\end{eqnarray*}
By taking the {conditional} expectation under $\pr$, the stochastic integral $\int_t^{\tau}(\tilde{Z}_s^1-\tilde{Z}_s^2)dW_s$ vanishes. {As ${Y}^i$ is $\mathbb{F}$-progressively measurable process,}   we obtain $${Y}_t^1\leq {Y}_t^2 \,dt\otimes d\pr\,a.e.,\,~t\in\lbr 0,\tau\lbr.$$ 
\end{proof}
\pf \\
The next lemma is useful to parametrize the optimal effort as a function of the process $Z$.
\begin{lemme}\label{lemme01-3} [Lemma 3.5 in \cite{hajjej2019optimal}]
Suppose  Assumption \ref{hyp}. Let $z$ be a real number and  define $A^*(z):=\arg\Max_{a\geq0}\psi(a,z).$ If $z>\sigma\frac{h'(0)}{\varphi'(0)},$ then $A^*(z)=(\frac{h'}{\varphi'})^{-1}(\frac{z}{\sigma})$ and if $z\leq\sigma\frac{h'(0)}{\varphi'(0)},$ then $A^*(z)=0.$
{Moreover, $A^*$ is  a bijection from $\{0\} \cup(\sigma\frac{h'}{\varphi'}(0),\infty)$ to $[0,\infty).$ }
\end{lemme}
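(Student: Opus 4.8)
The plan is to analyze the concave function $a \mapsto \psi(a,z) = -h(a) + z\varphi(a)/\sigma$ on $[0,\infty)$ and locate its maximizer via the first-order condition. First I would note that, under Assumption \ref{hyp}, $h$ is strictly convex and $\varphi$ is strictly concave, so $\psi(\cdot,z)$ is strictly concave (for $z>0$); its derivative is $\partial_a\psi(a,z) = -h'(a) + z\varphi'(a)/\sigma$. Since $h'$ is increasing and $\varphi'$ is decreasing, the map $a \mapsto h'(a)/\varphi'(a)$ is strictly increasing, continuous, starts at $h'(0)/\varphi'(0)>0$, and (using $\varphi$ bounded so $\varphi'(a)\to 0$, while $h'(a)$ stays bounded away from $0$) tends to $+\infty$. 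Hence $h'/\varphi'$ is a strictly increasing bijection from $[0,\infty)$ onto $[h'(0)/\varphi'(0),\infty)$, with a well-defined continuous inverse $(h'/\varphi')^{-1}$ on that range.

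Next I would split according to the sign of $\partial_a\psi(0,z) = -h'(0) + z\varphi'(0)/\sigma$. If $z \le \sigma h'(0)/\varphi'(0)$, then $\partial_a\psi(0,z)\le 0$; combined with strict concavity this forces $\partial_a\psi(a,z)<0$ for all $a>0$, so $\psi(\cdot,z)$ is decreasing on $[0,\infty)$ and the maximum over $a\ge 0$ is attained at $A^*(z)=0$. If instead $z > \sigma h'(0)/\varphi'(0)$, then $\partial_a\psi(0,z)>0$, and since $\partial_a\psi(a,z)\to -\infty$ as $a\to\infty$ (again because $h'$ is increasing and $z\varphi'(a)/\sigma\to 0$), there is a unique interior zero, namely the $a$ solving $h'(a)/\varphi'(a) = z/\sigma$, i.e. $A^*(z) = (h'/\varphi')^{-1}(z/\sigma)$; strict concavity guarantees this critical point is the unique global maximizer on $[0,\infty)$.

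Finally, for the bijection claim, I would observe from the two cases above that $A^*$ maps $\{0\}\cup(\sigma h'(0)/\varphi'(0),\infty)$ into $[0,\infty)$: the point $z=0$ (and more generally anything in the omitted interval, but the stated domain only keeps $0$) goes to $0$, and the half-line $(\sigma h'(0)/\varphi'(0),\infty)$ is mapped by $z\mapsto (h'/\varphi')^{-1}(z/\sigma)$ continuously and strictly increasingly onto $(0,\infty)$, since $z/\sigma$ ranges over $(h'(0)/\varphi'(0),\infty)$ exactly the range on which $(h'/\varphi')^{-1}$ is defined and takes values in $(0,\infty)$. Injectivity on the stated domain then follows because $0$ is attained only at $z=0$ while the rest is strictly monotone, and surjectivity onto $[0,\infty)$ follows by adjoining the value $0$. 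The only mildly delicate point — the main obstacle — is justifying the limiting behavior $h'(a)/\varphi'(a)\to\infty$ as $a\to\infty$, which is where the boundedness of $\varphi$ (hence $\varphi'(a)\to 0$) together with $h'$ increasing and $h'(0)>0$ is essential; everything else is a routine first-order-condition argument for a strictly concave one-dimensional function. Since this is quoted as Lemma 3.5 of \cite{hajjej2019optimal}, I would ultimately just cite that reference, but the above is the self-contained argument.
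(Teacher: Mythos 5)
Your argument is correct and complete; the paper itself gives no proof of this lemma (it is simply imported as Lemma 3.5 of \cite{hajjej2019optimal}), and your first-order-condition analysis of the strictly concave map $a\mapsto\psi(a,z)$ is the standard argument one would expect there, including the one genuinely non-trivial point, namely that $\varphi$ bounded forces $\varphi'(a)\to 0$ and hence $h'/\varphi'$ is a bijection from $[0,\infty)$ onto $[h'(0)/\varphi'(0),\infty)$. The only slight imprecision is in the case $z\leq \sigma h'(0)/\varphi'(0)$: for $z\leq 0$ the function $\psi(\cdot,z)$ need not be concave (as you yourself note parenthetically), so the phrase ``combined with strict concavity'' does not literally apply there; but the conclusion is immediate anyway since $\partial_a\psi(a,z)=-h'(a)+z\varphi'(a)/\sigma<0$ for all $a\geq 0$ when $z\leq 0$, so $A^*(z)=0$ without any concavity argument.
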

The following proposition gives the  bijection between the process  $(Z_t^A)_{t\geq0}$ and the candidate for optimal effort $ (A^*_t)_{t\geq 0}.$ 
 \begin{proposition}\label{exitenceastar} [ Proposition 3.6 in \cite{hajjej2019optimal}]
Let $A\in\mathcal{D}^{Ag}_\rho.$ There exists a bijection between the process  $(Z_t^A)_{t\geq0}$ and the optimal effort $ (A^*_t)_{t\geq 0}.$ The bijection is given by
      $$A^*_t=A^*(Z_t^A)=(\frac{h'}{\varphi'})^{-1}(\frac{Z_t^A}{\sigma})\un_{\{Z_t^A>0\}},$$
      {or equivalently}
      $$ Z_t^A= (\sigma\frac{h'}{\varphi'})(A^*(Z_t^A))\un_{\{A^*(Z_t^A)>0\}}.$$          
  \end{proposition}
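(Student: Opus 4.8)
The plan is to derive the statement as a pathwise consequence of Lemma \ref{lemme01-3}. Fix $A\in\D^{Ag}_\rho$ and let $Z^A\in\cH_\rho^2(\tau)$ be the process furnished by Lemma \ref{lemmeincentive}. For $dt\otimes d\pr$-almost every $(t,\omega)$, I would apply Lemma \ref{lemme01-3} with the real number $z=Z^A_t(\omega)$: it states that the maximiser $A^*(z)=\arg\Max_{a\geq0}\psi(a,z)$ equals $(h'/\varphi')^{-1}(z/\sigma)$ when $z>\sigma\frac{h'}{\varphi'}(0)$ and equals $0$ otherwise. Declaring $A^*_t:=A^*(Z^A_t)$ then produces the first displayed formula $A^*_t=(h'/\varphi')^{-1}(Z^A_t/\sigma)\un_{\{Z^A_t>0\}}$, once one adopts the convention that $(h'/\varphi')^{-1}$ is extended by $0$ below $\frac{h'}{\varphi'}(0)$, so that the threshold $\sigma\frac{h'}{\varphi'}(0)$ of Lemma \ref{lemme01-3} and the indicator $\un_{\{Z^A_t>0\}}$ in the statement become consistent. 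Making this reconciliation precise (equivalently, handling the ``dead zone'' where $Z^A_t\in(0,\sigma\frac{h'}{\varphi'}(0)]$) is the step I expect to require the most care.

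For the bijection and the inverse relation $Z^A_t=(\sigma\frac{h'}{\varphi'})(A^*(Z^A_t))\un_{\{A^*(Z^A_t)>0\}}$, I would invoke the last assertion of Lemma \ref{lemme01-3}: $A^*$ restricted to $D:=\{0\}\cup(\sigma\frac{h'}{\varphi'}(0),\infty)$ is a bijection onto $[0,\infty)$. The ingredient to check is that $h'/\varphi'$ is a strictly increasing homeomorphism of $[0,\infty)$ onto $[\frac{h'}{\varphi'}(0),\infty)$: strict monotonicity comes from Assumption \ref{hyp}, since $h'$ is positive and increasing ($h$ strictly convex) while $\varphi'$ is positive and decreasing ($\varphi$ strictly concave); and the range is unbounded because the boundedness of $\varphi$ forces $\varphi'(a)\to0$ as $a\to\infty$ whereas $h'(a)\geq h'(0)>0$, so $(h'/\varphi')(a)\to\infty$. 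Consequently $(h'/\varphi')^{-1}(\cdot/\sigma)$ is well defined on $[\sigma\frac{h'}{\varphi'}(0),\infty)$ with inverse $a\mapsto\sigma\frac{h'}{\varphi'}(a)$, the two displayed formulas are mutually inverse on the range of $Z^A$, and together with $A^*(0)=0$ this gives the announced bijection.

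Finally I would record the measurability and admissibility needed to make this a bijection between genuine elements of the relevant spaces: $z\mapsto A^*(z)$ is Borel (continuous on $D$ and piecewise explicit), and $Z^A$ is $\ff$-progressively measurable by Lemma \ref{lemmeincentive}, hence $A^*_t=A^*(Z^A_t)$ is $\ff$-progressively measurable and nonnegative; and using the boundedness of $\varphi$ together with $Z^A\in\cH_\rho^2(\tau)$ one checks the square-integrability conditions defining $\D^{Ag}_\rho$, so $A^*\in\D^{Ag}_\rho$. Apart from the reconciliation of the two descriptions of the switching set for $Z^A$ flagged above, everything reduces to a pointwise application of Lemma \ref{lemme01-3} and elementary monotonicity arguments under Assumption \ref{hyp}.
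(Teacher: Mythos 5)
The paper itself gives no proof of this proposition: it is imported verbatim as Proposition 3.6 of \cite{hajjej2019optimal}, just as Lemma \ref{lemme01-3} is imported as Lemma 3.5 there, so there is no in-paper argument to compare yours against. Your reconstruction is nonetheless correct and is the natural one: apply Lemma \ref{lemme01-3} pathwise at $z=Z_t^A(\omega)$, check that $h'/\varphi'$ is a strictly increasing homeomorphism from $[0,\infty)$ onto $[\frac{h'}{\varphi'}(0),\infty)$ (your observation that boundedness of $\varphi$ forces $\varphi'(a)\to 0$ while $h'(a)\geq h'(0)>0$ is exactly what surjectivity requires), and conclude that the two displayed maps are mutual inverses between $[0,\infty)$ and $\{0\}\cup(\sigma\frac{h'}{\varphi'}(0),\infty)$. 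You are also right to flag that the statement as printed is loose: the indicator $\un_{\{Z_t^A>0\}}$ does not match the threshold $\sigma\frac{h'}{\varphi'}(0)>0$ of Lemma \ref{lemme01-3}, and $z\mapsto A^*(z)$ is injective only after restricting $z$ to $\{0\}\cup(\sigma\frac{h'}{\varphi'}(0),\infty)$ --- which is precisely the range of the reverse map $a\mapsto \sigma\frac{h'}{\varphi'}(a)\un_{\{a>0\}}$, so the ``bijection'' must be read on that restricted set, as the final sentence of Lemma \ref{lemme01-3} already indicates. The one overstatement is your closing claim that $A^*(Z)\in\D^{Ag}_\rho$ follows from the boundedness of $\varphi$ and $Z^A\in\cH^2_\rho(\tau)$: the condition $\esp[\int_0^\infty e^{(\rho-2\lambda)s}|h(A^*(Z_s))|^2ds]<\infty$ does not follow from these ingredients, since $h$ is unbounded and $h(A^*(z))$ is not controlled by $|z|^2$ in general; the paper treats this admissibility as a standing hypothesis rather than a consequence (see Proposition \ref{incentive}, which explicitly assumes $A^*(Z)\in\mathcal{D}^{Ag}_\rho$). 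Since that claim is not part of the proposition being proved, nothing essential is lost.
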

 The following proposition shows the structure of the incentive compatible contracts in $\mathcal{D}^{Ag}_\rho$.
\begin{proposition}\label{incentive}
Let  $\Gamma=(R,\tau,\xi)\in \mathcal{D}^P_\rho$ and $Z\in {\cal H}^2_\rho(\tau)$. If $A^*(Z)\in \mathcal{D}^{Ag}_\rho$, then for all $A\in \mathcal{D}^{Ag}_\rho$ such that
  \begin{eqnarray}\label{inecomp}
\psi(A_t,Z_t)\leq \psi(A^*(Z_t),Z_t), t\in\lbr 0,\tau\lbr,\,dt\otimes d\pr a.e.,
  \end{eqnarray}  
  we have $J_t^{Ag}(\Gamma,A)\leq J_t^{Ag}(\Gamma,A^*(Z))$, $t\in\lbr 0,\tau\lbr,\,dt\otimes d\pr$ a.e.
\end{proposition}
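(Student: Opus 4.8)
The plan is to read the statement as a corollary of the comparison theorem for the Agent's BSDE, Proposition~\ref{rxisunicom}(2), applied with $A^1:=A$ and $A^2:=A^*(Z)$. By Lemma~\ref{lemmeincentive}, $Y^1:=J^{Ag}(\Gamma,A)$ is the first component of the solution $(Y^1,Z^1)\in\mathcal{S}_\rho^2(\tau)\times\mathcal{H}_\rho^2(\tau)$ of the BSDE~\eqref{BSDE22} with $i=1$, and $Y^2:=J^{Ag}(\Gamma,A^*(Z))$ is the first component of the solution $(Y^2,Z^2)$ of~\eqref{BSDE22} with $i=2$; both BSDEs carry the common terminal value $U(\xi)\un_{\{\tau<\infty\}}$ since $\Gamma$ is fixed, and both are well posed because $A\in\mathcal{D}^{Ag}_\rho$ and, by hypothesis, $A^*(Z)\in\mathcal{D}^{Ag}_\rho$.

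The key step is the identification $Z^2=Z$, $dt\otimes d\pr$-a.e. I would obtain it by checking that the pair $\big(J^{Ag}(\Gamma,A^*(Z)),Z\big)$ itself solves~\eqref{BSDE22} for $i=2$: substituting the effort $A^*(Z_t)$ and the control $Z$ into the driver $-\lambda y+U(R_t)+\psi(A^*(Z_t),z)$, using the form of $\psi$ in~\eqref{psii} together with the dynamics~\eqref{Sv} of $X$ under the effort $A^*(Z)$, a short computation shows that $e^{-\lambda t}J^{Ag}_t(\Gamma,A^*(Z))+\int_0^t e^{-\lambda s}\big(U(R_s)-h(A^*(Z_s))\big)ds$ is a square-integrable martingale, so that $\big(J^{Ag}(\Gamma,A^*(Z)),Z\big)$ is a solution of~\eqref{BSDE22} for $i=2$; the uniqueness statement Proposition~\ref{rxisunicom}(1) then forces $Z^2=Z$. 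Equivalently, this is the ``reverse'' reading of the pointwise bijection $z\mapsto A^*(z)$ of Lemma~\ref{lemme01-3} and Proposition~\ref{exitenceastar}: $A^*(Z)$ is precisely the effort whose sensitivity process in~\eqref{Ii01} is the prescribed $Z$.

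With $Z^2=Z$ in hand, the hypothesis~\eqref{inecomp}, $\psi(A_t,Z_t)\le\psi(A^*(Z_t),Z_t)$ on $\lbr 0,\tau\lbr$, becomes verbatim the comparison inequality~\eqref{comp1}, namely $\psi(A^1_t,Z^2_t)\le\psi(A^2_t,Z^2_t)$. Proposition~\ref{rxisunicom}(2) then yields $Y^1_t\le Y^2_t$, i.e. $J^{Ag}_t(\Gamma,A)\le J^{Ag}_t(\Gamma,A^*(Z))$ $dt\otimes d\pr$-a.e. on $\lbr 0,\tau\lbr$, which is the claim. I expect the only genuine obstacle to be the identification $Z^2=Z$: the sensitivity delivered by Lemma~\ref{lemmeincentive} comes abstractly from the martingale representation theorem, so some care is needed to verify that feeding the effort $A^*(Z)$ back into the Agent's BSDE regenerates the given process $Z$; once this is secured (through uniqueness, as above, or directly via Proposition~\ref{exitenceastar}), the proposition is an immediate application of the comparison theorem.
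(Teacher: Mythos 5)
Your route is exactly the paper's: represent $J^{Ag}(\Gamma,A)$ and $J^{Ag}(\Gamma,A^*(Z))$ as solutions of BSDE~\eqref{BSDE22} with the common terminal datum $U(\xi)\un_{\{\tau<\infty\}}$, identify the control component of the second BSDE with the prescribed $Z$, and invoke Proposition~\ref{rxisunicom}(2). You are also right that the entire weight of the argument rests on the identification $Z^2=Z$ (the paper simply writes the BSDE for $J^{Ag}(\Gamma,A^*(Z))$ with martingale part $Z_t\,dX_t$ and does not comment). The difficulty is that neither of your two justifications of this step closes. The martingale property of $e^{-\lambda t}J^{Ag}_t(\Gamma,A^*(Z))+\int_0^t e^{-\lambda s}(U(R_s)-h(A^*(Z_s)))ds$ holds by the tower property for \emph{any} admissible effort and is precisely how Lemma~\ref{lemmeincentive} produces \emph{some} integrand $Z^2$ via martingale representation; it says nothing about that integrand being the exogenously given $Z$. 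To then invoke uniqueness from Proposition~\ref{rxisunicom}(1) you would first have to verify that the pair $\big(J^{Ag}(\Gamma,A^*(Z)),Z\big)$ solves the BSDE, which \emph{is} the claim $Z^2=Z$: the argument is circular. The appeal to Proposition~\ref{exitenceastar} does not repair this: that result relates an effort to the sensitivity process of the value process generated by that effort; it is not the statement that feeding $A^*(Z)$ back into the Agent's problem regenerates an arbitrary exogenous $Z$ (note moreover that $A^*$ collapses all $z\le\sigma h'(0)/\varphi'(0)$ to $0$, so $Z$ is not even recoverable from $A^*(Z)$ on that region).

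In the stated generality the identification is in fact false. Take $R$, $\tau$, $\xi$ deterministic and a genuinely random $Z$ with $A^*(Z)\equiv 0$: the martingale $M$ is then deterministic and its representation integrand is $0\neq Z$. A structural way to see that it cannot hold for every $Z$: since $A^*(z)=\arg\max_a\psi(a,z)$, hypothesis~\eqref{inecomp} is satisfied by \emph{every} $A\in\mathcal{D}^{Ag}_\rho$, so if $Z^2=Z$ held unrestrictedly the proposition would make every $A^*(Z)$, for every $Z$, simultaneously optimal for the same fixed $\Gamma$, which is absurd. What makes the statement work as it is used downstream is a compatibility between $\xi$ and $Z$ that is implicit in the proposition: the relevant contracts are those for which $U(\xi)$ is the terminal value of the forward SDE~\eqref{eqJforwar} driven by $Z$, in which case $\big(J^{Ag},Z\big)$ solves~\eqref{BSDE22} \emph{by construction} and uniqueness then legitimately identifies $Y^2=J^{Ag}(\Gamma,A^*(Z))$ and $Z^2=Z$. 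So you have correctly located the one genuine weak point (which the paper's own proof also leaves unjustified), but the patch must come from the forward construction of $\xi$ from $Z$, not from the martingale property or the pointwise bijection.
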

\begin{proof}
  Let $\Gamma=(R,\tau,\xi)\in \mathcal{D}^P_\rho$ and $Z\in {\cal H}_\rho^2(\tau)$. As $A^*(Z)\in \mathcal{D}^{Ag}_\rho$,
  then $(J_t^{Ag}(\Gamma,A^*(Z)))_{0\leq t\leq \tau }$ solves the BSDE 
  \begin{eqnarray*}\label{Ii1-3}
  dJ_t^{Ag}(\Gamma,A^*(Z))&=&-\left(-{{\lambda}} J_t^{Ag}(\Gamma,A^*(Z_t))+U(R_t)+\psi(A^*(Z_t),Z_t)\right)dt+Z_t dX_t,\nonumber\\
  J_\tau^{Ag}(\Gamma,A^*(Z))&=&U(\xi)\un_{\{\tau<\infty\}}.
  \end{eqnarray*}
  From inequality (\ref{inecomp}) and using the comparison theorem (see Proposition \ref{rxisunicom}), we conclude that
  $J_t^{Ag}(\Gamma,A)\leq J_t^{Ag}(\Gamma,A^*(Z)), t\in\lbr 0,\tau\lbr,\,dt\otimes d\pr$ a.e.
\end{proof}\pf

\subsection{Hamilton Jacobi Bellman Variational Inequality}
The parametrization of the optimal effort as a deterministic function of a process $Z$ is useful for the Principal to solve her optimization problem   
which can be written as a stochastic control problem  under partial information. We adopt a forward point of view for the dynamics of the Agent's objective function
which evolves according to  the following forward SDE:
{\small
\begin{eqnarray}\label{eqJforwar}
  dJ_t^{Ag}(x,R,\tau,A^*(Z))&=&\left(\lambda J_t^{Ag}(x,R,\tau,A^*(Z))-U(R_t)+h(A^*(Z_t))-Z_t\frac{\varphi(A^*(Z_t))}{\sigma}\right)dt+Z_tdX_t, \nonumber\\
  J_0^{Ag}(x,R,\tau,A^*(Z))&=&x\geq \underline{x}.
\end{eqnarray}}
  SDE (\ref{eqJforwar}) is well-defined for $(\Gamma, A^*(Z))\in \mathcal{D}^P_\rho\times \mathcal{D}^{Ag}_\rho$.
We recall that the initial condition of the SDE (\ref{eqJforward-3}) satisfies 
  the reservation constraint formulated in the maximization problem of the Principal.
  However, we solve the stochastic control problem related to the Principal on the whole domain i.e on $\R^+$.
  In fact the Agent's objective function at time $t$, denoted by $J_t^C(x,R,\tau,A^*(Z))$ could be less than $ \underline x$ although $J_0^C(x,R,\tau,A^*(Z))\geq  \underline x$. 
The process $(J_t^{Ag}(x,R,\tau,A^*(Z))_{t\geq 0}$ is considered as a state variable, and the contract $\Gamma$ and the optimal effort $A^*(Z)$ as  control processes
which must be $\ff^X$- adapted for the principal.
From Proposition \ref{exitenceastar}, there exists a bijection between  $Z$ and $A^*(Z)$ and $ Z_t=(\sigma\frac{h'}{\varphi'})(A^*(Z_t))\un_{\{A^*(Z_t)>0\}}$ for all $t\geq 0$.
The Principal's value function  at time $0$  is related to a stochastic control under partial information, and it is defined as follows: 
\begin{equation} \label{valuefunctiono}
v(x) :=\Sup_{(R,\tau,{{A^*(Z)}})\in {\cal{G}^X}
}\esp\left[\int_0^\tau e^{-\delta s}(\varphi(A^*_s(Z))-R_s)ds-e^{-\delta \tau}U^{-1}(J_\tau^{Ag}(x,R,\tau,A^*(Z)))\right],
\end{equation}
where  ${\cal{G}^X}$ is given by 
{\begin{eqnarray*}
{\cal{G}^X} &:=& \{(R,\tau,A^*(Z))~ R\geq 0 ~{\ff}^X\mbox{-progressively measurable},\tau \in {\cal{T}^X}, { A^*(Z)}\geq0~{\ff}^X\mbox{-progressively}\\
& &
\mbox{ measurable such that}~\esp \left[\int_0^\infty e^{(\rho-2\delta) s} |\varphi(A^*(Z_s))|^2ds\right]<\infty,\,\, \esp \left[\int_0^\infty e^{(\rho-2\delta) s}|R_s|^2ds\right]<\infty, \\
& &\mbox{ and } \esp \left[e^{(\rho-2\delta) s}|U^{-1}(J_\tau^{Ag}(x,R,\tau,A^*(Z)))|^2\un_{\{\tau<\infty\}}\right]<\infty\}.
\end{eqnarray*}}
\noindent As $W$ is not a $\ff^X$-Brownian motion, it is not clear that the dynamic programming principle holds for $v$ and it is not immediate to derive the associated dynamic programming equation. 
To overcome this difficulty, we introduce an auxiliary Principal's value function at time $0$, when the controls are $\ff$-adapted. It is defined by: 
\begin{equation} \label{valuefunction}
v^{aux}(x) :=\Sup_{(R,\tau,{{A^*(Z)}})\in {\cal{G}}
}\esp\left[\int_0^\tau e^{-\delta s}(\varphi(A^*_s(Z))-R_s)ds-e^{-\delta \tau}U^{-1}(J_\tau^{Ag}(x,R,\tau,A^*(Z)))\right],
\end{equation}
where ${\cal{G}}$ is given by 
{\begin{eqnarray*}
{\cal{G}} &:=& \{(R,\tau,A^*(Z))~ R\geq 0 ~{\ff}\mbox{-progressively measurable},\tau \in {\cal{T}}, { A^*(Z)}\geq0~{\ff}\mbox{-progressively}\\
& &
\mbox{ measurable such that}~\esp \left[\int_0^\infty e^{(\rho-2\delta) s}| \varphi(A^*(Z_s))|^2ds\right]<\infty,\,\, \esp \left[\int_0^\infty e^{(\rho-2\delta) s}|R_s|^2ds\right]<\infty, \\
& &\mbox{ and } \esp \left[e^{(\rho-2\delta) s}|U^{-1}(J_\tau^{Ag}(x,R,\tau,A^*(Z)))|^2\un_{\{\tau<\infty\}}\right]<\infty\}.
\end{eqnarray*}}
As all the controls are $\ff$-adapted, it is more convenient to use the following structure of the SDE (\ref{eqJforwar}), where the stochastic integral is driven by the $\ff$-Brownian motion:
\begin{eqnarray}\label{eqJforward-3}
  dJ_t^{Ag}(x,R,\tau,A^*(Z))&=&\left({{\lambda}} J_t^{Ag}(x,R,\tau,A^*(Z))-U(R_t)+h(A^*(Z_t))\right)dt\\
  &+&(\sigma\frac{h'}{\varphi'})(A^*(Z_t))\un_{\{A^*(Z_t)>0\}}dW_t.\nonumber
\end{eqnarray}
The Hamilton Jacobi Bellman Variational Inequality (HJBVI) associated to the auxiliary value function is given by:
\begin{equation}\label{IVHJB-3}
\min\left\{\delta w(x)-\Sup_{{(r,a)\in \R^{+}\times \R^{+}}}[{\cal L}^{a,r} w(x)+\varphi(a)-r],w(x)+U^{-1}(x)\right\}=0,\,\, x\in(0,\infty),
\end{equation}
where the second order differential operator ${\cal L}^{a,r}$ is defined by
$${\cal L}^{a,r}w(x):=\frac{1}{2}{(\sigma\frac{h'(a)}{\varphi'(a)})^2}\un_{a>0}w"(x)+[{\lambda} x-U(r)+h(a)]w'(x).$$
Lemma \ref{lemme11} gives the boundary condition $v(0)$ and the growth property satisfied by the value function $v$.
These results will be useful for the verification theorem. The proof is similar as in Hajjej et al. \cite{hajjej2019optimal} and thus is omitted.

\begin{lemme}\label{lemme11}
(1) The value function  $v^{aux}$ defined in (\ref{valuefunction}) satisfies
\begin{equation}\label{CB}
v^{aux}(0)=0.
\end{equation}
(2) There exists a positive constant $K$ such that  
\begin{equation}\label{eqcroissance}
\mbox{for all}\quad  x \geq 0, |v^{aux}(x)| \leq K+U^{-1}(x).
\end{equation}

\end{lemme}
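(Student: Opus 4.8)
\textbf{Proof plan for Lemma~\ref{lemme11}.}
The plan is to establish both assertions by direct manipulation of the stochastic control problem defining $v^{aux}$ in \eqref{valuefunction}, mirroring the argument in Hajjej et al.~\cite{hajjej2019optimal} but adapted to the present forward SDE \eqref{eqJforward-3}. For part (1), I would start from the observation that when the state variable is $x=0$, i.e.\ $J_0^{Ag}(0,R,\tau,A^*(Z))=0$, the reservation level is degenerate, and I would argue that the only admissible policy that keeps the criterion finite is the trivial one $\tau^*=0$. Indeed, if $x=0$ then either the Principal stops immediately, contributing $-U^{-1}(0)=0$ to the payoff, or she runs the contract; in the latter case I would show, using the non-negativity of $U^{-1}$ and the sign of the running reward together with the constraint that the Agent's continuation value must stay compatible with $J_0^{Ag}=0$, that the best she can do is still $0$. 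Combining the upper bound $v^{aux}(0)\le 0$ (take $\tau=0$) with the reverse inequality $v^{aux}(0)\ge 0$ (the stopping-at-zero policy is admissible and yields exactly $0$) gives \eqref{CB}.

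For part (2), the plan is to produce a uniform affine-type majorant. Fix $x\ge 0$ and an arbitrary admissible triple $(R,\tau,A^*(Z))\in\mathcal{G}$. I would bound the running term $\int_0^\tau e^{-\delta s}(\varphi(A^*_s(Z))-R_s)\,ds$ from above by $\int_0^\infty e^{-\delta s}\varphi(A^*_s(Z))\,ds \le \|\varphi\|_\infty/\delta$ using boundedness of $\varphi$ (Assumption~\ref{hyp}) and $R_s\ge 0$. For the terminal term $-e^{-\delta\tau}U^{-1}(J_\tau^{Ag})\un_{\{\tau<\infty\}}$, I would use $U^{-1}\ge 0$ so that this contribution is at most $0$ unless one needs a finer estimate; more carefully, to get the $U^{-1}(x)$ dependence I would control $\esp[e^{-\delta\tau}U^{-1}(J_\tau^{Ag})]$ from below (it is $\ge 0$) and from above via the dynamics \eqref{eqJforward-3}: applying It\^o/Gr\"onwall to $e^{-\lambda t}J_t^{Ag}$ and taking expectations shows $\esp[e^{-\lambda\tau}J_\tau^{Ag}\un_{\{\tau<\infty\}}]\le x$ up to a controlled remainder coming from the bounded terms $U(R_t)$ and $h(A^*(Z_t))$, whence a bound on $\esp[e^{-\delta\tau}U^{-1}(J_\tau^{Ag})]$ in terms of $x$ using concavity/monotonicity of $U^{-1}$ and Jensen. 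Assembling the running-term bound $\|\varphi\|_\infty/\delta$ and the terminal-term bound of the form $c\,U^{-1}(x)+c'$, and taking the supremum over admissible controls, yields $|v^{aux}(x)|\le K+U^{-1}(x)$ for a suitable constant $K$; the lower bound $v^{aux}(x)\ge -U^{-1}(x)$ is immediate by choosing $\tau=0$, which gives payoff $-U^{-1}(x)$.

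The main obstacle I anticipate is the terminal-term estimate in part (2): because the control set $\mathcal{G}$ only imposes the integrability conditions with weight $e^{(\rho-2\delta)s}$ rather than uniform bounds, one must be careful that $\esp[e^{-\delta\tau}U^{-1}(J_\tau^{Ag})\un_{\{\tau<\infty\}}]$ is genuinely finite and controlled uniformly — this is exactly where the admissibility condition $\esp[e^{(\rho-2\delta)s}|U^{-1}(J_\tau^{Ag})|^2\un_{\{\tau<\infty\}}]<\infty$ together with $\rho>0$ and $\lambda\ge\delta$ is used, via a Cauchy--Schwarz argument matching the one in Lemma~\ref{lemmeincentive}. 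Since the statement explicitly says the proof is analogous to Hajjej et al.~\cite{hajjej2019optimal} and is omitted, I would at most indicate these two points and refer the reader there for the routine estimates.
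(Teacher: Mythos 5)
The paper omits this proof (deferring to \cite{hajjej2019optimal}), so there is no in-text argument to compare against line by line; your plan contains the right ingredients and would work, but two points need cleaning up. First, in part (1) you attribute the upper bound $v^{aux}(0)\le 0$ to the choice $\tau=0$: picking a particular control in a supremum only yields a \emph{lower} bound, so $\tau=0$ gives $v^{aux}(0)\ge -U^{-1}(0)=0$, while the inequality $v^{aux}(0)\le 0$ is exactly the part that requires the boundary argument you sketch earlier (at $x=0$, any $a^*>0$ makes the volatility $\sigma h'(a^*)/\varphi'(a^*)$ strictly positive and pushes $J^{Ag}$ negative, so one is forced to $A^*=0$ and then to $R=0$ to keep the drift $-U(R)$ nonnegative, whence zero running reward). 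Second, in part (2) the detour via It\^o/Gr\"onwall on $e^{-\lambda t}J^{Ag}_t$ is both unnecessary and, as written, incorrect: $U(R_t)$ and $h(A^*(Z_t))$ are \emph{not} bounded (only $\varphi$ is bounded under Assumption \ref{hyp}), so the ``controlled remainder'' you invoke does not exist. Fortunately the simple route you mention first already suffices: since $U^{-1}\ge 0$, the terminal term $-e^{-\delta\tau}U^{-1}(J_\tau^{Ag})\un_{\{\tau<\infty\}}$ is nonpositive and can be discarded, and $R_s\ge 0$ with $\varphi$ bounded gives $v^{aux}(x)\le \|\varphi\|_\infty/\delta=:K$; the admissible policy $\tau=0$ gives $v^{aux}(x)\ge -U^{-1}(x)$; combining the two yields $|v^{aux}(x)|\le K+U^{-1}(x)$ with no estimate of $\esp\left[e^{-\delta\tau}U^{-1}(J_\tau^{Ag})\right]$ under general controls and no Cauchy--Schwarz argument needed. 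If you strip out the superfluous branch and fix the mislabelled inequality in part (1), the proof is complete and is the standard argument the paper delegates to the reference.
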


The following result is a verification theorem, stating  that if there exists a smooth solution to the
(HJBVI) (\ref{IVHJB-3}), then it coincides with the value function (\ref{valuefunction}).
{As we do not make a change of probability in the definition of the Principal's value function, the verification theorem requires weaker integrability conditions than in  \cite{hajjej2019optimal}.}
{\begin{theoreme}\label{verification} [Verification Theorem] \\ 
 {We suppose that there exists   a constant $\hat b > 0$ and  a continuous function $w: \mathbb R^+ \longrightarrow \R$ s.t.: \\
  (i) w(0)=0, $w\in C^2([0,\hat b))$ satisfying the  growth condition \eqref{eqcroissance},\\
  (ii) $w>-U^{-1}$ on $(0,\hat b)$ and  $w=-U^{-1}$ on $[\hat b,\infty)$\\
  (iii) $\delta w(x)-\Sup_{(r,a)\in\R^+\times\R^+}\{{\cal{L}}^{a,r}w(x)+\varphi(a)-r\}=0$ for all $x \in (0,\hat b)$.\\
(iv) $\delta (-U^{-1}(x))-\Sup_{(r,a)\in\R^+\times\R^+}\{{\cal{L}}^{a,r}(-U^{-1}(x))+\varphi(a)-r\} \geq 0$ for all $x \in  [\hat b,\infty)$.\\ }
\noindent We also assume that 
\begin{equation}\label{hyp3}
{{\Sup_{(R,\tau,A^*(Z))\in{\cal{G}} }\esp \left[|e^{-\delta\tau}{   U^{-1}}(J_\tau^{Ag}(x,R,\tau,A^*(Z)))|\right]<\infty},}
\end{equation} 
Then we have:
\begin{itemize}
\item [(1)]   $w(x)\geq v^{aux}(x)$ for  any { $x\geq 0$},
\item [(2)] Suppose there exists two  measurable non-negative functions $(a^*,r^*)$ defined on $\mathbb R^+$ s.t. 
\begin{eqnarray}\label{conop} 
  \Sup_{(r,a)\in\R^+\times\R^+}\{{\cal{L}}^{a,r}w(x)+\varphi(a)-r\}={\cal{L}}^{a^*(x),r^*(x)}w(x)+\varphi(a^*(x))-r^*(x), \, \  x \in (0,\hat b),
 \end{eqnarray}   
 and the SDE 
 $$ dJ_t^{Ag}=\left({{\lambda}} J_t^{Ag}-U(r^*(J_t^{Ag}))+h(a^*(J_t^{Ag}))-Z_t\frac{\varphi(a^*(J_t^{Ag}))}{\sigma}\right)dt+Z_tdW_t, \, \quad {  J_0^{Ag}\geq \underline{x}}$$
  admits a unique solution $\widehat{J_t^{Ag}}$.
   We define 
 \begin{equation}\label{tauverification}
\tau^* := ~\inf\{t\geq 0~:~ w(\widehat{J_t^{Ag}})\leq -{   U^{-1}}(\widehat{J_t^{Ag}})\}
\end{equation}
 and we assume that $( r^*(\widehat{J^{Ag}}) ,\tau^*, a^*(\widehat{J^{Ag}}))$ lies in  ${\cal{G}}$ and {${\esp^\pr[e^{  (\rho-2\lambda)\tau^*}\widehat{J_{\tau^*}^{Ag}}^{2}\un_{\{\tau^*<\infty\}}]}<\infty.$}\\
Then  we have
\begin{itemize}
\item [(a)] 
$w=v^{aux},$ and $\tau^*$ is an optimal stopping time for the problem (\ref{valuefunction}).\\
\item [(b)] The optimal rent is given by $r^*(x)=(U')^{-1}(-\frac{1}{w'(x)})\un_{w'(x)<0}$ {for all $x\in(0,\hat b)$}.
\end{itemize}
\end{itemize}
\end{theoreme}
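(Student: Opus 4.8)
The plan is to follow the classical scheme for mixed optimal stopping/stochastic control verification, applied to the state process $\widehat{J^{Ag}}$ given by the forward SDE \eqref{eqJforward-3}. First I would prove part (1), the inequality $w \geq v^{aux}$. Fix $x\geq 0$ and an arbitrary admissible triple $(R,\tau,A^*(Z))\in\mathcal{G}$, and write $J_t := J_t^{Ag}(x,R,\tau,A^*(Z))$ for the associated state process. The idea is to apply Itô's formula to $s\mapsto e^{-\delta s}w(J_s)$ on $[0,\tau\wedge\theta_n]$, where $\theta_n$ is a localizing sequence of stopping times (e.g. exit times of $J$ from $[1/n,\hat b - 1/n]$, or where the local martingale part exceeds level $n$) needed because $w$ is only $C^2$ on $[0,\hat b)$. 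Using $Z_t = (\sigma\tfrac{h'}{\varphi'})(A^*(Z_t))\un_{\{A^*(Z_t)>0\}}$ from Proposition \ref{exitenceastar}, the drift term that appears is exactly $\delta w(J_s) - \mathcal{L}^{A^*(Z_s),R_s}w(J_s)$, which by hypothesis (iii) on $(0,\hat b)$ and hypothesis (iv) on $[\hat b,\infty)$ — together with the obstacle inequality $w\geq -U^{-1}$ from (ii) — is bounded below by $\varphi(A^*_s(Z)) - R_s$ on the whole half-line. Rearranging, I get
\begin{equation*}
w(x) \geq \esp\!\left[\int_0^{\tau\wedge\theta_n} e^{-\delta s}\big(\varphi(A^*_s(Z))-R_s\big)ds + e^{-\delta(\tau\wedge\theta_n)}w(J_{\tau\wedge\theta_n})\right],
\end{equation*}
after checking that the stochastic integral $\int_0^{\tau\wedge\theta_n} e^{-\delta s}w'(J_s)Z_s\,dW_s$ is a true martingale (which is where the localization is used) so its expectation vanishes. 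Then I let $n\to\infty$: on $\{\tau<\infty\}$ the obstacle condition $w(J_\tau)\geq -U^{-1}(J_\tau)$ and continuity of $w$ handle the terminal term, while on $\{\tau=\infty\}$ the growth bound \eqref{eqcroissance}, the integrability conditions built into $\mathcal{G}$, and the uniform integrability hypothesis \eqref{hyp3} let me pass to the limit by dominated/Fatou arguments. This yields $w(x)\geq J_0^{P}$-criterion for every admissible control, hence $w(x)\geq v^{aux}(x)$.

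For part (2)(a), I would run the same Itô computation along the candidate optimal control $(r^*(\widehat{J^{Ag}}),\tau^*,a^*(\widehat{J^{Ag}}))$, where $\widehat{J^{Ag}}$ solves the closed-loop SDE stated in the theorem and $\tau^*$ is the first entry of $\widehat{J^{Ag}}$ into the obstacle region defined by \eqref{tauverification}. Now hypothesis \eqref{conop} says the supremum in the HJBVI is attained at $(a^*,r^*)$, so the drift term becomes an \emph{equality}: $\delta w(\widehat{J_s^{Ag}}) - \mathcal{L}^{a^*,r^*}w(\widehat{J_s^{Ag}}) = \varphi(a^*(\widehat{J_s^{Ag}})) - r^*(\widehat{J_s^{Ag}})$ for $s<\tau^*$ by (iii); and at $\tau^*$, by continuity and the definition of $\tau^*$, $w(\widehat{J_{\tau^*}^{Ag}}) = -U^{-1}(\widehat{J_{\tau^*}^{Ag}})$ on $\{\tau^*<\infty\}$. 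Reproducing the localization and passing to the limit using the assumed membership $(r^*(\widehat{J^{Ag}}),\tau^*,a^*(\widehat{J^{Ag}}))\in\mathcal{G}$ and the extra moment bound $\esp^\pr[e^{(\rho-2\lambda)\tau^*}\widehat{J_{\tau^*}^{Ag}}^2\un_{\{\tau^*<\infty\}}]<\infty$, I obtain
\begin{equation*}
w(x) = \esp\!\left[\int_0^{\tau^*} e^{-\delta s}\big(\varphi(a^*(\widehat{J_s^{Ag}}))-r^*(\widehat{J_s^{Ag}}))ds - e^{-\delta\tau^*}U^{-1}(\widehat{J_{\tau^*}^{Ag}})\un_{\{\tau^*<\infty\}}\right],
\end{equation*}
which is exactly the criterion evaluated at an admissible control, hence $w(x)\leq v^{aux}(x)$. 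Combined with part (1), $w = v^{aux}$ and $\tau^*$ is optimal.

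For part (2)(b), I would compute the argmax in the $r$-variable of $r\mapsto -U(r)w'(x) - r$ (the only $r$-dependent terms in $\mathcal{L}^{a,r}w(x)+\varphi(a)-r$). When $w'(x)<0$ this is a strictly concave function of $r$ on $(0,\infty)$ (since $U$ is strictly concave and $-w'(x)>0$), with interior first-order condition $-U'(r)w'(x) - 1 = 0$, i.e. $U'(r) = -1/w'(x)$, giving $r^*(x) = (U')^{-1}(-1/w'(x))$; Inada's conditions on $U'$ guarantee this has a solution. When $w'(x)\geq 0$ the map is nonincreasing, so the maximizer is the boundary point $r^*(x)=0$. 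This is exactly the claimed formula $r^*(x)=(U')^{-1}(-1/w'(x))\un_{\{w'(x)<0\}}$.

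The main obstacle I expect is the integrability/uniform-integrability bookkeeping in the limiting arguments: justifying that the stochastic integrals are genuine martingales (not merely local martingales) along both arbitrary admissible controls and the candidate optimum, and controlling the terminal term $e^{-\delta(\tau\wedge\theta_n)}w(J_{\tau\wedge\theta_n})$ as $n\to\infty$ — particularly on $\{\tau=\infty\}$, where one needs $e^{-\delta t}w(J_t)\to 0$ in a suitable sense. This is precisely why the hypotheses \eqref{hyp3}, the growth bound \eqref{eqcroissance}, the definition of $\mathcal{G}$ via the $e^{(\rho-2\delta)s}$-weighted $L^2$ conditions, and the additional moment bound on $\widehat{J_{\tau^*}^{Ag}}$ are imposed; the analogous estimates were carried out in Hajjej et al. \cite{hajjej2019optimal}, and since here no change of probability is performed the estimates are in fact somewhat simpler, as the statement already remarks.
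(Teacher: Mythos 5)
Your proposal is correct and follows essentially the same route as the paper: It\^o's formula applied to $e^{-\delta s}w(J_s)$ with a localizing sequence, the HJBVI conditions (iii)--(iv) together with the obstacle condition (ii) for the inequality $w\geq v^{aux}$, the same computation with equality along the feedback control $(r^*,a^*,\tau^*)$ for the reverse inequality, and the pointwise maximization of $r\mapsto -w'(x)U(r)-r$ for the optimal rent. The only cosmetic differences are that the paper localizes via $\tau_n=\tau\wedge\inf\{t:|w'(J_t)\,\sigma h'(A^*(Z_t))/\varphi'(A^*(Z_t))|>n\}$ and explicitly invokes Krylov's generalized It\^o formula to cover the possible failure of $C^2$ regularity of $w$ at $\hat b$, a point you flag only implicitly.
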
}
\noindent The proof is postponed in the Appendix.

\subsection{Comparison of the filtrations $\ff$ and $\ff^X$ at the optimum}
In this subsection, we study the inclusion properties of three filtrations: $\ff$  the filtration generated by {the} standard Brownian motion $W$ ({global} filtration), $\ff^X$  the filtration generated by the output  process $X$ (filtration of available information for the Principal) and $\ff^{J^{Ag}}$  the filtration generated by the process $J^{Ag}$. We have the natural inclusions: $\ff^{J^{Ag}}$ and $\ff^X$ are included in $\ff$. The following proposition shows that under some sufficient conditions, the  three filtrations coincide at the optimum, and in particular the process $J^{Ag}(x,R,\tau,A^*(Z))$ is $\ff^X$-adapted.
\begin{proposition}\label{filtracoinc}
We assume that the stochastic differential equation
 { \begin{equation}\label{SDES}
d{\bf J}_t^{Ag}={{\lambda}} {\bf J}_t^{Ag} dt-\Bigg(U(r^*({\bf J}^{Ag}_t))-h(a^*({\bf J}_t^{Ag}))\Bigg)dt
+\sigma\dfrac{h'}{\varphi'}(a^*({\bf J}_t^{Ag}))dW_t,~{\bf J}_0^{Ag}=x,
\end{equation}}
 admits a unique strong solution, where $ {\bf{J}}^{Ag}_t:=J_t^{Ag}(x,R,\tau,A^*(Z))$ for all $t\geq 0$, and that the function $x\longrightarrow a^*(x)$ is positive. Then, under Assumption \ref{hyp},
 the filtrations $\ff^X,$ $\ff^{{\bf J}^{Ag}}$ and $\ff$ coincide at the optimum.
\end{proposition}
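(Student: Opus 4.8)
The plan is to exploit the structure of the optimally controlled state equation \eqref{SDES}, which is a one-dimensional SDE driven by $W$ with coefficients that are (assumed) regular enough to have a unique strong solution. First I would observe that, since $\mathbf{J}^{Ag}$ solves \eqref{SDES} which is driven by the $\ff$-Brownian motion $W$, the process $\mathbf{J}^{Ag}$ is $\ff$-adapted, hence $\ff^{\mathbf{J}^{Ag}}\subseteq\ff$; likewise $\ff^X\subseteq\ff$ directly from \eqref{Sv}. The nontrivial content is the reverse inclusions, and the key is that the diffusion coefficient $\sigma\frac{h'}{\varphi'}(a^*(\cdot))$ is \emph{nonvanishing}: by hypothesis $x\mapsto a^*(x)$ is positive, and by Assumption~\ref{hyp} the quotient $\frac{h'}{\varphi'}$ is strictly positive (since $h'>0$ and $\varphi'>0$), so the diffusion coefficient is bounded away from $0$ on compacts and in particular never zero. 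This lets me invert the dynamics: from \eqref{SDES},
\begin{equation*}
dW_t=\frac{1}{\sigma}\,\frac{\varphi'}{h'}\big(a^*(\mathbf{J}_t^{Ag})\big)\Big(d\mathbf{J}_t^{Ag}-\lambda\mathbf{J}_t^{Ag}\,dt+\big(U(r^*(\mathbf{J}_t^{Ag}))-h(a^*(\mathbf{J}_t^{Ag}))\big)\,dt\Big),
\end{equation*}
so $W$ is adapted to $\ff^{\mathbf{J}^{Ag}}$ (the stochastic integral on the right is an $\ff^{\mathbf{J}^{Ag}}$-adapted Itô integral since its integrand is a measurable function of $\mathbf{J}^{Ag}$). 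Since $\ff$ is generated by $W$, this gives $\ff\subseteq\ff^{\mathbf{J}^{Ag}}$, and combined with the trivial inclusion we get $\ff=\ff^{\mathbf{J}^{Ag}}$.

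Next I would relate $\ff^X$ to the other two. On one hand, recall from Lemma~\ref{lemmeincentive} and the forward dynamics \eqref{eqJforwar} that $d\mathbf{J}_t^{Ag}=\big(\lambda\mathbf{J}_t^{Ag}-U(R_t)+h(A^*(Z_t))-Z_t\varphi(A^*(Z_t))/\sigma\big)dt+Z_t\,dX_t$ with $Z_t=\sigma\frac{h'}{\varphi'}(a^*(\mathbf{J}_t^{Ag}))$ at the optimum; all the drift and diffusion coefficients here are measurable functions of $\mathbf{J}^{Ag}$ (using $R_t=r^*(\mathbf{J}_t^{Ag})$ and $A^*(Z_t)=a^*(\mathbf{J}_t^{Ag})$ from the verification theorem), so $\mathbf{J}^{Ag}$ is a functional of $X$, giving $\ff^{\mathbf{J}^{Ag}}\subseteq\ff^X$. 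On the other hand, from \eqref{Sv}, $dX_t=\varphi(A_t^*)\,dt+\sigma\,dW_t=\varphi(a^*(\mathbf{J}_t^{Ag}))\,dt+\sigma\,dW_t$, and since both $\mathbf{J}^{Ag}$ and $W$ are $\ff^{\mathbf{J}^{Ag}}$-adapted (the latter by the displayed inversion above), $X$ is $\ff^{\mathbf{J}^{Ag}}$-adapted, i.e. $\ff^X\subseteq\ff^{\mathbf{J}^{Ag}}$. Chaining the inclusions $\ff^X\subseteq\ff^{\mathbf{J}^{Ag}}\subseteq\ff^X$ and $\ff^{\mathbf{J}^{Ag}}=\ff$ yields $\ff^X=\ff^{\mathbf{J}^{Ag}}=\ff$, which is the claim.

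The main obstacle, and the point requiring care, is the measurability/adaptedness bookkeeping when inverting the SDE: writing $W$ (or $X$) as an $\ff^{\mathbf{J}^{Ag}}$-adapted process requires that the Itô integral $\int_0^t \frac{1}{\sigma}\frac{\varphi'}{h'}(a^*(\mathbf{J}_s^{Ag}))\,d\mathbf{J}_s^{Ag}$ be well-defined and coincide pathwise with $\int_0^t \frac{h'}{\varphi'}(a^*(\mathbf{J}_s^{Ag}))\,dW_s\cdot\frac{\varphi'}{h'}(a^*(\mathbf{J}_s^{Ag}))=\int_0^t dW_s$; this is where nonvanishing of the diffusion coefficient is essential and where one must invoke the uniqueness of the strong solution (so that $\mathbf{J}^{Ag}$ is genuinely a measurable functional of the driving path) together with the integrability $\esp^\pr[e^{(\rho-2\lambda)\tau^*}\widehat{J_{\tau^*}^{Ag}}^2\un_{\{\tau^*<\infty\}}]<\infty$ and the membership of the optimal controls in $\mathcal{G}$ from Theorem~\ref{verification} to guarantee the integrands are in the appropriate $L^2$ spaces. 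I would also note that the completion of filtrations under $\pr$ makes these inclusions hold up to $\pr$-null sets, which is all that is needed.
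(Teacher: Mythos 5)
Your proof is correct and follows essentially the same route as the paper: both arguments invert the SDE \eqref{SDES} using the strict positivity of the diffusion coefficient $\sigma\frac{h'}{\varphi'}(a^*(\cdot))$ to obtain $\ff=\ff^{\mathbf{J}^{Ag}}$, then rewrite the dynamics of $\mathbf{J}^{Ag}$ with $X$ as the driving noise and invoke uniqueness of the strong solution to conclude $\ff^{\mathbf{J}^{Ag}}\subseteq\ff^X$, closing the chain of inclusions. Your additional remarks on the adaptedness bookkeeping and the role of strong uniqueness make explicit what the paper leaves implicit, but the substance is identical.
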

\begin{proof}
The filtrations $\ff^X$ and $\ff^{{\bf J}^{Ag}}$ are included in $\ff$. As ${\bf J}^{Ag}$ is solution of the SDE(\ref{SDES}) then, $\ff^{{\bf J}^{Ag}}\subset\ff.$
As  $\sigma>0$, the function $a^*$ is positive and using Assumption \ref{hyp} $\sigma\frac{h'}{\varphi'}(a^*({\bf J}^{Ag}_t))$ is positive. By SDE (\ref{SDES}), we have
{ \begin{eqnarray*}
dW_t=\frac{1}{\sigma\frac{h'}{\varphi'}(a^*({\bf J}^{Ag}_t))}\Bigg[d{\bf J}_t^{Ag}-\Bigg(\lambda {\bf J}_t^{Ag}-U(r^*({\bf J}_t^{Ag}))
+h(a^*({\bf J}_t^{Ag}))-\sigma\frac{h'}{\varphi'}(a^*({\bf J}^{Ag}_t)))\Bigg)dt\Bigg].
\end{eqnarray*}}
Therefore we have $\ff\subset\ff^{{\bf J}^{Ag}}$ and the filtrations generated by $W$ and ${\bf J}^{Ag}$ coincide ($\ff\equiv\ff^{{\bf J}^{Ag}}$).
By definition of the { output process}, we have
  $$dX_t=\varphi(a^*({\bf J}_t^{Ag})dt+\sigma dW_t,~X_0~\mbox{is given.}$$
 Since $\sigma>0,$ we obtain
\begin{equation}\label{SDEW}
dW_t=\frac{1}{\sigma}[dX_t-\varphi(a^*({\bf J}_t^{Ag}))dt].
\end{equation} 
Furthermore, we have
\begin{eqnarray}\label{VC}\nonumber
d{\bf J}_t^{Ag}&=&\lambda {\bf J}_t^{Ag} dt-\Bigg(U(r^*({\bf J}_t^{Ag}))-h(a^*({\bf J}_t^{Ag}))+\sigma\frac{h'}{\varphi'}(a^*({\bf J}^{Ag}_t)))\Bigg)dt\\
&+&\frac{h'}{\varphi'}(a^*({\bf J}_t^{Ag}))\Bigg[-\varphi(a^*({\bf J}_t^{Ag}))dt
+dX_t\Bigg],\\
{\bf J}_0^{Ag}&=&x.\nonumber
\end{eqnarray}
Then the process $X$ appears as the unique source of noise driving (\ref{VC}). From (\ref{SDES}) and (\ref{SDEW}), the SDE (\ref{VC}) admits a unique solution, then ${\bf J}^{Ag}$ is $\ff^X$-adapted, hence $\ff^{{\bf J}^{Ag}}\subset \ff^X$.  
Therefore the three filtrations $\ff,~\ff^X$ and ${\bf J}^{Ag}$  coincide at the optimum.
\end{proof}\pf

\begin{proposition}
Under the assumptions of Proposition \ref{filtracoinc}, the value functions $v$ and $v^{aux}$ coincide.
\end{proposition}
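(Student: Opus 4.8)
The plan is to establish the two inequalities $v(x)\leq v^{aux}(x)$ and $v^{aux}(x)\leq v(x)$ separately. The first one is immediate from the definitions: since $\ff^X\subseteq\ff$, every $\ff^X$-progressively measurable control is a fortiori $\ff$-progressively measurable, every $\ff^X$-stopping time is an $\ff$-stopping time, and the integrability constraints appearing in the definitions of $\mathcal{G}^X$ and $\mathcal{G}$ are literally the same. Hence $\mathcal{G}^X\subseteq\mathcal{G}$, and taking the supremum of the same functional over the larger admissible set can only increase its value, so $v(x)\leq v^{aux}(x)$ for every $x\geq 0$.

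For the reverse inequality I would combine the Verification Theorem \ref{verification} with Proposition \ref{filtracoinc}. The Verification Theorem produces an optimal control for the auxiliary problem \eqref{valuefunction} in feedback form: the rent $r^*(\widehat{J^{Ag}})$, the effort $a^*(\widehat{J^{Ag}})$, and the stopping time $\tau^*=\inf\{t\geq 0 : w(\widehat{J_t^{Ag}})\leq -U^{-1}(\widehat{J_t^{Ag}})\}$, where $\widehat{J^{Ag}}={\bf J}^{Ag}$ is the unique strong solution of the closed-loop SDE \eqref{SDES}, and $w=v^{aux}$. By construction $\widehat{J^{Ag}}$ is $\ff^{{\bf J}^{Ag}}$-adapted. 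Proposition \ref{filtracoinc} asserts that, under its standing assumptions, $\ff^{{\bf J}^{Ag}}=\ff^X=\ff$ at the optimum. Therefore $\widehat{J^{Ag}}$ is $\ff^X$-adapted, the feedback controls $r^*(\widehat{J^{Ag}})$ and $a^*(\widehat{J^{Ag}})$ are $\ff^X$-progressively measurable, and $\tau^*$, being the hitting time of a closed set by a continuous $\ff^X$-adapted process, is an $\ff^X$-stopping time. The integrability conditions needed for membership in $\mathcal{G}^X$ are exactly those already verified in the Verification Theorem to place this triple in $\mathcal{G}$. Hence $(r^*(\widehat{J^{Ag}}),\tau^*,a^*(\widehat{J^{Ag}}))\in\mathcal{G}^X$, and since this triple attains $v^{aux}(x)$ we obtain $v^{aux}(x)=J_0^P(\cdot)\leq v(x)$.

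Combining the two inequalities gives $v\equiv v^{aux}$. The only delicate point, and the step I expect to be the real content of the argument, is the measurability bookkeeping in the second paragraph: one must be sure that the filtration identification of Proposition \ref{filtracoinc} applies precisely to the candidate optimal controls furnished by the Verification Theorem, and in particular that $\tau^*$ is genuinely an $\ff^X$-stopping time rather than merely an $\ff$-stopping time. This is exactly where the coincidence $\ff=\ff^X$ is indispensable; once it is in hand, the admissibility and optimality of the controls transfer verbatim from the auxiliary problem to the original one.
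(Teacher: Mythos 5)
Your proposal is correct and follows essentially the same route as the paper: the inclusion $\mathcal{G}^X\subseteq\mathcal{G}$ gives $v\leq v^{aux}$, and the filtration coincidence of Proposition \ref{filtracoinc} makes the optimal feedback control from the Verification Theorem admissible for the original problem, giving the reverse inequality. Your version merely spells out the measurability bookkeeping (in particular for $\tau^*$) that the paper's proof leaves implicit.
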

\begin{proof}
We solved the auxiliary value function of the Principal defined by (\ref{valuefunction}) in the filtration $\ff$.  Since the filtration $\ff^X$ is included in $\ff$,  we have  $v\leq v^{aux}$.  Thanks to the Proposition \ref{filtracoinc}, we showed that the large filtration $\ff$ coincides with the filtration generated by the output process $\ff^X$ at the optimum. One conclude that $v = v^{aux}$ and  the initial problem of the Principal (\ref{valuefunctiono}) is solved.
\end{proof}\pf
\begin{remarque}
 Under the strong formulation approach, we  solved the original non-standard stochastic control problem under the assumptions that 
 the derivative of the marginal impact of the effort $\varphi$, the derivative of the cost of the effort $h$, and the diffusion term of the output process are positive. In this case, we obtain the same results as in the weak formulation approach.
 One advantage of the strong approach is to relax  the integrability conditions on the utility function, the marginal impact of the effort and the cost of the effort that are needed in the strong approach  due to the change of probability  from $\pr$ to $\pr^A.$ 
\end{remarque}

\section{Numerical study}  
For the numerical study, we choose the following functions for the first-best and second-best cases (see  Example \ref{exnum}):
\begin{itemize}
\item the impact of the effort on the output process : $\varphi(x)=3(1-\exp(-\alpha x)) ;\quad\alpha=0.1$
\item the cost of effort $h(x)=\exp(\beta x)-1; \quad \beta=0.1$
\item the Agent's utility  $U(x)=x^{\frac{1}{4}}$
\item The preference parameters for the Principal $\delta=0.08$ and  for the Agent $\lambda=0.2$. 
\end{itemize}
\subsection{First-best case}
In the first-best case,  the optimal rent is given by $R_t^*=(U')^{-1}(\frac{1}{\lambda_{Lag}}e^{(\lambda-\delta)t})$ and the optimal effort is given by  $A^*_t=(\frac{h'}{\varphi'})^{-1}(\frac{1}{\lambda_{Lag}}e^{(\lambda-\delta)t})\vee 0$, where $\lambda_{Lag}$ is solution of the equation
\begin{equation}\label{cas1}
\int_0^\infty e^{-{{{\lambda}} s}}
  \left(
  U\left((U')^{-1}(\frac{1}{\lambda_{Lag}}e^{(\lambda-\delta)s})\right)
  -h\left((\frac{h'}{\varphi'})^{-1}(\frac{1}{\lambda_{Lag}}e^{(\lambda-\delta)s} ) \vee 0\right)
  \right)ds=x.
  \end{equation}
To determine the domain $[0,x_{max}]$ on which  the Principal  proposes a contract to the Agent (that we call simply the continuation region by analogy with the second best-case),    we  compute first $\lambda_{Lag}(x)$ solution of (\ref{cas1}), then $H_s(x)=\varphi(A_s(x))-R_s(x),$ and   finally $\int_0^\infty e^{-\delta s}H_s(x)ds$. As $x \rightarrow H_s(x)$ is decreasing,  $x_{max}$ is the solution $\int_0^\infty e^{-\delta s}H_s(x)ds = 0$: if the Agent's reservation value is greater than $x_{max}$, then the Principal's value function would be negative if he proposes such a contract to the Agent.  We find that the continuation region is equal to {$[0, 5.45]$ (see Figure \ref{valuefirst2}).  \\
Figure \ref{LAGcas1} represents the variation of the Lagrange multiplier as a function of $x$. We recall that on  $[0,3\frac{\alpha}{\beta}]$, $\lambda_{Lag}$ is solution to 
$$\dfrac{c}{(pc\lambda_{Lag})^{\frac{p}{p-1}}(\lambda-\frac{p}{p-1}(\lambda-\delta))}+\frac{1}{\lambda}- \left(\frac{3\alpha}{\beta\lambda_{Lag}}\right)^{\frac{\beta}{\alpha+\beta}}\dfrac{\alpha+\beta}{\lambda\alpha+\beta\delta}= {x} $$
and on $[3\frac{\alpha}{\beta},\infty[, \lambda_{Lag}$ is solution to  
$$\dfrac{c}{(pc\lambda_{Lag})^{\frac{p}{p-1}}(\lambda-\frac{p}{p-1}(\lambda-\delta))}
+ \left( \frac{1}{\lambda}\left(\frac{3\alpha}{\beta \lambda_{Lag}}\right)^{\frac{\lambda}{\lambda-\delta}}
-\dfrac{\alpha+\beta}{\lambda\alpha+\beta\delta} 
\left(\frac{3\alpha}{\beta\lambda_{Lag}}   \right)^{\frac{\lambda}{\lambda-\delta}  } \right)  = x.$$
In this numerical study $\lambda_{Lag}=3\frac{\alpha}{\beta}=3$   corresponds to $x=1.64$.

\begin{figure}[H]
\begin{center}
\centering \includegraphics[width=10cm,height=8cm]{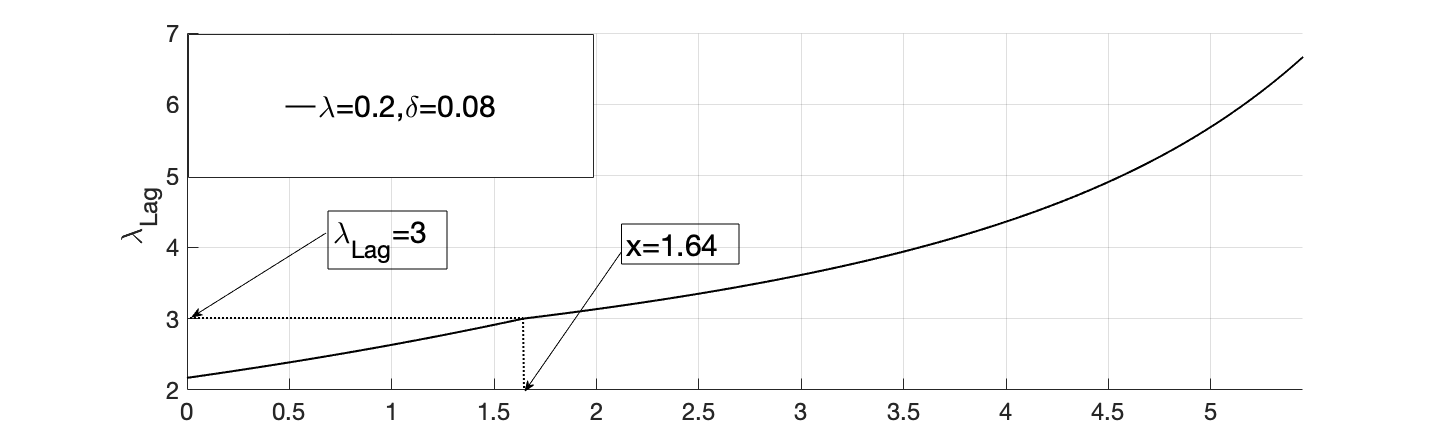}
\caption{The variation of the Lagrange multiplier as a function of $x.$ }\label{LAGcas1}
\end{center}
\end{figure}

\noindent Figure \ref{valuefirst2} represents the value function for the Principal. The discontinuity in the slope of the curve at $x=1.64$ corresponds to the $x$-value  for which $\lambda_{Lag} =  \frac{3\alpha}{\beta}=3.$
\begin{figure}[H]
\begin{center}
\centering \includegraphics[width=12cm,height=8cm]{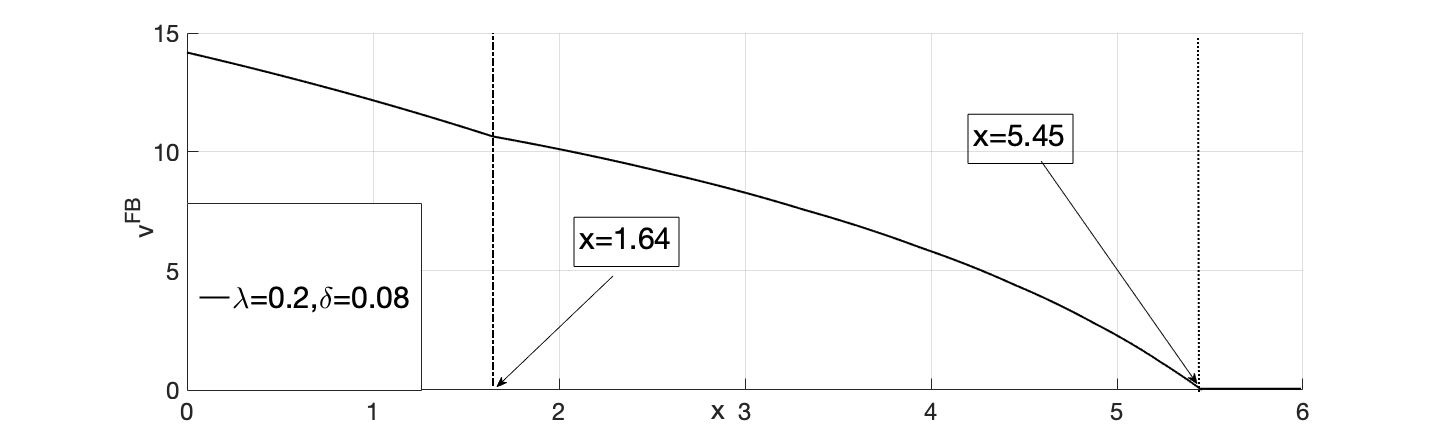}
\caption{ Value Function (first-best) in $[0,6]$. }\label{valuefirst2}
\end{center}
\end{figure}

\newpage
\noindent Figures \ref{renteffortfirstbis1} and \ref{renteffortfirstbis2} represent the optimal rent paid by the Principal, as a decreasing function of the effort provided by the Agent, for two different values of $t=0$ and $t=25$. \\
\begin{minipage}{0.4\textwidth}
    \begin{figure}[H]
        \includegraphics[width=8cm,height=7cm]{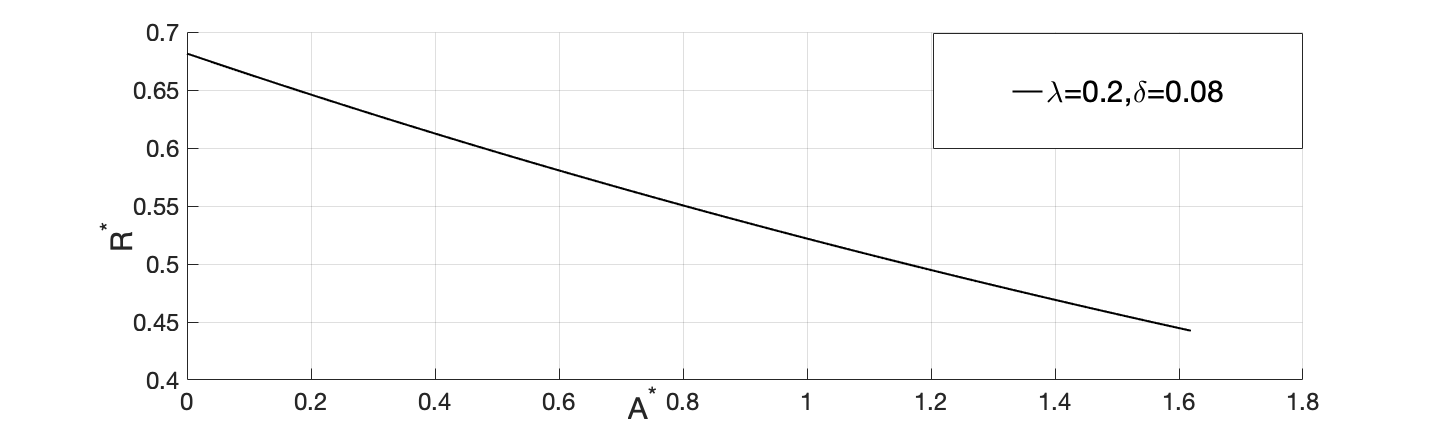}
      \caption{ Optimal rent  as a function of the effort  for $t=0$ (first-best case)} \label{renteffortfirstbis1}
    \end{figure}  
\end{minipage}
\hspace{2ex}
\begin{minipage}{0.4\textwidth}
    \begin{figure}[H]
        \includegraphics[width=8cm,height=7cm]{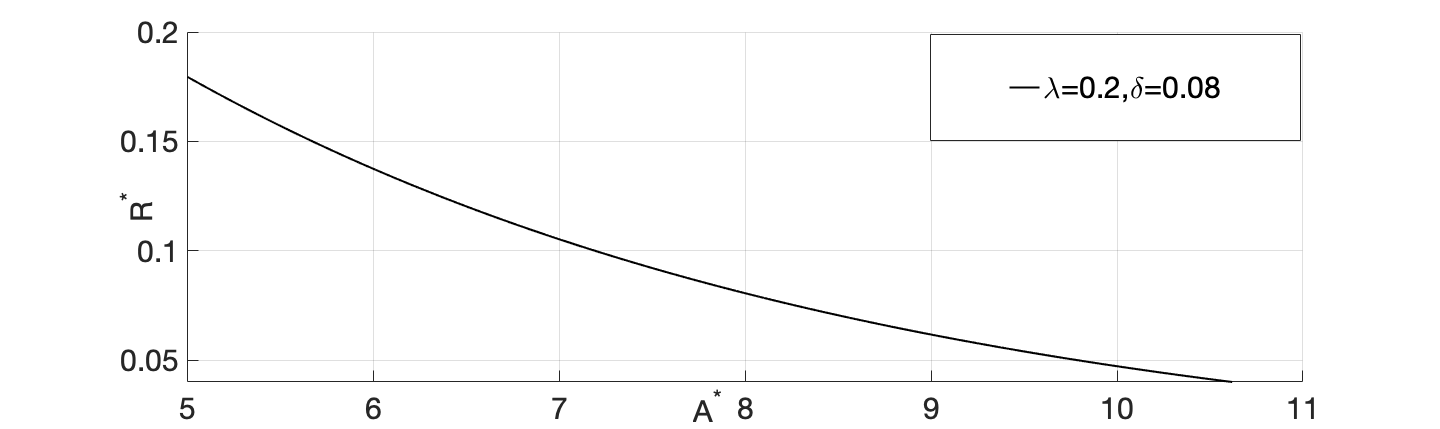}
      \caption{ Optimal rent  as a function of the effort  for $t=15$ (first-best case)}  \label{renteffortfirstbis2}
    \end{figure}  
\end{minipage}
\\

\noindent Figure \ref{rentcas1} (resp. Figure \ref{effortcas1}) represents the optimal rent (resp. optimal effort) as a function of $x$ an $t$.  The optimal rent is decreasing in $t$ (since the Agent is more impatient than the Principal) and increasing in $x$. Besides, the optimal effort  is increasing in $t$ and decreasing in $x$. 

\begin{minipage}{0.4\textwidth}
    \begin{figure}[H]
        \includegraphics[width=8cm,height=8cm]{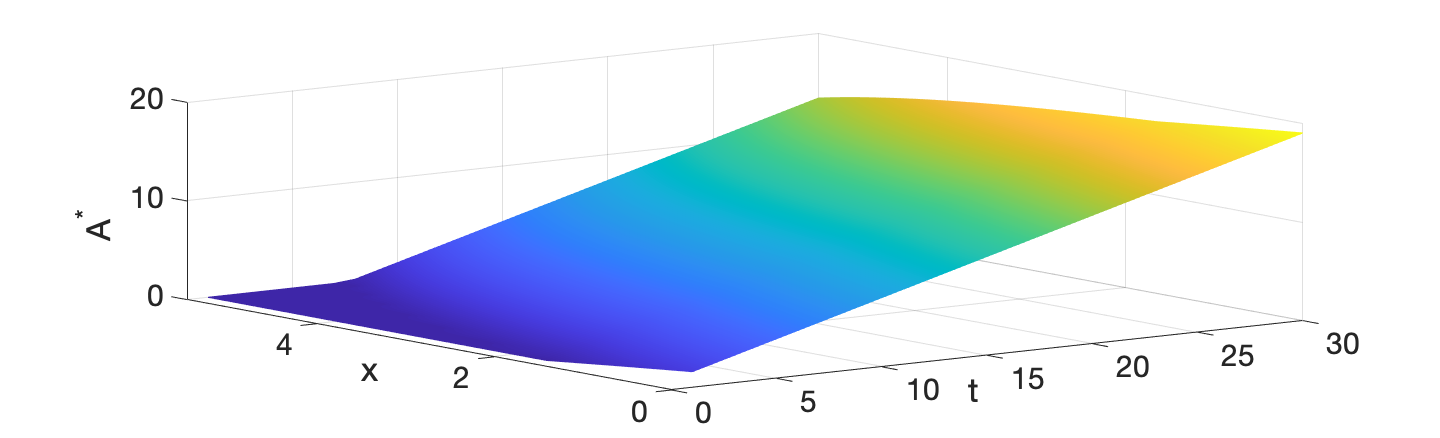}
      \caption{The optimal effort.}
    \end{figure}
\end{minipage}
\hspace{2ex}
\begin{minipage}{0.4\textwidth}
    \begin{figure}[H]
        \includegraphics[width=8cm,height=8cm]{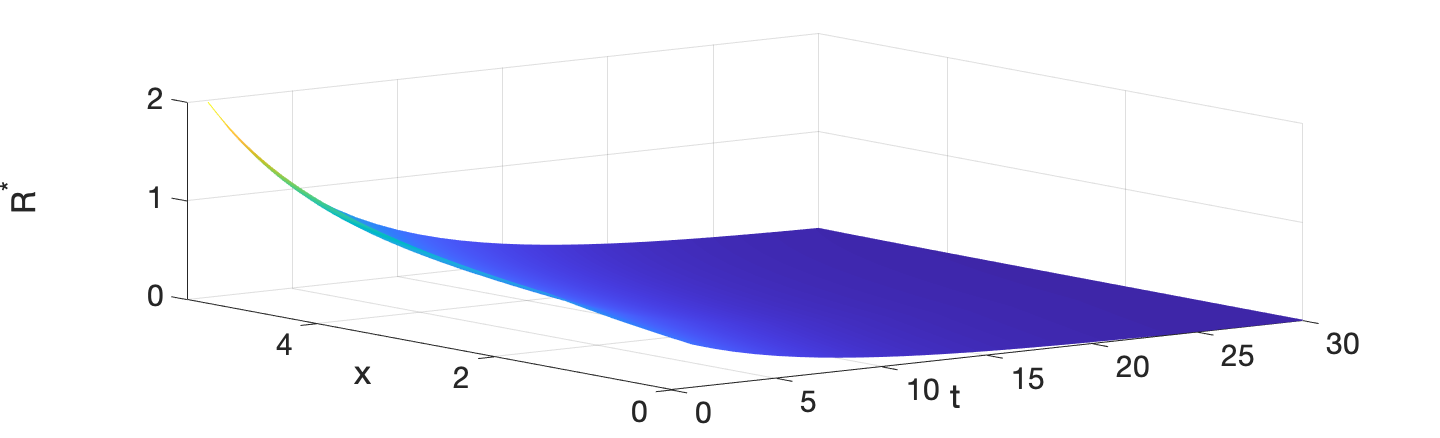}
      \caption{The optimal rent. }
    \end{figure}
\end{minipage}

\newpage
\noindent Figures \ref{valuefirst}, \ref{rentcas1}  and \ref{effortcas1} below provide a focus  of the value function and optimal effort/rent on  a smaller interval $x \in [0,0.95]$ to stay on a similar interval than in the second-best case (see the continuation region in the second-best case in Section \ref{sec:numerics2}).

\begin{figure}[H]
\begin{center}
\centering \includegraphics[width=12cm,height=8cm]{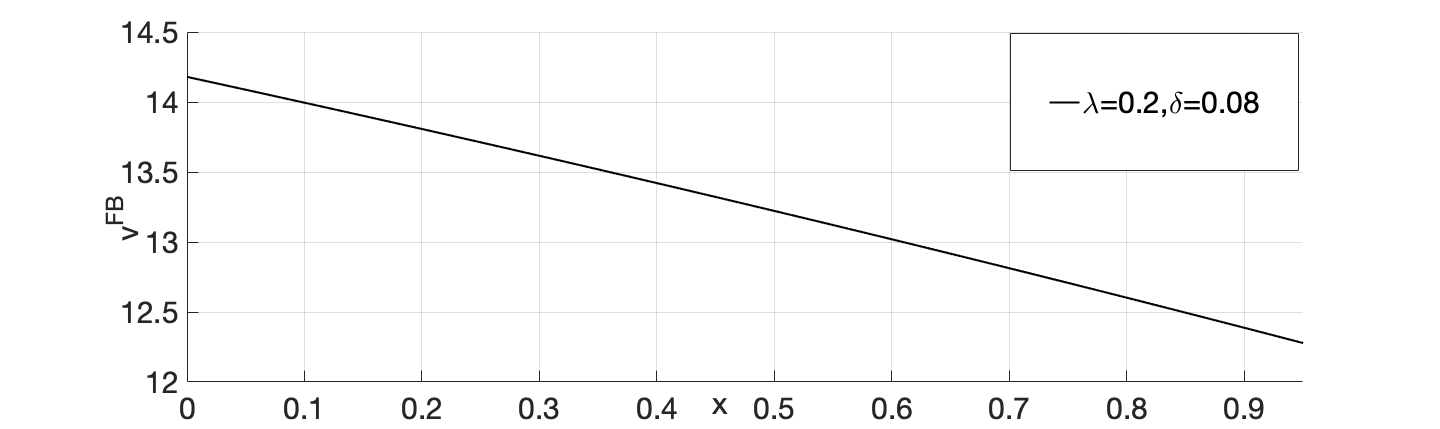}
\caption{ Value Function (first-best): zoom on $[0,0.95]$. }\label{valuefirst}
\end{center}
\end{figure}

\begin{minipage}{0.4\textwidth} 
    \begin{figure}[H]
        \includegraphics[width=8.5cm,height=8cm]{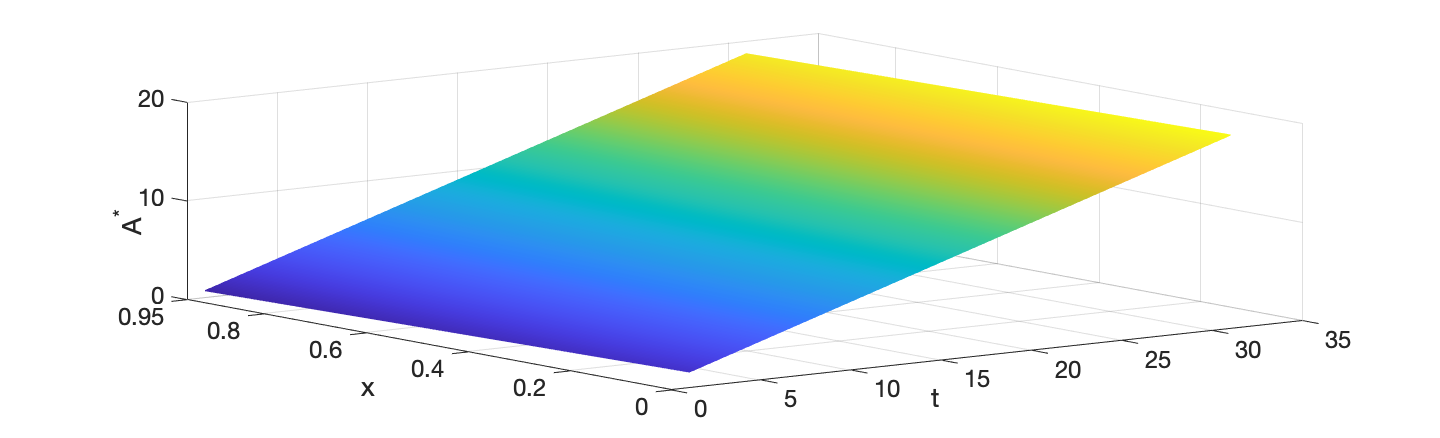}
      \caption{The optimal effort (first-best): zoom on $[0,0.95]$. }\label{effortcas1}
    \end{figure}
    \end{minipage}
    \hspace{2ex}
    \begin{minipage}{0.4\textwidth}
     \begin{figure}[H]
        \includegraphics[width=8.5cm,height=8cm]{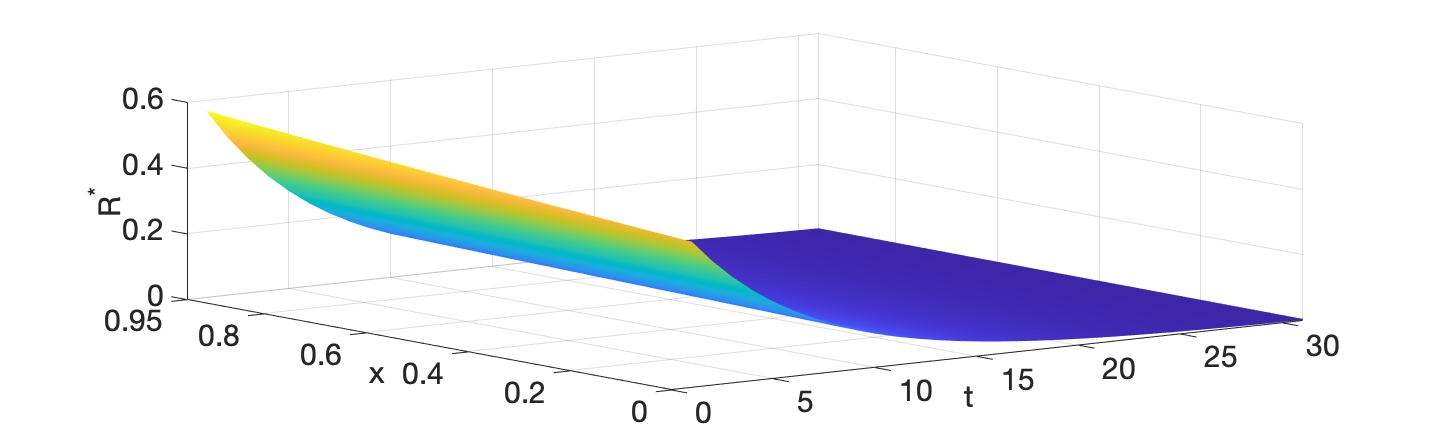}
      \caption{The optimal rent (first-best): zoom on $[0,0.95]$. }\label{rentcas1}
    \end{figure}
\end{minipage}

\hspace{2ex}

\subsection{Second-best case}\label{sec:numerics2}
We approximate numerically the solution of the HJBVI (\ref{IVHJB-3}) by using a policy iteration algorithm named Howard algorithm.
 The numerical approximation of the  solution of (\ref{IVHJB-3}) consists in three steps (for more details see  Hajjej et al.  \cite{hajjej2019optimal}):
\begin{enumerate}
\item Reduction to a bounded domain.  We have to replace    $[0,\infty)$  by a bounded domain $[0,\overline{x}]$.   Since the behavior of the HJB solution at $\infty$  is known, $v(x)=-U^{-1}(x)$ for $x$ large enough, and we take this   boundary condition. The  choice  of the boundary $\overline{x}$ is empirical 	and the robustness is studied by varying $\overline{x}$.
 \item We use  finite difference approximations to discretize the variational inequality (\ref{IVHJB-3}).
 \item  We use Howard algorithm (see Howard \cite{howard1960dynamic}) to solve the discrete equation.
 \end{enumerate}
  In this section we use the same functions $\varphi, h$ and $ U$ as in the first-best case. We first take the following value for the  volatility  $\sigma=1.85$. We observe in Figures \ref{LAG1} and \ref{LAG2} that the value function is concave,  in accordance with Sannikov \cite{sannikov2008continuous}. The continuation region is (0, 0.32) on which the value function is strictly concave, then it is equal to $-U^{-1}(x)$.  \\
 
\begin{minipage}{0.4\textwidth}
    \begin{figure}[H]
        \includegraphics[width=8cm,height=8cm]{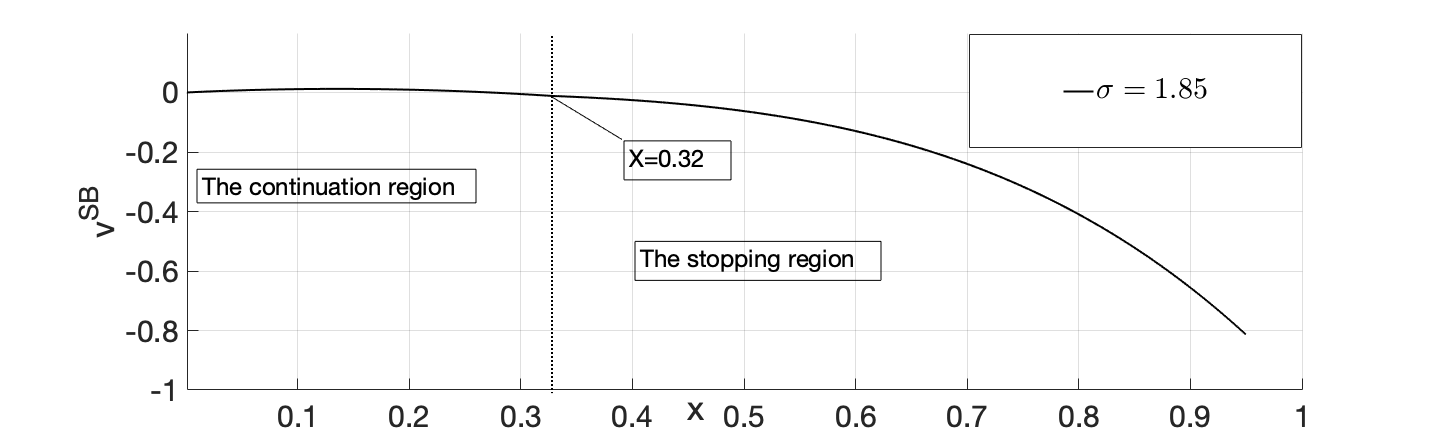}
\caption{Value function (second-best) in $[0,\Bar{x}]$ }\label{LAG1}
    \end{figure}
\end{minipage}
\hspace{2ex} 
\begin{minipage}{0.4\textwidth}
    \begin{figure}[H]
        \includegraphics[width=8cm,height=8cm]{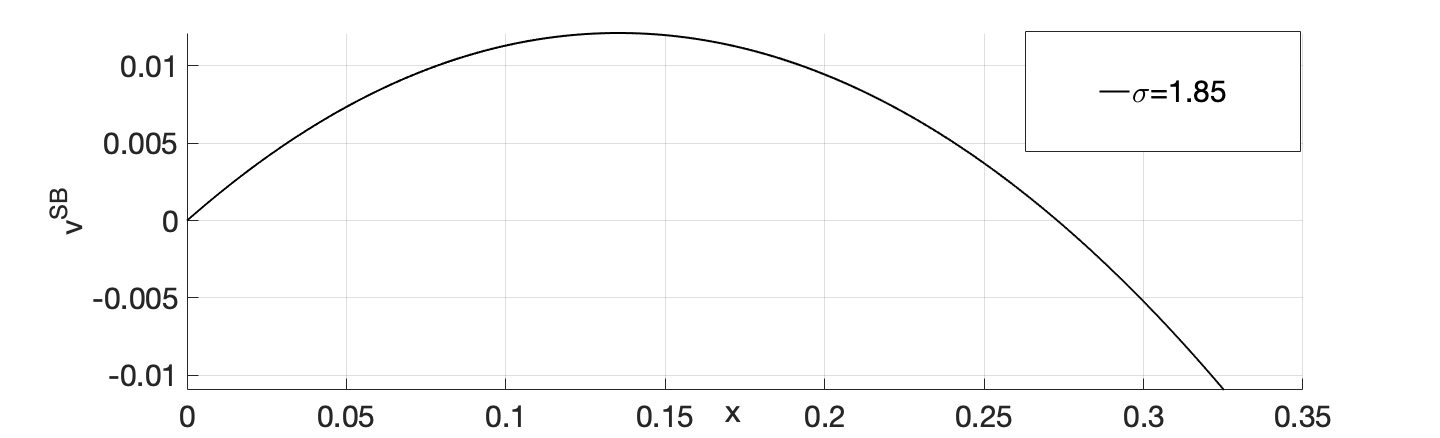}
\caption{Value function (second-best) on the continuation region}\label{LAG2}
    \end{figure}
\end{minipage}

\newpage
\noindent Figure \ref{sigma} computes the value function for different value of $\sigma$. The higher $\sigma$, the smaller the value function for the Principal. 
These numerical results are in accordance with the ones obtained in \cite{hajjej2019optimal} established in a weak approach. 

    \begin{figure}[H]
    \begin{center}
\centering 
         \includegraphics[width=9cm,height=8cm]{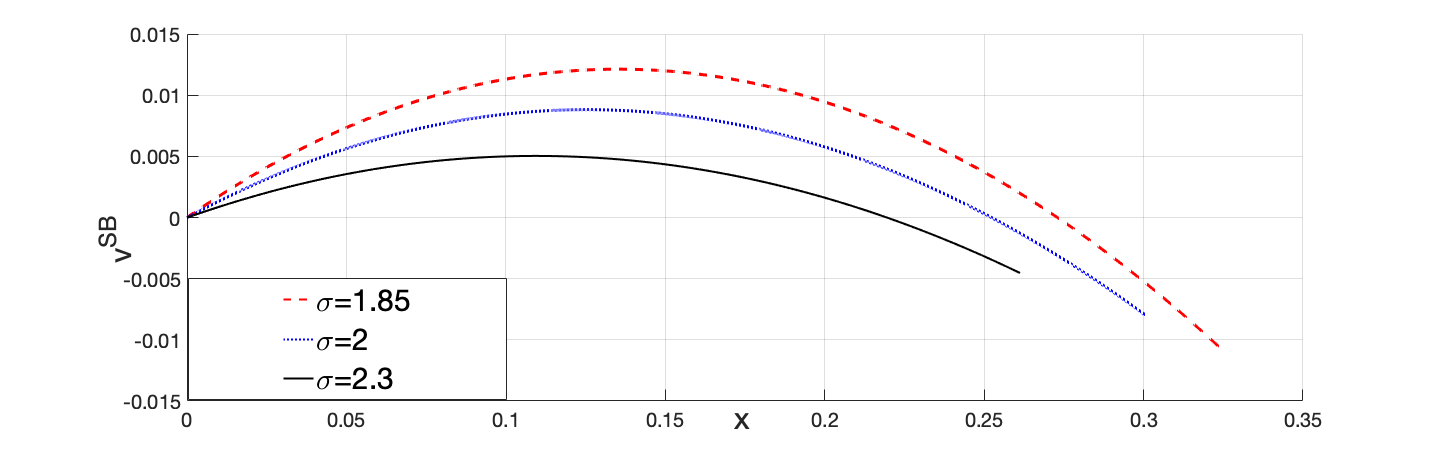}
\caption{Value function for different $\sigma$ in the second-best case}\label{sigma}
       \end{center}
    \end{figure}

\subsection{The value of the information}
Let us first compare the optimal effort (Figures \ref{effortfirst}-\ref{effortfirst15}-\ref{effortsecond}) and the optimal rent  (Figures \ref{rentfirst}-\ref{rentfirst15}-\ref{rentsecond}) in the first-best  and second-best cases.  For both cases, the higher is $x$ the reservation value for the Agent, the fewer effort the Agent will provide, and the higher the rent he will receive. Roughly speaking, when the Agent is richer, he is less motivated to provide  effort, and the Principal should pay him more to prompt him to give more effort. If we focus on the first-best case, 
Figures \ref{effortfirst} and \ref{rentfirst} correspond to the optimal effort and optimal rent  at time $t=0$: since the Agent is more impatient than the Public, the Principal agrees  to pay  a higher rent and to receive a lower effort from the Agent at the beginning of the contract, compared to second-best case. As shown in the previous Figures  \ref{effortcas1} and \ref{rentcas1}, as well as in Figures \ref{effortfirst15} and \ref{rentfirst15},   the rent will then rapidly decrease in time,   and the effort will increase. In addition the contract in the first-best case is perpetual, while it may stop at a stopping time in the second best case, such that the first-best case is ultimately  much more favorable for the Principal than the second-best case, as shown in Figure \ref{valuefinformation1}.  

\begin{minipage}{0.4\textwidth}
    \begin{figure}[H]
        \includegraphics[width=8cm,height=8cm]{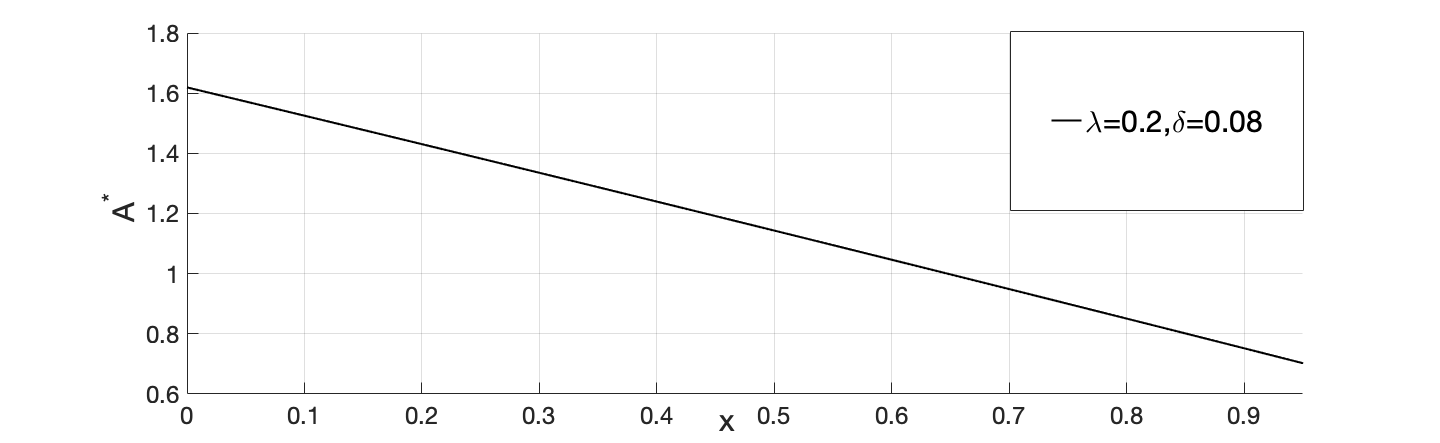}
       \caption{  Optimal effort (first-best) for $t=0$.}\label{effortfirst}
    \end{figure}
\end{minipage}
\hspace{2ex} 
\begin{minipage}{0.4\textwidth}
     \begin{figure}[H]
        \includegraphics[width=8cm,height=8cm]{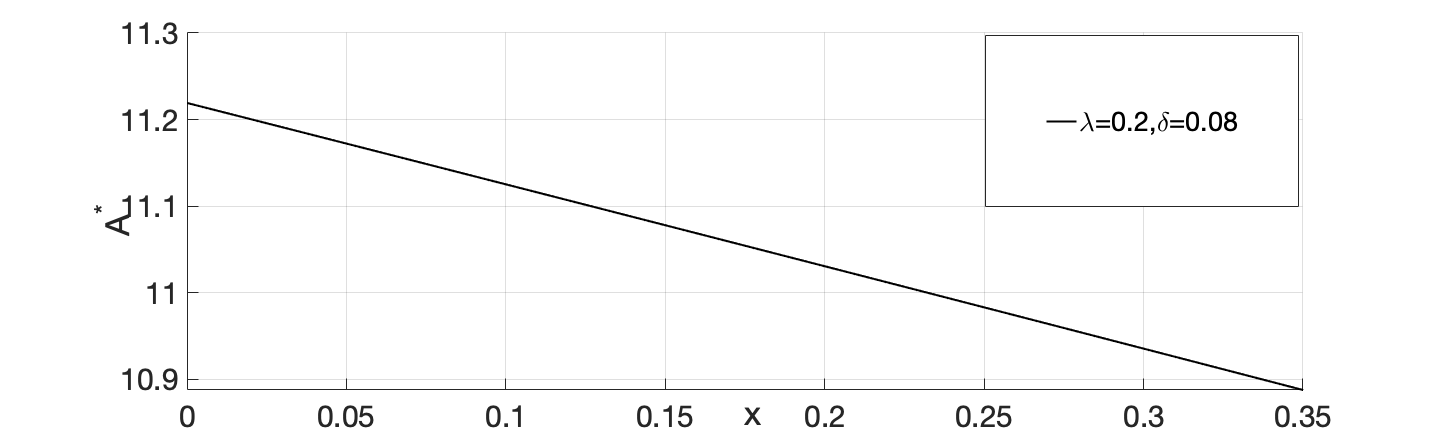}
       \caption{ Optimal effort (first-best) for $t=15$.}\label{effortfirst15}
    \end{figure}
\end{minipage}

    \begin{figure}[H]
    \begin{center}
        \includegraphics[width=8cm,height=8cm]{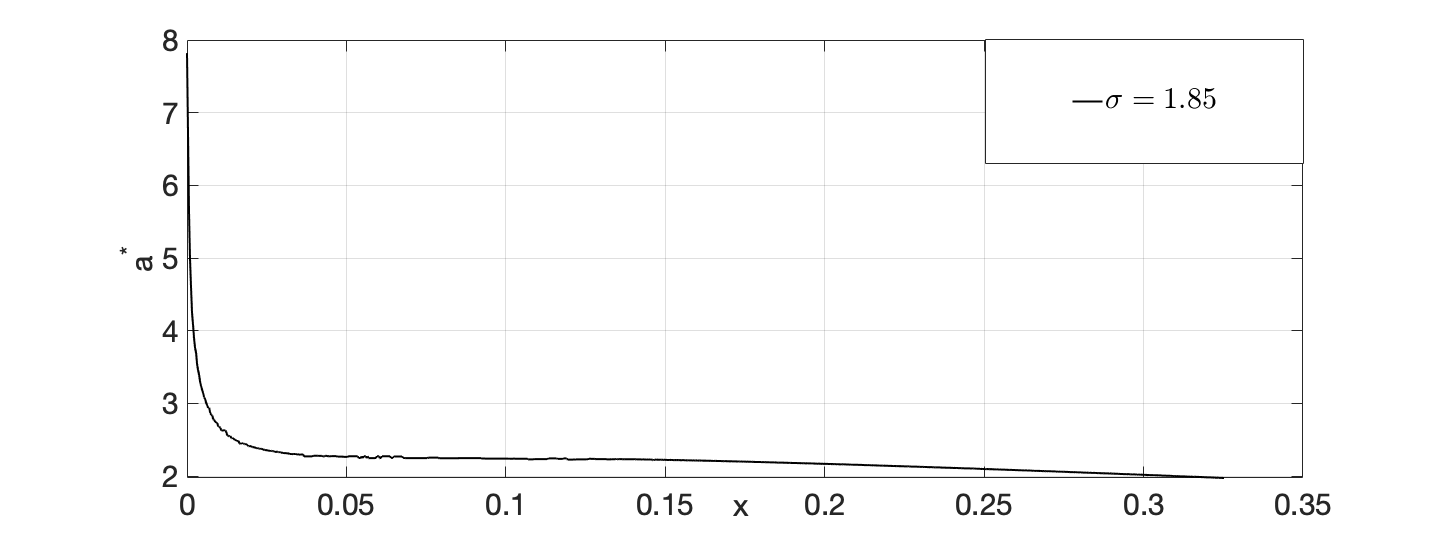}
       \caption{Optimal effort (second-best) }\label{effortsecond}
        \end{center}
    \end{figure}

\begin{minipage}{0.4\textwidth}
    \begin{figure}[H]
        \includegraphics[width=8cm,height=8cm]{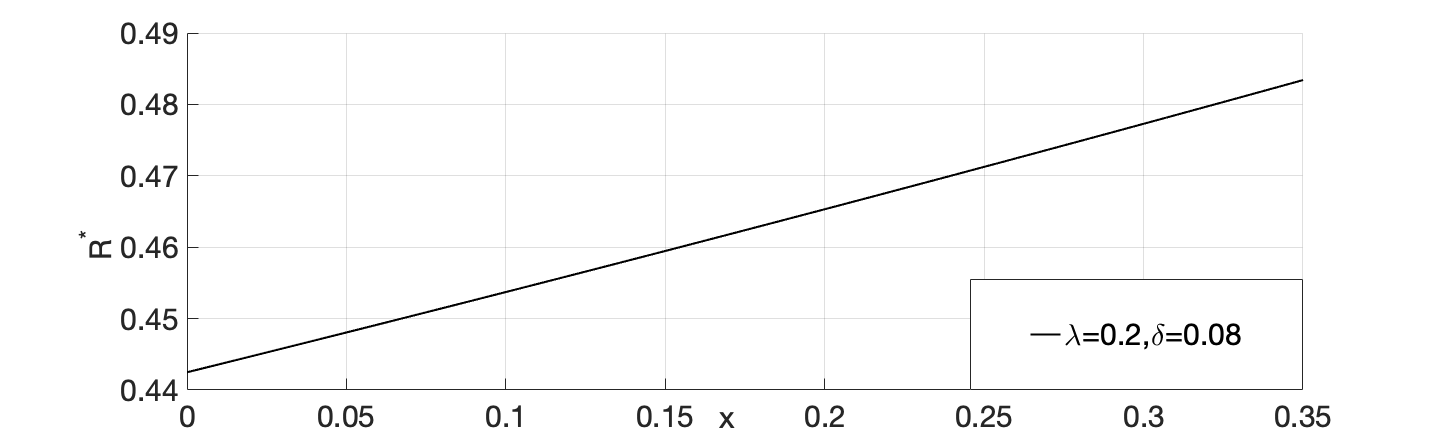}
       \caption{ Optimal rent(first-best) for $t=0$. }\label{rentfirst}
    \end{figure}
\end{minipage}
\hspace{2ex} 
\begin{minipage}{0.4\textwidth}
    \begin{figure}[H]
        \includegraphics[width=8cm,height=8cm]{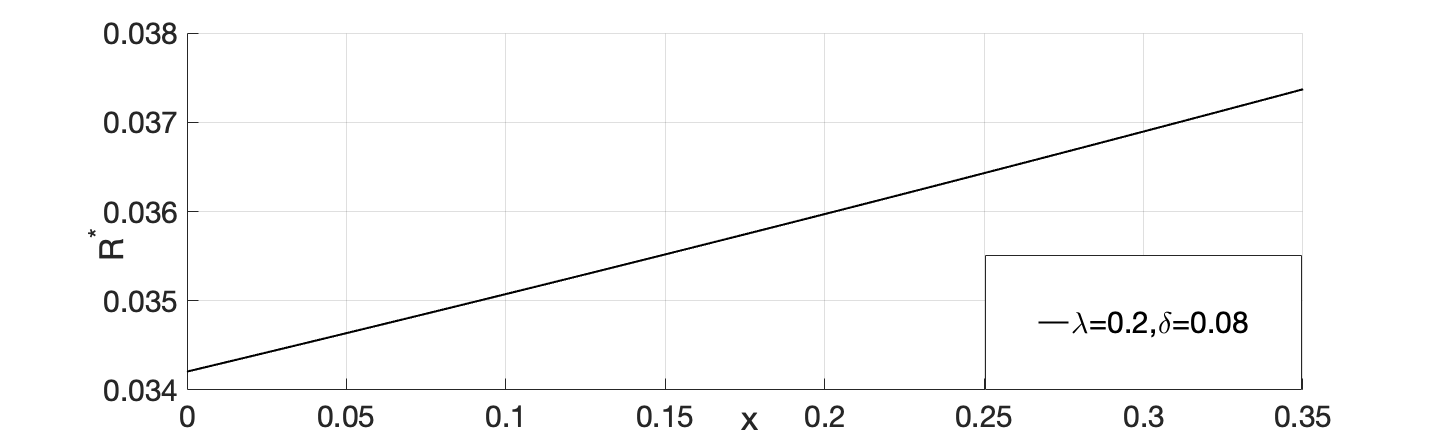}
       \caption{ Optimal rent(first-best) for $t=15$. }\label{rentfirst15}
    \end{figure}
\end{minipage}

    \begin{figure}[H]
    \begin{center}
        \includegraphics[width=9cm,height=8cm]{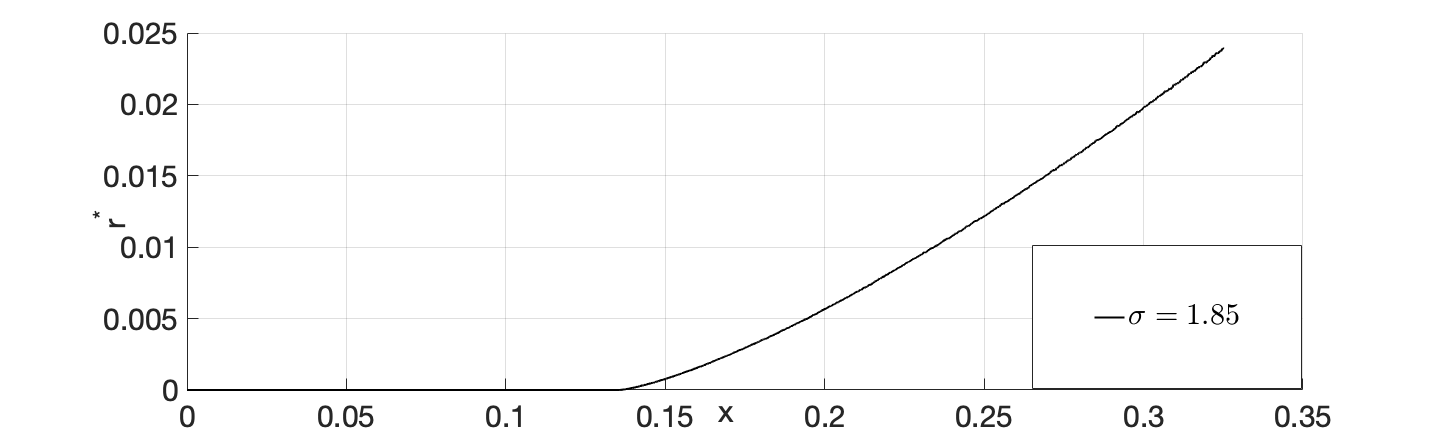}
       \caption{Optimal rent (second-best) }\label{rentsecond}
       \end{center}
    \end{figure}

\newpage

\noindent Let us now compare in Figure \ref{valuefinformation1}   the Principal  value function in the first-best and in the second-best cases. As the Principal is risk-neutral, 
the difference represents the "value of information", computed in  Figure \ref{valueofinformation}. 
The value function for the Principal in the first-best  case is greater than the value function for the Principal in the second-best case.  The 
 continuation region  in  the first-best case ($[0,5.45]$) is much larger than in the second-best ($[0,0.32]$), thanks to the risk sharing between the  Principal and Agent.   The value function in the first-best {case} is equal to zero in the stopping region ({$\tau=0$ is optimal}), but in second-best {case} the value function in the stopping region is negative equal to $-U^{-1}(x)$.  We focus below on the region $x\in [0,0.95]$.  The value of the information is a convex function of $x$. This, together with Figure \ref{sigma}, shows  the higher the risk, the more  the value of the  information.

\begin{minipage}{0.4\textwidth}
    \begin{figure}[H]
        \includegraphics[width=8cm,height=8cm]{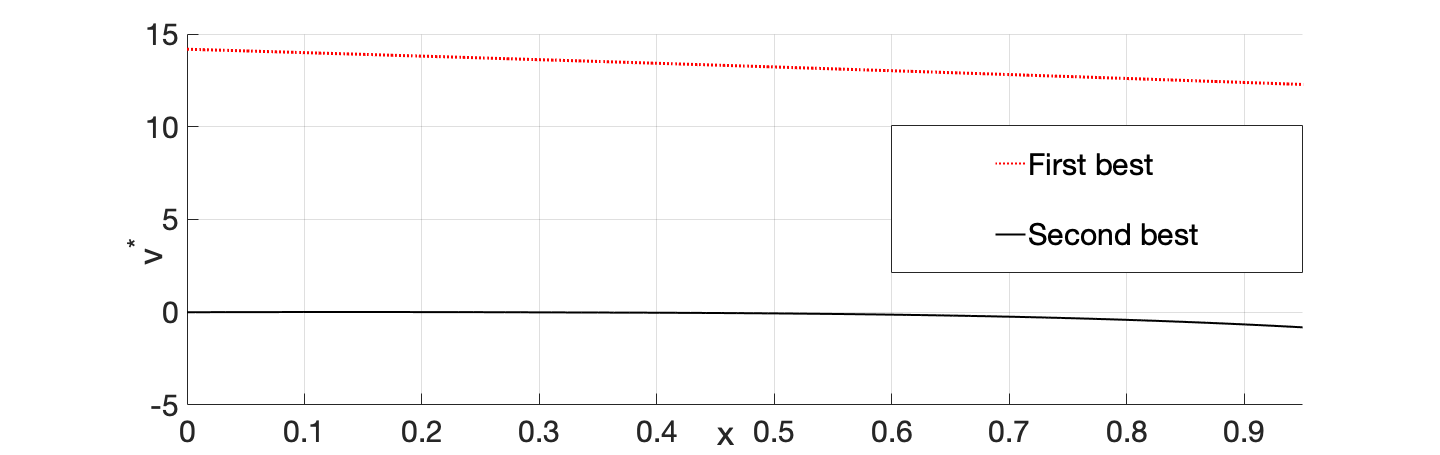}
      \caption{Value Function in the first-best and in the second-best. }\label{valuefinformation1}
    \end{figure}
\end{minipage}
\hspace{2ex} 
\begin{minipage}{0.4\textwidth}
    \begin{figure}[H]
        \includegraphics[width=9cm,height=8cm]{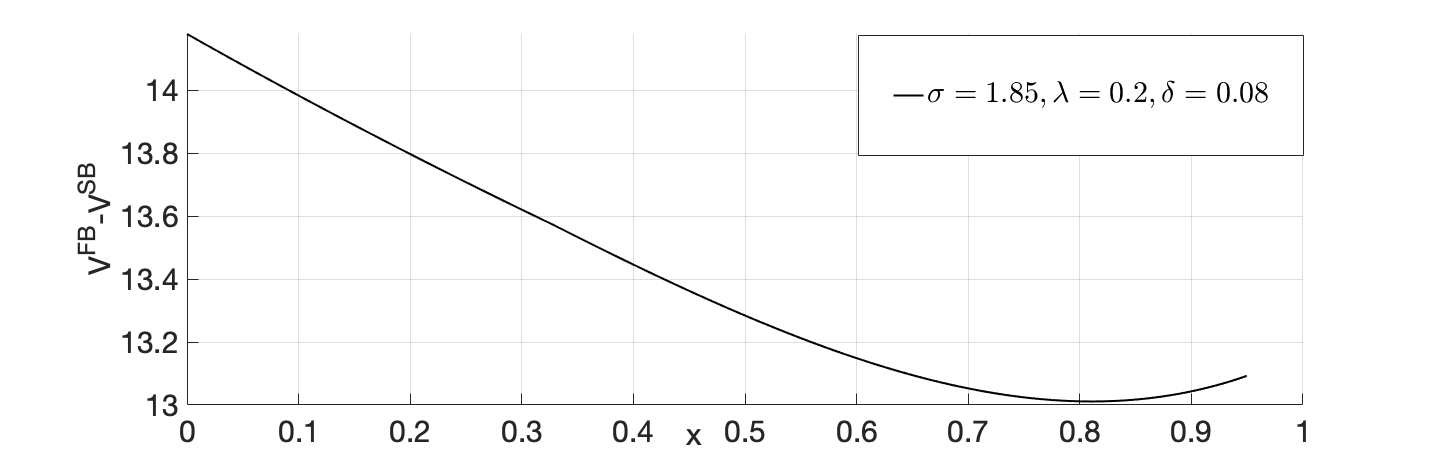}
      \caption{The value of information for $\sigma=1.85$. }\label{valueofinformation}
    \end{figure}
\end{minipage}

\section{Appendix}
{\bf Proof of Theorem   \ref {verification}}

  $[(a)]$
  {\underline {First step: We prove
      $v^{aux}\geq w$ on $\R^+$}}. Let $x\geq0$,  $n\in\N,$  and  an admissible contract $(R,\tau,A^*(Z))\in\mathcal G$. If $x=0$, then from assumption(i), we have $v(0)=w(0)=0$.
  We assume that $0<x$.
  We introduce the following  $\ff$-stopping time:
  $$\tau_n~:=~ \tau\wedge \inf\{t: |w'(J_t^{Ag}(x,R,\tau,A^*(Z))){ \sigma\frac{h'(A^*(Z_t))}{\varphi'(A^*(Z_t))}}	| > n\}.$$
{{From (i)-(ii),
 $w$ is continuous on $\R_+$, $w\in C^2([0,\hat b))$ and  $w=-U^{-1}\in C^2([\hat b,\infty))$, then $w$ is continuous and piecewise $C^2$ on $\R_+$.
Applying the generalized It\^o's formula (see Krylov \cite{krylov2008controlled}, Theorem 2, p. 124) between time $0$ en $\tau_n$ to the process 
$\left(e^{-\delta t}w(J_t^{Ag}(x,R,\tau,A^*(Z)))\right)_{t\geq 0}$ {\footnotesize 
\begin{eqnarray*}
w(x)&=&e^{-\delta\tau_n}w(J_{\tau_n}^{Ag}(x,R,\tau,A^*(Z)))-\int_0^{\tau_n}e^{-\delta s}[-\delta w(J_s^{Ag}(x,R,\tau,A^*(Z)))\\
  &+&{\cal{L}}^{A^*(Z_s),R_s}w(J_{s}^{Ag}(x,R,\tau,A^*(Z)))]ds+\int_0^{\tau_n}e^{-\delta s}w'(J_s^{Ag}(x,R,\tau,A^*(Z)))
\sigma\frac{h'(A^*(Z_s))}{\varphi'(A^*(Z_s))}{\un_{\{A^*(Z_s)>0\}}}dW_s.
\end{eqnarray*}}
Taking the expectation, we obtain:
\begin{eqnarray}\label{Ito2-4}
w(x)&=&\esp\Bigg[e^{-\delta \tau_n}w(J_{\tau_n}^{Ag}(x,R,\tau,A^*(Z)))-\int_0^{\tau_n}e^{-\delta s}[-\delta w(J_{s}^{Ag}(x,R,\tau,A^*(Z)))\nonumber \\
&+& {\cal{L}}^{A^*(Z_s),R_s}w(J_{s}^{Ag}(x,R,\tau,A^*(Z)))]ds\Bigg]\nonumber\\ 
&\geq& \esp\left[e^{-\delta\tau_n}w(J_{\tau_n}^{Ag}(x,R,\tau,A^*(Z)))+ \int_0^{\tau_n}e^{-\delta s}(\varphi(A^*(Z_s))-R_s)ds\right],
\end{eqnarray}
where the  inequality is obtained by using  assumptions(iii)-(iv).\\ 
By using the definition of {the} set $\mathcal{D}^{Ag}_\rho$, and Cauchy Schwarz inequality,  we have
\begin{eqnarray*}
  \sup_{n\in\N}\esp\big[\int_0^{\tau_n}e^{-\delta s}(\varphi(A^*(Z_s))-R_s)ds\big]
  \leq \frac{1}{\delta}\esp\big[\int_0^{\infty}e^{-\delta s}|\varphi(A^*(Z_s))|^2+|R_s|^2ds\big]<\infty.
\end{eqnarray*}
Therefore, $\Sup_{n\in\N}\esp\big[\int_0^{\tau_n}e^{-\delta s}(\varphi(A^*(Z_s))-R_s)ds\big]<\infty$, so we have $(\int_0^{\tau_n}e^{-\delta s}\varphi(A^*(Z_s)-R_s)ds)_{n\in\N}$ is uniformly integrable. Thus, we have the convergence in $\mathbb{L}^1$ and we may pass to the limit as  $n\rightarrow\infty$, and we get
\begin{equation}\label{inq1-3}\Lim_{n\rightarrow\infty}\esp[\int_0^{\tau_n}e^{-\delta
 s}(\varphi(A^*(Z_s))-R_s)ds]=\esp[\int_0^{\tau}e^{-\delta
 s}(\varphi(A^*(Z_s))-R_s)ds].\end{equation}
As $w$ satisfies the growth condition (\ref{eqcroissance}), we have
\begin{eqnarray*}
\esp\left[|e^{-\delta\tau_n}w(J_{\tau_n}^{Ag}(x,R,\tau,A^*(Z)))|\right]&\leq&\esp\left[|e^{-\delta\tau_n}K(1+U^{-1}(J^{Ag}_{\tau_n}(x,R,\tau,A^*(Z))))|\right]\\
\end{eqnarray*}
Using (\ref{hyp3}), \quad
$\sup_{n\in\N}\esp\left[|e^{-\delta\tau_n}w (J_{\tau_n}^{Ag}(x,R,\tau,A^*(Z)))|\right]
<\infty$\quad
and we may pass to the limit as $n\rightarrow\infty$, and we get
\begin{equation}\label{inq2-3}
\Lim_{n\rightarrow\infty}\esp[e^{-\delta\tau_n}w(J_{\tau_n}^{Ag}(x,R,\tau,A^*(Z)))]=E[e^{-\delta\tau}w(J_{\tau}^{Ag}(x,R,\tau,A^*(Z)))].\end{equation}
By (\ref{Ito2-4}), (\ref{inq1-3}) and (\ref{inq2-3}), we have
$$w(x)\geq \esp\left[ \int_0^{\tau}e^{-\delta s}(\varphi(A^*(Z_s))-R_s)ds+e^{-\delta\tau}w(J_{\tau}^{Ag}(x,R,\tau,A^*(Z)))\right].$$
From assumption (ii),  $w(J_{\tau}^{Ag}(x,R,\tau,A^*(Z)))\geq -U^{-1}(J_\tau^{Ag}(x,R,\tau,A^*(Z)))$, and  we deduce 
$$w(x)\geq \esp\left[ \int_0^{\tau}e^{-\delta s}(\varphi(A^*(Z_s))-R_s)ds-e^{-\delta\tau}J_{\tau}^{Ag}(x,R,\tau,A^*(Z))\right].$$
By taking the supremum, we obtain
$$w(x)\geq \Sup_{(R,\tau,A^*(Z))\in{\cal{G}}}\esp\left[ \int_0^{\tau}e^{-\delta s}(\varphi(A^*(Z_s))-R_s)ds-e^{-\delta\tau}J_{\tau}^{Ag}(x,R,\tau,A^*(Z))\right]=v^{aux}(x).$$

\noindent  {\underline {Second step: We prove $v^{aux}\leq w$ on $\R^+$}} If $x\geq \hat b$, then from assumption (ii) and the definition of the value function (\ref{valuefunction}),
we have $v^{aux}(x)\geq w(x)$. \\From now, we assume that $0< x < \hat b$. We now consider the feedback control $( r^*(\widehat{ J^{Ag}}) ,\tau^*, a^*(\widehat{ J^{Ag}}) )$ which is assumed to be in ${\cal{G}}$. Let $\tau^*$ be the stopping time introduced in (\ref{tauverification}). Then $\tau^*\in {\cal{T}}.$ We introduce the following $\ff$-stopping time:
$$\tau_n~:=~\tau^*\wedge \inf\{t:{ |w'(\widehat{ J_t^{Ag})}}{ \sigma\frac{h'(a^*(\widehat{ J_t^{Ag}})}{\varphi'(a^*(\widehat{ J_t^{Ag}})}})|> n\}.$$
Observe that  $w(\widehat{ J_t^{Ag}})>-U^{-1}(\widehat{ J_t^{Ag}})$ on $\lbr0,\tau_n{\lbr}\subset\lbr0,\tau^*\lbr$. 
Then on $\lbr0,\tau_n\lbr$, by (\ref{IVHJB-3})
$$\delta w({\widehat J^{Ag}}_t)- [{\cal L}^{a^*({\widehat J^{Ag}}_t),r^*({\widehat J^{Ag}}_t)} w({\widehat J^{Ag}}_t)+\varphi(a^*({\widehat J^{Ag}}_t))-r^*({\widehat J^{Ag}}_t)]=0.$$
Therefore
\begin{eqnarray*}
w(x)&=&\esp \left[e^{-\delta \tau_n}w({\widehat J}_{\tau_n}^{Ag})-\int_0^{\tau_n}e^{-\delta s}\big(-\delta w({{\widehat J^{Ag}}}_s)+{\cal{L}}^{a^*({ {\widehat J^{Ag}}}_s),r^*({\widehat J^{Ag}}_s)}w({{\widehat J^{Ag}}}_s)\big)ds\right]\\
&=&\esp\left[e^{-\delta \tau_n}w({\widehat J^{Ag}}_{\tau_n})+\int_0^{\tau_n}e^{-\delta s}(\varphi(a^*({\widehat J^{Ag}}_s))-r^*({\widehat J^{Ag}}_s))ds\right].\\
\end{eqnarray*}
Similarly to the first step, we show that {$(\int_0^{\tau_n} e^{-\delta s}(\varphi(a^*({\widehat J^{Ag}}_s))-r^*({\widehat J^{Ag}}_s))ds)_{n}$}  and {$\left(w(\widehat{ J^{Ag}_{\tau_n}})\right)_{n}$} are uniformly integrable. We may pass to the limit
$n\rightarrow\infty$,  $\tau_n \rightarrow \tau^*$ a.s,  and since $w({\widehat J^{Ag}}_{\tau^*})=-{\widehat J^{Ag}}_{\tau^*}$, we obtain 
\begin{eqnarray*}
w(x)&=&\esp\left[\int_0^{\tau^*} e^{-\delta s}(\varphi(a^*({\widehat J^{Ag}}_s))-r^*({\widehat J^{Ag}}_s))ds-e^{-\delta \tau^*}{\widehat J^{Ag}}_{\tau^*}\right]\\
&=&J^P_0( r^*({\widehat J^{Ag}}),\tau^*, a^*({\widehat J^{Ag}}             ) )\\
&{\leq}& v^{aux}(x).
\end{eqnarray*}
We conclude that $w =  v^{aux}$ on $\R^+$ and $(r^*({\widehat J^{Ag}}),\tau^*, a^*({\widehat J^{Ag}}  ))$ is an optimal feedback control.\\
$[(b)]$ For $x\in (0,\hat b)$, we maximize the function 
$$ f(x,.) :r\mapsto -w'(x)U(r)-r.$$
When $w'(x)\geq 0$,  the function $f(x,.)$ is non-increasing and the optimum is achieved for  $r=0.$\\
Otherwise, the function $f(x,.)$ is concave and the optimal rent is given by
 $r^*(x)=\argmax_{r} (f(x,r)).$ Therefore
$r^*(x)=(U')^{-1}(\frac{-1}{w'(x)})\un_{w'(x)<0}.$

\newcommand{\etalchar}[1]{$^{#1}$}

\end{document}